\documentclass{tac}

\usepackage{graphicx}
\usepackage{amsmath,amssymb,url}
\usepackage{tikz}
\usepackage{tikz-cd}
\usetikzlibrary{arrows,calc,decorations,decorations.markings,fadings,positioning,patterns,shapes
	,decorations.pathmorphing,backgrounds,fit,petri,mindmap,shadows,positioning,matrix}
\usepackage{multirow,tabularx,makecell}
\usepackage [all]{xy}
\usepackage{pigpen}
\newcommand{\xyline}[2][]{\ensuremath{\smash{\xymatrix@1#1{#2}}}}
\makeatother
\newdir{ >}{{}*!/-7.5pt/@{>}}
\makeatletter
\xyoption{2cell}
\UseAllTwocells


\newcommand{\po}{\ar@{}[dr]|{\text{\pigpenfont R}}}
\newcommand{\pb}{\ar@{}[dr]|{\text{\pigpenfont J}}}

\def\Cal#1{{\cal#1}}

\setlength{\topmargin}{0pt}
\setlength{\textheight}{8in}
\setlength{\textwidth}{6in}
\setlength{\oddsidemargin}{.25in}
\setlength{\evensidemargin}{0pt}
\setlength{\baselineskip}{1ex}

\newtheorem{theorem}{Theorem}
\newtheorem{definition}[theorem]{Definition}
\newtheorem{proposition}[theorem]{Proposition}
\newtheorem{lemma}[theorem]{lemma}

\newcommand{\wquot}{/\!\!/}

\newcommand{\ra}{\rightarrow}

\tikzset{->-/.style={decoration={
			markings,
			mark=at position .5 with {\arrow{>}}},postaction={decorate}}}

\address{Department of Electrical Engineering,\\
	Indian Institute of Technology Delhi, \\
	New Delhi-110016, INDIA.\\[5pt]
	Department of Electrical Engineering,\\
	Indian Institute of Technology Delhi, \\
	New Delhi-110016, INDIA.\\
}
\keywords{functor, base structured categories, pull-back, cartesian lift, $L^2$ functor, functorial signal-spaces}
\eaddress{salil.samant@ee.iitd.ac.in\CR sdjoshi@ee.iitd.ac.in}

\title{Unified Functorial Signal Representation I: From Grothendieck fibration to Base structured categories.}
\author{Salil Samant and Shiv Dutt Joshi}

\begin{document}

\maketitle
\begin{abstract}
	In this paper we study categories $(F,\mathbf{C},\mathbf{D})$ and $(\mathbb{F},\mathbf{C},\mathbf{Set})$ as graphs of a functor and prove these to be fibred on $\mathbf{C}$. Then we examine Grothendieck construction in the context of an ordinary functor $F: \mathbf{C} \rightarrow \mathbf{D}$ through the concept of trivial categorification, using an appropriate functor $\mathbf{F}: \mathbf{C} \xrightarrow{F} \mathbf{D} \xrightarrow{I} \mathbf{Cat}$  to construct $\int_{\mathbf{C}^{op}} \bar{\mathbf{F}}$. This category characterizes a functor as an abstract right category action while its dual $\mathcal{X} \rtimes_{\mathbf{F}} \mathbf{C}$ or $(\int_{\mathbf{C}^{op}} \bar{\mathbf{F}})^{op}$ characterizes a functor as an abstract left category action. Similarly using $\mathbb{F}: \mathbf{C} \xrightarrow{F} \mathbf{D} \xrightarrow{U} \mathbf{Set}$ we define $\mathcal{X} \rtimes_{\mathbb{F}} \mathbf{C}$ and ${\int_{\mathbf{C}^{op}} \bar{\mathbb{F}}}$ as categories denoting concrete left and right actions of $\mathbf{C}$ respectively. Collectively referred to as `base structured categories', these are proven abstractly isomorphic to the base category $\mathbf{C}$ but concretely isomorphic to each other or $(\mathbb{F},\mathbf{C},\mathbf{Set}) \cong  \int_{\mathbf{C}} \bar{\mathbb{F}} \cong \mathcal{X} \rtimes_{\mathbb{F}} \mathbf{C}$. These are special instances of fibred categories where the base category is $\mathbf{C}$ and fibres are $\mathbf{D}$ objects being treated as trivial categories. The perspective of making only base structure explicit through category theory concealing the vertical structure using identity morphisms enables one to combine intuitions of Grothendieck's relative and Leyton's generative theory. As explored further, it greatly facilitates the application of functors in certain fundamental applications which hitherto have been treating objects of category $\mathbf{D}$ purely in a set theoretic way.
	
\end{abstract}

\section{Introduction}
\label{intro}

Here we introduce and study six categories forming a kind of family for which we have coined the term base structured categories. These precise mathematical expressions characterizing a functor in certain distinct ways enable us to exploit both the category and (structured) set theoretic perspectives simultaneously utilizing advantages of both these theories immensely from an application viewpoint. Although from a pure category theory perspective it can be argued that these don't yield new data apart from that already contained in the functor except for the new perspective of treating objects as trivial categories. However we demonstrate that such a concept of looking at ordinary objects as trivial categories in-fact strengthens the possibility of applying category theory (at least partly) where traditionally objects are being treated in set-theoretic manner without realizing category theory lurking beneath the surface in applications. Moreover using these trivial categories we can form precise mathematical expressions such as $\mathcal{X} \rtimes_{\mathbf{F}} \mathbf{C}$ where $\mathcal{X} = \mathbf{F}X \amalg \mathbf{F}Y \amalg ... $ is the coproduct of objects considered as trivial categories which always exists in $\mathbf{Cat}$ even when the coproduct need not exist in the $\mathbf{D}$ category truly generating the possibility of characterizing the functor as multi-object action of a general category explicitly as semidirect product of purely categories. The deep significance of this distinct perspective is motivated in Section \ref{motiv}. For instance being obviously isomorphic (abstractly) to $\mathbf{C}$ it seems that $(F,\mathbf{C},\mathbf{D})$ is no more interesting than $\mathbf{C}$; however as Proposition~\ref{prop} proves that it is concretely different from $\mathbf{C}$ and therefore possibly carries additional set-theoretic structure in addition to being a category. This creates possibility of acknowledging the fact that applications where mathematical constructions have functorial properties can benefit from using both category theory (with well-known strength of relative perspective or analogy and comparison) and set theory (with the well-known strength of computation on elements within the objects) simultaneously through the distinct perspective of base structured categories as studied in this paper. The perspective is roughly summarized in Table~\ref{perspective}.       

\begin{center}
	\begin{table}[ht]
		\label{perspective}
		\centering
		\resizebox{\linewidth}{!}{
			\begin{tabular}{|c|c|c|}\hline
				\textbf{Fibred Category} & \textbf{Base Structured Category}  \\ \hline
				Pure category theory & Partly category theory and $\mathbf{D}$-theory (structured sets) \\ \hline
				Arrows decomposed into vertical and Cartesian & All arrows Cartesian (trivial vertical arrows)  \\ \hline
				Arrows treated fundamental & Objects treated in set theory while arrows depict relativity  \\ \hline
				Complete categorification ($\mathbf{D}$ treated as 2-category) & Partial categorification ($\mathbf{D}$ treated as trivial 1-category)  \\ \hline
				Fibres are pure categories & Fibres are structured sets viewed as trivial categories \\ \hline
			\end{tabular}
		}
		\caption{Heuristic of Base structured category combining category theory with set theory}
	\end{table}
\end{center}
In applications such as signal representation where classical set-theoretic Hilbert and Banach theory is being used; this perspective (the graph of a functor) made us realize that categories are implicitly involved and the fundamental concept of redundancy directly affecting true information within a signal simply follows from the relative point of view as studied comprehensively in~\cite{salilp3}. Some other applications involving set-theoretic actions and symmetry also benefit from the perspective (of transformation categories) and are explored in~\cite{salilp2}. The abstract pertaining to signal representation was presented earlier in CT 2016 \cite{salilct2016} and expanded in~\cite{salilp3}.

\subsection{Structure and Contribution}

Two natural motivations which led to mathematical formulation of these categories  are discussed at length in Section~\ref{motiv}. It serves as an overview and justifies their independent study. The category characterizing a functor as a structure preserving morphism is defined in Section~\ref{fcd} which we have termed abstract graph of a functor. It is shown as both a fibration and an opfibration in Proposition~\ref{prop2}. A related concrete version is formulated using Definition~\ref{defn2} and proved as opfibration in Proposition~\ref{prop3}. As far as we understand at this point, the terminology might conflict with existing notion of graph of a functor for a reason discussed in Section~\ref{sec:graph}.

In the next Section~\ref{act}, we make it precise how base structured categories generalize the familiar transformation groupoid using Proposition~\ref{prop4} justifying terminology of transformation categories. First we examine the classic Grothendieck construction in the context of an ordinary (1-)functor $F: \mathbf{C} \rightarrow \mathbf{D}$ through a suitable composition as $\mathbf{F}: \mathbf{C} \xrightarrow{F} \mathbf{D} \xrightarrow{I} \mathbf{Cat}$ using the precise concept of trivial categorification discussed in Section~\ref{trivcat}. Using this we construct a category $\int_{\mathbf{C}^{op}} \bar{\mathbf{F}}$ as defined in~\ref{defn3} denoting right abstract action which could be thought of as the left action of $\mathbf{C}^{op}$ . Definition~\ref{defn4} correspondingly defines a category denoting concrete right action of category on the underlying sets of objects. The left action counterparts are formulated as Definition~\ref{defn5} and Definition~\ref{defn6} respectively. The duality between categories defined as right actions and left actions is discussed in Section~\ref{dual}. Given an ordinary functor it is possible to construct a category $\int_{\mathbf{C}^{op}} \bar{\mathbf{F}}$. Its opposite is a category $(\int_{\mathbf{C}^{op}} \bar{\mathbf{F}})^{op}$ seen as identical to $\mathcal{X} \rtimes_{\mathbf{F}} \mathbf{C}$. Under the condition $\mathbf{C} \cong \mathbf{C}^{op}$, the category ${\int_{\mathbf{C}} \bar{\mathbf{F}}}$ which denotes functor as a abstract multi-object right category action is also isomorphic to abstract left action. 

Finally we make use of notions defined earlier to illustrate that base structured categories being categorical fibrations are abstractly isomorphic to the base but concretely to each other culminating in Proposition~\ref{prop}.

All basic notions of category theory utilized in the main body of paper are recalled at the end in Appendix~\ref{subsec:cf},~\ref{sec:pull},~\ref{fib1}. 

\subsection{Motivation}
\label{motiv}

There are two natural yet significant motivations to introduce and study six categories viz. $(F,\mathbf{C},\mathbf{D})$, $(\mathbb{F},\mathbf{C},\mathbf{Set})$, $\mathcal{X} \rtimes_{\mathbf{F}} \mathbf{C}$, $\int_{\mathbf{C}^{op}} \bar{\mathbf{F}}$, $\mathcal{X} \rtimes_{\mathbb{F}} \mathbf{C}$ and $\int_{\mathbf{C}^{op}} \bar{\mathbb{F}}$. The categories characterizing a functor as a structure preserving morphism cater to the purpose of signal representation  while those characterizing a functor as an action of category on objects serve to express actions of various mathematical structures (at least partly through trivial categorification) as semidirect product of categories. These semidirect (or Grothendieck completion) expressions meaningfully hold even in cases where coproduct of a family of objects on which the action takes place need not exist in codomain category. These motivations are as follows:   

\begin{enumerate}

\item \textbf{A unified category theoretic perspective of all signal spaces and precise arrow-theoretic mathematical formulation of redundancy within signals} 

A surprisingly trivial construction $(F,\mathbf{C},\mathbf{D})$ (being very obviously isomorphic to category $\mathbf{C}$) for the purposes of this sequel offers a distinctly refreshingly new perspective on any ordinary functor ${F}: \mathbf{C} \rightarrow \mathbf{D}$. It is the mathematical expression which combines the intuitive generative perspective of Leyton's Theory in psychology~\cite{Leyton86a},~\cite{Leyton01} along with well-known Grothendieck's relative point of view~\cite{grorel} by treating it as a fibration. It leads to formulation of category $(\mathbb{F},\mathbf{C},\mathbf{Set})$ which can be viewed as opfibration. The distinct perspective in a nutshell is that while general fibred categories carry both structures i.e from base category as well as fibre categories through Cartesian and vertical arrows, the base structured categories carry only the base structure concealing the vertical structure within the objects since vertical arrows are just identities. 

This perspective can be utilized in the context of signal representation where we can view a signal space as being fibred on some category $\mathbf{B}$ that captures natural generative mechanism of a given signal to be represented (the objects are treated as generators while arrows capture the transfer of Leyton's theory). Then intra-signal redundancy gets naturally modeled as Cartesian lift of the base arrows. Going by the philosophy of category theory which treats arrows as more fundamental to objects these categories exploit the generative base $\mathbf{B}$ structure of a signal explicitly in category theory while the vertical $\mathbf{D}$ structure (often Banach or Hilbert or Riesz space) hidden from categorical formulation can be treated in set-theory as usual at the local level of objects utilizing benefits of both at application level. In essence the signal space can be treated as a category $(F,\mathbf{C},\mathbf{D})$ emphasizing the generative structure of a signal.

Although this is studied in detail~\cite{salilp3} here we briefly discuss a simple prototypical example of a music signal as shown in Figure~\ref{fig:music_signal} to show how this is put to practical application in a signal carrying simple translational redundancies.

\begin{figure}
	\resizebox*{\textwidth}{!}{
\begin{tikzpicture}
\draw[domain=-1.57:1.57,samples=100] plot(\x,{sin(2*\x r)});
\draw[domain=7.85:11,samples=100] plot(\x,{sin(2*\x r)});
\draw plot [smooth] coordinates {(1.57,0) (3,-0.5) (4,0.75) (5,1) (6,-0.4) (7,1) (7.85,0)};
\draw[->] (-4,0) -- (13,0) node [pos=1,below] {$t$};
\draw plot [smooth] coordinates {(-1.57,0) (-2.5,1) (-2.8,0) (-3.5,-0.5) (-3.7,-1) (-4,0)};
\draw plot [smooth] coordinates {(11,0) (11.5,-0.5) (12,1) (12.5,0.5)};
\draw[->] (-3.5,-1) --(-3.5,1.5) node [pos=1,left] {$\mathbb{R}$};
\draw[-,dashed] (-1.57,1.5) -- (-1.57,-1.5);
\draw[-,dashed] (1.57,1.5) -- (1.57,-1.5);

\draw[-,dashed] (7.85,1.5) -- (7.85,-1.5);
\draw[-,dashed] (11,1.5) -- (11,-1.5);
\draw[<->] (-1.57,-1.3) --(1.57,-1.3) node[midway,fill=white] {$I$};
\draw[<->] (7.85,-1.3) --(11,-1.3) node[midway,fill=white] {$J$};
\node at (0,1.3) [draw=none] {$f$};
\node at (9.4,1.3) [draw=none] {$f'$};
\node at (-3.5,0) [draw=none,above left] {$0$};
\end{tikzpicture}
}
\caption{A global music signal where the local signal $f=F(M)$ in interval $I$ is transfered(translated) to $f'=F(M')$ in interval $J$}
\label{fig:music_signal}
\end{figure}
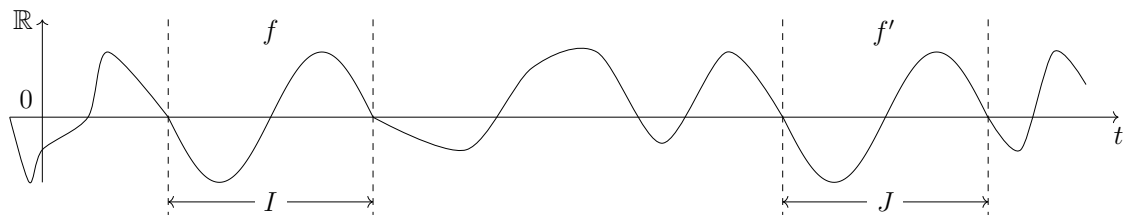

A music signal has a sheet-music generative structure exhibiting several local melody compositions and their transfers introducing translational redundancies within the signal. One could consider a simple category structure on sheet music where symbols (or their combination) are objects and there are arrows between identical symbols denoting translation. Now consider specific instance of the melody $M$ being transfered to $M'$ within entire composition then corresponding signals $f$ and $f'$ generated also preserve this structure as captured by a functor $F$. 

\begin{equation}
\xymatrix{
	M \ar[r]^{T} & M'
} 
\quad \\
\xrightarrow{F}
\quad
\xymatrix{
	(\Bbb R,\Sigma_{\Cal B}) \ar[r]_{id} & (\Bbb R,\Sigma_{\Cal B})  \\
	(I,\Sigma_{I}) \ar[r]_{g} \ar@{->}[u]^{f = F(M)} & (J,\Sigma_{J}) \ar@{->}[u]_{f' = F(M')}
}
\end{equation}

Now $(I,\Sigma_I,\mu)$  and $(J,\Sigma_J,\nu)$ are 
measure spaces and $g :I \to J$ is an inverse-measure-preserving function.  
Also $f'$ is a $\nu$-virtually measurable $[-\infty,\infty]$-valued function 
defined on $J$, then $f'g = f$ is $\mu$-virtually measurable. It can be shown that 
$f'\mapsto f'g: L^2(\nu)\to L^2(\mu)$ induces a linear operator
$T:L^2(\nu)\to L^2(\mu)$ such that $\|Tv\|_2=\|v\|_2$ for every
$v\in L^2(\nu)$. This naturally suggests a signal space which respects or is
compatible with this structure. Such a signal space which is matched to natural generative structure and inter-relationships
between these generators takes the form of base structured category $(L^2|_{\mathbf{B}}, \mathbf{B}, \mathbf{Hilb}) \rightarrow \mathbf{B}$ as compared to classic generically fixed space $L^2(R)$ where $\mathbf{B}$ captures natural generative structure of signal in sense of Leyton's theory~\cite{Leyton01}. For a faithful functor this matched space is isomorphic to image subcategory $L^2({\mathbf{B}})$ and matched representation in this category as signal space is given by Equation~\ref{rep} (using contravariant form of $L^2$ functor on measure spaces which is same as covariant form on measure algebras; see~\cite{salilp3}) :

\begin{equation}
\label{rep}
Signal=
\underbrace{f                       \rule[-12pt]{0pt}{5pt}}_{\mbox{classic}}
+\underbrace{f' - L^2(g^{-1})f                \rule[-12pt]{0pt}{5pt}}_{\mbox{relative error}}
+\underbrace{L^2(g^{-1})f \rule[-12pt]{0pt}{5pt}}_{\mbox{generative relative term}}
+  ...,
\end{equation}

\begin{itemize}
	\item $f$ : Local signal Representation using standard basis in $L^2(I)$
	
	\item $f'$ : Local signal Representation using standard basis in $L^2(J)$
	
	\item $L^2(g^{-1})f$ : Transformed/Transfered local signal from $L^2(I)$ to $L^2(J)$
	
	\item $f' - L^2(g^{-1})f$ : True innovation or Difference between transformed and measured local signal in $L^2(J)$.

\end{itemize}

We discuss in~\cite{salilp3} that this could be broadly generalized to prove that such a category theoretic formulation mathematically models various existing differential encoding standards of signal and image representation, that are presently not understood in category-theoretic way.

\item \textbf{Transformation Categories generalizing transformation groupoid $X \wquot G$}
In the spirit of a group action on a set giving rise to a transformation groupoid; the base category action on the underlying sets of D-objects (in case of $\mathbf{D}$ being construct) gives rise to categories $\mathcal{X} \rtimes_{\mathbb{F}} \mathbf{C}$ and $\int_{\mathbf{C}^{op}} \bar{\mathbb{F}}$ which capture the concrete action of base arrows on the underlying elements of the D-objects. These base structured categories are then motivated as the generalization of classic transformation groupoids suggesting the terminology {\bf Transformation Categories}.

Now the semidirect product of groups (which is a group), semidirect product of monoids (which is again a monoid), semidirect product of categories (which is again a category) and of a set (treated as a discrete category) and a category (which obviously results into a category often termed as transformation groupoid). All of these are just semidirect products of purely categories since the structures of a  group, monoid, set are all captured using a category. But what about the case of objects of an arbitrary category $\mathbf{D}$ which cannot be easily treated as categories? It is visually seen using commutative diagrams that even in an arbitrary case of a functor ${F}: \mathbf{C} \rightarrow \mathbf{D}$ clearly there is an induced action of arrows. This is the motivation for introducing new meaningful mathematical expressions such as $\mathcal{X} \rtimes_{\mathbf{F}} \mathbf{C}$ and $\int_{\mathbf{C}^{op}} \bar{\mathbf{F}}$ which make implicit action in a functor precisely explicit at least partly by treating objects as trivial categories. The action at level of object is only partially captured by category theory in the sense that object after being transformed by arrows is still viewed as same object using trivial category. However note that when the object undergoes an internal state transformation this change can be captured at level of elements within the object considered as structured set. Thus realizing that actions at global (or outer level) are being captured category-theoretically while actions at local (or inner) object level implicitly remain at set-theoretic level one actually makes use both these theories simultaneously for application purposes especially in symmetries and geometries. We motivate this point in detail for more clarity.

A semidirect product of two groups $G$ and $H$ is the same as a group homomorphism $\phi: G \rightarrow Aut(H)$ i.e. an action of $G$ on $H$ and denoted as $H \rtimes_{\phi} G$. In category theory this is same as Grothendieck construction of $F: \mathbf{G} \rightarrow \mathbf{Grp}$ where $\mathbf{G}$ is a category with a single object $\ast$ and morphisms $Hom(\ast,\ast) = G$. The category $\mathbf{Grp}$ is a category of all groups and group homomorphisms while $F(\ast) = H$. Thus this action is nothing but the transformation of the object $H$ by arrows $G$. Categorically $\mathbf{H} \rtimes_{\mathbf{F}} \mathbf{G}$ and visually,
\[
\xymatrix{
	\textbf{G} \ar^{F} [r] &\textbf{Grp} \ar^{I} [r] &\textbf{Cat}
}
\]
\[
\boxed{\xymatrix{
		\ast \ar_{g' \circ g}[dr] \ar^{g}[r] & \ast \ar^{g'}[d] \\
		& \ast 
}}
\xymatrix{\ar@<-20pt>^{F}[r] &}
\boxed{\xymatrix{
		H \ar_{Fg' \circ Fg}[dr] \ar^{Fg}[r] & H \ar^{Fg'}[d] \\
		& H 
}}
\xymatrix{\ar@<-20pt>^{I}[r] &}
\boxed{\xymatrix{
		\mathbf{H} \ar_{\mathbf{F}g' \circ \mathbf{F}g}[dr] \ar^{\mathbf{F}g}[r] & \mathbf{H} \ar^{\mathbf{F}g'}[d] \\
		& \mathbf{H} 
}}
\]
\begin{center}
	\begin{table}[ht]
		\centering
		\resizebox{\linewidth}{!}{
			\begin{tabular}{|c|c|c|c|}\hline
				Group theoretic action & Classic Semidirect Product & Functor as action & Category-theoretic Semidirect Product \\ \hline
				$\phi : G \rightarrow Aut(H)$ & $H \rtimes_{\phi} G$ & $\mathbf{F}: \mathbf{G} \rightarrow \mathbf{Grp} \rightarrow \mathbf{Cat}$ & $\mathbf{H} \rtimes_{\mathbf{F}} \mathbf{G}$ \\ \hline
			\end{tabular}
		}
		\caption{Group action on a Group - Set theoretic versus Category theoretic}
		\label{table}
	\end{table}
\end{center}

Likewise a semidirect product of two monoids $M$ and $N$ is the same as a monoid homomorphism $\phi: M \rightarrow End(N)$ i.e. an action of $M$ on $N$ and denoted as $N \rtimes_{\phi} M$. In category theory this is same as Grothendieck construction of $F: \mathbf{M} \rightarrow \mathbf{Mon}$ where $\mathbf{M}$ is a category with a single object $\ast$ and morphisms $Hom(\ast,\ast) = M$. The category $\mathbf{Mon}$ is a category of all monoids and monoid homomorphisms while $F(\ast) = N$. Thus this is nothing but the action on the object $N$ by arrows $M$. Categorically $\mathbf{N} \rtimes_{\mathbf{F}} \mathbf{M}$ and Visually,
\[
\xymatrix{
	\textbf{M} \ar^{F} [r] &\textbf{Mon} \ar^{I} [r] &\textbf{Cat}
}
\]
\[
\boxed{\xymatrix{
		\ast \ar_{m' \circ m}[dr] \ar^{m}[r] & \ast \ar^{m'}[d] \\
		& \ast 
}}
\xymatrix{\ar@<-20pt>^{F}[r] &}
\boxed{\xymatrix{
		N \ar_{Fm' \circ Fm}[dr] \ar^{Fm}[r] & N \ar^{Fm'}[d] \\
		& N 
}}
\xymatrix{\ar@<-20pt>^{I}[r] &}
\boxed{\xymatrix{
		\mathbf{N} \ar_{\mathbf{F}m' \circ \mathbf{F}m}[dr] \ar^{\mathbf{F}m}[r] & \mathbf{N} \ar^{\mathbf{F}m'}[d] \\
		& \mathbf{N} 
}}
\] 

\begin{center}
	\begin{table}[ht]
		\centering
		\resizebox{\linewidth}{!}{
			\begin{tabular}{|c|c|c|c|}\hline
				Monoid theoretic action & Classic Semidirect Product & Functor as action & Category-theoretic Semidirect Product \\ \hline
				$\phi : M \rightarrow End(N)$ & $N \rtimes_{\phi} M$ & $\mathbf{F}: \mathbf{M} \rightarrow \mathbf{Mon} \rightarrow \mathbf{Cat}$ & $\mathbf{N} \rtimes_{\mathbf{F}} \mathbf{M}$ \\ \hline
			\end{tabular}
		}
		\caption{Monoid action on a Monoid - Set theoretic versus Category theoretic}
		\label{table}
	\end{table}
\end{center}  

The generalization of this for categories is well known as the semidirect product of categories. This was formulated by Grothendieck in~\cite{SGA1} and widely known as Grothendieck construction of $F: \mathbf{C} \rightarrow \mathbf{Cat}$ where $\mathbf{C}$ is a general category. The category $\mathbf{Cat}$ is a category of all categories and functors. Thus this is nothing but the action of arrows of category $\mathbf{C}$ on the objects (which are categories) within the image sub-category $F(\mathbf{C})$. Categorically $(FX \amalg FY \amalg ...) \rtimes_{{F}} \mathbf{C}$ and Visually,
\[
\xymatrix{
	\textbf{C} \ar^{F} [r] &\textbf{Cat}
}
\]
\[
\boxed{\xymatrix{
		X \ar_{g \circ f}[dr] \ar^{f}[r] & Y \ar^{g}[d] \\
		& Z 
}}
\xymatrix{\ar@<-20pt>^{F}[r] &}
\boxed{\xymatrix{
		FX \ar_{Fg \circ Ff}[dr] \ar^{Ff}[r] & FY \ar^{Fg}[d] \\
		& FZ 
}}
\] 
\begin{center}
	\begin{table}[ht]
		\centering
		\resizebox{\linewidth}{!}{
			\begin{tabular}{|c|c|}\hline
				Functor into $\mathbf{Cat}$ as action of category on category & Category-theoretic Semidirect Product \\ \hline
				${F}: \mathbf{C} \rightarrow \mathbf{Cat}$ & $(FX \amalg FY \amalg ...) \rtimes_{{F}} \mathbf{C}$ \\ \hline
			\end{tabular}
		}
		\caption{Category action on a Category}
		\label{table}
	\end{table}
\end{center}

Now because a general functor can be defined across categories with different structures; variants of the action notion could be captured by various functors. As an easy example, a common group action on a set is categorically $\mathbf{X} \rtimes_{\mathbf{F}} \mathbf{G}$ and visually expressed as 
\[
\xymatrix{
	\textbf{G} \ar^{F} [r] &\textbf{Set} \ar^{I} [r] &\textbf{Cat}
}
\]
\[
\boxed{\xymatrix{
		\ast \ar_{g' \circ g}[dr] \ar^{g}[r] & \ast \ar^{g'}[d] \\
		& \ast 
}}
\xymatrix{\ar@<-20pt>^{F}[r] &}
\boxed{\xymatrix{
		X \ar_{Fg' \circ Fg}[dr] \ar^{Fg}[r] & X \ar^{Fg'}[d] \\
		& X 
}}
\xymatrix{\ar@<-20pt>^{I}[r] &}
\boxed{\xymatrix{
		\mathbf{X} \ar_{\mathbf{F}g' \circ \mathbf{F}g}[dr] \ar^{\mathbf{F}g}[r] & \mathbf{X} \ar^{\mathbf{F}g'}[d] \\
		& \mathbf{X} 
}}
\]
\begin{center}
	\begin{table}[ht]
		\centering
		\resizebox{\linewidth}{!}{
			\begin{tabular}{|c|c|c|c|}\hline
				Group theoretic action & Classic Semidirect Product & Functor as action & Category-theoretic Semidirect Product \\ \hline
				$\phi : G \rightarrow Aut(X)$ & -- & $\mathbf{F}: \mathbf{G} \rightarrow \mathbf{Set} \rightarrow \mathbf{Cat}$ & $\mathbf{X} \rtimes_{\mathbf{F}} \mathbf{G}$ or $X \wquot G$ \\ \hline
			\end{tabular}
		}
		\caption{Group action on a Set - Set theoretic versus Category theoretic}
		\label{table}
	\end{table}
\end{center}

This concept of functor as an action is generalized using the example of category action on a set $\mathcal{X}$~\cite{action} which is nothing but a functor ${F}: \mathbf{C} \rightarrow \mathbf{Set}$  where it is shown that action is on the coproduct set $\mathcal{X} =  \amalg_{X \in Ob(\mathbf{C})} F(X)$. This motivates one to consider the action of any ordinary functor not necessarily into $\mathbf{Set}$ as a potential semidirect product of categories. But two main hurdles for an ordinary category $\mathbf{D}$ unlike $\mathbf{Set}$ are categorification of $\mathbf{D}$-objects or in other words finding an inclusion functor into $\mathbf{Cat}$ and existence of coproducts in $\mathbf{D}$. However in the schematic of a functor ${F}: \mathbf{C} \rightarrow \mathbf{D}$ it is clear visually that arrows $f,g..$ induce actions on $FX,FY,..$ through arrows $Ff,Fg,..$, since every arrow in any category could be viewed as the transformation of domain object into a codomain object. This action can be mathematically precise provided one takes the perspective of fibration on $(F,\mathbf{C},\mathbf{D})$ and in the reverse way tries to find a contravariant functor whose Grothendieck construction yields a fibred category at least equivalent to $(F,\mathbf{C},\mathbf{D})$ which is discussed in Section~\ref{act}. Thus in the case of any category $\mathbf{D}$ by treating the objects as trivial categories it becomes possible to express this action as semidirect product of categories precisely as $(\mathbf{F}X \amalg \mathbf{F}Y \amalg ...) \rtimes_{{F}} \mathbf{C}$ since the coproduct is $(\mathbf{F}X \amalg \mathbf{F}Y \amalg ...)$ which always exists being a coproduct of objects in $\mathbf{Cat}$. Now since the objects are treated as trivial categories the action on the individual elements of the underlying sets of objects remains hidden within the objects. However treating $\mathbf{D}$-objects as sets with added axioms one can make this action set-theoretic at level of objects. Alternately in such cases where $\mathbf{D}$ is a concrete category over $\mathbf{Set}$ the action can be made explicit using just pure category theory by utilizing the underlying concrete functor ${U}: \mathbf{D} \rightarrow \mathbf{Set}$ to express the action again as semi-direct product of categories viz $\mathcal{X} \rtimes_{\mathbb{F}} \mathbf{C}$. Using duality or opposite categories we get the dual notion of right actions of categories. 

This motivates us to interpret a general functor $F:\mathbf{C}\rightarrow \mathbf{D}$ as an action; Visually we have 
\[
\xymatrix{
	\textbf{C} \ar^{F} [r] &\textbf{D} \ar^{I} [r] &\textbf{Cat}
}
\]
\[
\boxed{\xymatrix{
		X \ar_{g \circ f}[dr] \ar^{f}[r] & Y \ar^{g}[d] \\
		& Z 
}}
\xymatrix{\ar@<-20pt>^{F}[r] &}
\boxed{\xymatrix{
		FX \ar_{Fg \circ Ff}[dr] \ar^{Ff}[r] & FY \ar^{Fg}[d] \\
		& FZ 
}}
\xymatrix{\ar@<-20pt>^{I}[r] &}
\boxed{\xymatrix{
		\mathbf{F}X \ar_{\mathbf{F}g \circ \mathbf{F}f}[dr] \ar^{\mathbf{F}f}[r] & \mathbf{F}Y \ar^{\mathbf{F}g}[d] \\
		& \mathbf{F}Z 
}}
\]   

\begin{center}
	\begin{table}[ht]
		\centering
		\resizebox{\linewidth}{!}{
			\begin{tabular}{|c|c|}\hline
				Ordinary functor as action of category on objects & Category-theoretic Semidirect Product \\ \hline
				${F}: \mathbf{C} \rightarrow \mathbf{D}$ as $\mathbf{F}: \mathbf{C} \rightarrow \mathbf{D} \rightarrow \mathbf{Cat}$ & $(\mathbf{F}X \amalg \mathbf{F}Y \amalg ...) \rtimes_{\mathbf{F}} \mathbf{C}$ \\ \hline
			\end{tabular}
		}
		\caption{Category $\mathbf{C}$ action on Objects $FX,FY,...$ within Image subcategory F$\mathbf{C}$}
		\label{table}
	\end{table}
\end{center}

While the action on underlying sets of objects within image subcategory F$\mathbf{C}$ is motivated by functor $\mathbb{F}: \mathbf{C} \rightarrow \mathbf{D} \rightarrow \mathbf{Set}$ as an action; Visually we have  

\[
\xymatrix{
	\textbf{C} \ar^{F} [r] &\textbf{D} \ar^{U} [r] &\textbf{Set}
}
\]
\[
\boxed{\xymatrix{
		X \ar_{g \circ f}[dr] \ar^{f}[r] & Y \ar^{g}[d] \\
		& Z 
}}
\xymatrix{\ar@<-20pt>^{F}[r] &}
\boxed{\xymatrix{
		FX \ar_{Fg \circ Ff}[dr] \ar^{Ff}[r] & FY \ar^{Fg}[d] \\
		& FZ 
}}
\xymatrix{\ar@<-20pt>^{U}[r] &}
\boxed{\xymatrix{
		\mathbb{F}X \ar_{\mathbb{F}g \circ \mathbb{F}f}[dr] \ar^{\mathbb{F}f}[r] & \mathbb{F}Y \ar^{\mathbb{F}g}[d] \\
		& \mathbb{F}Z 
}}
\]   

\begin{center}
	\begin{table}[ht]
		\centering
		\resizebox{\linewidth}{!}{
			\begin{tabular}{|c|c|}\hline
				Functor as action of category on underlying sets of objects & Category-theoretic Semidirect Product \\ \hline
				${F}: \mathbf{C} \rightarrow \mathbf{D}$ as $\mathbb{F}: \mathbf{C} \rightarrow \mathbf{D} \rightarrow \mathbf{Set}$ & $(\mathbb{F}X \amalg \mathbb{F}Y \amalg ...) \rtimes_{\mathbb{F}} \mathbf{C}$ \\ \hline
			\end{tabular}
		}
		\caption{Category $\mathbf{C}$ action on Objects $FX,FY,...$ within Image subcategory F$\mathbf{C}$}
		\label{table}
	\end{table}
\end{center}

In essence this perspective motivates us to model a Klein geometry partly category theoretically as $\mathcal{X} \rtimes_{\mathbb{F}} \mathbf{H}$ where manifolds are trivially categorified bringing in a functor underlying geometries. Such a mathematical formulation which works for any arbitrary category $\mathbf{D}$ (in this context category of manifolds) could lead to possibly new formulation of {\bf groupoid geometries} combining category theory and set-theoretic manifold theory  as some sort of multi-object generalization of Klein group geometries since the major hurdle of existence of coproduct objects in $\mathbf{D}$ is eliminated by trivially categorizing objects. This is presently being studied in~\cite{salilp2} and is currently in its draft version.

In summary, the transformation categories denote the action of $\mathbf{C}$ category arrows on arbitrary $\mathbf{D}$-category objects lying within image subcategory $F(\mathbf{C})$; thus every action in mathematics becomes mathematically expressible in form of categorical semidirect product. 

\end{enumerate}

\section{The category $(F,\mathbf{C},\mathbf{D})$ characterizing a functor as a structure preserving morphism}
\label{fcd}
In material set theory, a function between sets is defined as an appropriate subset of the Cartesian product set. This motivates the question if in a similar fashion could a functor be viewed as an appropriate subcategory of a product category ? Given any abstract functor $F: \mathbf{C} \rightarrow \mathbf{D}$ using the commutative diagrams of category $\mathbf{C}$ and image subcategory $F\mathbf{C}$, one could pair and fuse them to construct a new commutative diagram in which the usual axioms of a category are satisfied when everything is done component-wise. This is a subcategory of a product category $\mathbf{C} \times \mathbf{D} $ so it satisfies the axioms of category thus giving us a category characterizing the classic functor.

\begin{definition} [Abstract graph of a functor]
	\label{defn1}
	Consider a (covariant) functor $F:\mathbf{C}\rightarrow \mathbf{D}$. Then $(F,\mathbf{C},\mathbf{D})$ or $\mathbf{G}_F$ is a category consisting of:
	\begin{itemize}
		\item \textbf{objects}: a collection $(X,FX),(Y,FY), ...$ denoted by $\mbox{Ob}(\mathbf{G}_F)$
		
		\item \textbf{morphisms}: a collection $\mathbf{G}_F((X,FX),(Y,FY))=\{(f,Ff):(X,FX)\rightarrow (Y,FY)\}$
		
		\item \textbf{identity}: for each $(X,FX)$, the morphism $\mathbf{1}_{(X,FX)} = (\mathbf{1}_X,\mathbf{1}_{FX})$
		
		\item \textbf{composition}: if $(g,f)\mapsto g\circ f$ in $\mathbf{C}$ then $((g,Fg),(f,Ff))\mapsto (g,Fg)\circ (f,Ff)=(g\circ f,Fg\cdot Ff)$
		
		\item \textbf{unit laws}: for $(f,Ff)$ we have $\mathbf{1}_{(Y,FY)}\circ (f,Ff)=(f,Ff)=(f,Ff)\circ \mathbf{1}_{(X,FX)}$
		
		\item \textbf{associativity}:  $(h,Fh)\circ ((g,Fg)\circ (f,Ff))=((h,Fh)\circ (g,Fg))\circ (f,Ff)$
	\end{itemize}
\end{definition}

The schematic representation of a functor $F: \mathbf{C} \rightarrow \mathbf{D}$ can be denoted using commutative diagrams~\cite{CWM} as  
\begin{equation*}
\xymatrix @R=0.4in @C=0.8in{
	X \ar@(dl,ul)[]|{id_X} \ar[r]^{f} \ar[d]_{h \circ g \circ f} \ar[dr]_{g \circ f} & Y \ar@(dr,ur)[]|{id_Y}  \ar[d]^{g} \\
	W \ar@(dl,ul)[]|{id_W} & Z \ar@(dr,ur)[]|{id_Z} \ar[l]_{h}
}
\xymatrix @R=0.4in @C=0.8in{
	FX \ar@(dl,ul)[]|{id_{FX}} \ar[r]^{Ff} \ar[d]_{Fh \circ Fg \circ Ff} \ar[dr]_{Fg \circ Ff} & FY \ar@(dr,ur)[]|{id_{FY}}  \ar[d]^{Fg} \\
	FW \ar@(dl,ul)[]|{id_{FW}} & FZ \ar@(dr,ur)[]|{id_{FZ}} \ar[l]_{Fh}
}
\end{equation*}

Then these individual diagrams are combined in a single diagram to give a schematic representation of $(F,\mathbf{C},\mathbf{D})$ as 
\begin{equation*}
\begin{tikzcd}[row sep=large, column sep=30ex]
(X,FX) \arrow[loop left]{l}{\mathrm{id}_{(X,FX)}} \arrow{dr}{(g\circ f,Fg\cdot Ff)} \arrow{r}{(f,Ff)} \arrow[swap]{d}{(h\circ g\circ f,Fh\cdot Fg\cdot Ff)} & (Y,FY) \arrow{d}{(g,Fg)} \\
(W,FW) & (Z,FZ) \arrow{l}{(h,Fh)}
\end{tikzcd}
\end{equation*}

This definition should not be entirely surprising if we ponder upon the fact that every structure preserving arrow will transport a structure into a similar structure. Then pairing the structure with its image to form a new entity will naturally satisfy axioms of that structure. To us it appears to be more appropriate to call it the \textbf{graph of a functor}. However, since there already exists a notion of a graph of a functor in the category theory with this terminology the reader must exercise caution ; see Remarks~\ref{sec:graph}. Thus we have seen this category is extremely simple to construct using the definition of a functor as a structure preserving morphism. The structure-preserving property of the morphism (functor) plays an extremely crucial role in imparting this structure (category) to the entity $F:\mathbf{C}\rightarrow \mathbf{D}$. Although this construction might not yield any new data apart from the already well-studied functor, it does offer a distinct graph perspective to look at a functor which also turns out to be a fibred category utilized in formulating category-theoretic definition of redundancy in signals as briefly motivated in Section~\ref{motiv} and studied in detail in~\cite{salilp3}. 

Next we prove that this category is a fibration. We begin with recalling the definition of fibration and opfibration.
\begin{definition}[\cite{BJ}]
	Let $P:\mathbf{E} \rightarrow \mathbf{B}$ be a usual functor;
	\begin{itemize}
		
		\item A morphism $f:X \rightarrow Y$ in $\mathbf{E}$ is Cartesian over $u:I \rightarrow J$ in $\mathbf{B}$ if $Pf=u$ and every $g:Z \rightarrow Y$ in $\mathbf{E}$ for which one has $Pg=uw$ for some $w:PZ \rightarrow I$, uniquely determines an $h:Z \rightarrow X$ in $\mathbf{E}$ above $w$ with $fh=g$.
		
		\begin{equation*}
		\xymatrix{ & \\
			\mathbf{E} \ar[dd]_{P} \\ 
			& \\
			\mathbf{B}}
		\qquad
		\xymatrix{ Z \ar@{|->}[dd] \ar@{.>}[rd]_{h} \ar[rdrr]^{g} & & \\
			& X \ar@{|->}[dd] \ar[rr]_{f} & & Y \ar@{|->}[dd] \\
			PZ \ar[rd]_{w} \ar[rdrr]^{u \circ w =Pg}  & & \\
			& I \ar[rr]_{u} & & J }
		\end{equation*}
		
		\item The functor $P:\mathbf{E} \rightarrow \mathbf{B}$ is a fibration (or a fibred category) if for every $Y \in \mathbf{E}$ and $u:I \rightarrow PY$ in $\mathbf{B}$, there is a Cartesian morphism $f:X \rightarrow Y$ in $\mathbf{E}$ above $u$.
	\end{itemize}
\end{definition}

\begin{definition}[\cite{BJ}]
	Let $P:\mathbf{E} \rightarrow \mathbf{B}$ be a usual functor;
	\begin{itemize}
		
		\item A morphism $f:X \rightarrow Y$ in $\mathbf{E}$ is opCartesian over $u:I \rightarrow J$ in $\mathbf{B}$ if $Pf=u$ and every $g:X \rightarrow Z$ in $\mathbf{E}$ for which one has $Pg=wu$ for some $w:J \rightarrow PZ$, uniquely determines an $h:Y \rightarrow Z$ in $\mathbf{E}$ above $w$ with $hf=w$.
		
		\begin{equation*}
		\xymatrix{ & \\
			\mathbf{E} \ar[dd]_{P} \\ 
			& \\
			\mathbf{B}}
		\qquad
		\xymatrix{ & & &  Z \ar@{|->}[dd]  \\
			X \ar@{|->}[dd] \ar[rurr]^{g} \ar[rr]_{f} & & Y \ar@{|->}[dd]  \ar@{.>}[ru]_{h} \\
			& & & PZ   \\
			I \ar[rr]_{u} \ar[rurr]^{w \circ u =Pg} & & J \ar[ru]_{w} }
		\end{equation*}
		
		\item The functor $P:\mathbf{E} \rightarrow \mathbf{B}$ is a opfibration (or a opfibred category) if for every $X \in \mathbf{E}$ and $u:PX \rightarrow J$ in $\mathbf{B}$, there is a opCartesian morphism $f:X \rightarrow Y$ in $\mathbf{E}$ above $u$.
	\end{itemize}
\end{definition}

\begin{proposition}
	\label{prop2}	
	Let $P:(F,\mathbf{C},\mathbf{D}) \rightarrow \mathbf{C}$ be the usual first projection functor. Then $P$ is also a split fibration and a split opfibration.
	
\end{proposition}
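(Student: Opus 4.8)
The plan is to exhibit the Cartesian lift explicitly and then check the universal property, exploiting the fact that the category $(F,\mathbf{C},\mathbf{D})$ is defined entirely component-wise. First I would unwind what the first projection functor $P$ does: it sends an object $(X,FX)$ to $X$ and a morphism $(f,Ff)$ to $f$; this is clearly a functor since composition and identities in $(F,\mathbf{C},\mathbf{D})$ are defined component-wise. Then, given $Y=(Y,FY)$ in $(F,\mathbf{C},\mathbf{D})$ and an arrow $u:I\to PY=Y$ in $\mathbf{C}$, the only candidate for a Cartesian lift is $(u,Fu):(I,FI)\to(Y,FY)$, which exists precisely because $F$ is a functor and hence $Fu:FI\to FY$ is a well-defined $\mathbf{D}$-arrow; note $P(u,Fu)=u$. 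This already forces the choice of cleavage, and because $(u'\circ u, F(u'\circ u))=(u',Fu')\circ(u,Fu)$ by functoriality of $F$ and component-wise composition, the cleavage is closed under composition, i.e. it is a \emph{split} cleavage.

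The key step is verifying the universal (Cartesian) property. Suppose $(g,Fg):(Z,FZ)\to(Y,FY)$ is any morphism in $(F,\mathbf{C},\mathbf{D})$ with $P(g,Fg)=g=u\circ w$ for some $w:PZ=Z\to I$ in $\mathbf{C}$. I must produce a unique $h$ in $(F,\mathbf{C},\mathbf{D})$ above $w$ with $(u,Fu)\circ h=(g,Fg)$. The forced choice is $h=(w,Fw):(Z,FZ)\to(I,FI)$: it lies above $w$ since $P(w,Fw)=w$, and $(u,Fu)\circ(w,Fw)=(u\circ w, Fu\circ Fw)=(g, F(u\circ w))=(g,Fg)$, using $g=u\circ w$ and functoriality of $F$. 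Uniqueness is immediate: any lift $h$ of $w$ must have first component $w$ (since $P h=w$), and in $(F,\mathbf{C},\mathbf{D})$ a morphism with first component $w$ starting at $(Z,FZ)$ is necessarily $(w,Fw)$, because objects and morphisms are \emph{paired} — the second component is not free but determined as $F$ applied to the first. This last observation is really the crux: the rigidity of the pairing is what makes every morphism Cartesian and the fibration split, and it is the one place where the special form of $(F,\mathbf{C},\mathbf{D})$ (as opposed to an arbitrary subcategory of $\mathbf{C}\times\mathbf{D}$) is used.

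The opfibration claim is entirely dual and I would simply remark that the same morphism $(u,Fu)$ serves as the opCartesian lift of $u:PX\to J$ over $X=(X,FX)$: given $(g,Fg):(X,FX)\to(Z,FZ)$ with $g=w\circ u$, the unique fill-in is $(w,Fw):(J,FJ)\to(Z,FZ)$, verified by the identical component-wise computation $(w,Fw)\circ(u,Fu)=(w\circ u,Fw\circ Fu)=(g,Fg)$ and the same rigidity argument for uniqueness. Splitness on the opfibration side again follows from $F(u'\circ u)=Fu'\circ Fu$. I do not expect any genuine obstacle here — the whole proof is a direct verification — but the step requiring the most care is making the uniqueness argument precise, i.e. articulating clearly why a morphism of $(F,\mathbf{C},\mathbf{D})$ is determined by its image under $P$; everything else is a one-line component-wise calculation that reduces to the functoriality axioms for $F$ already recorded in Definition~\ref{defn1}.
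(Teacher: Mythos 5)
Your proposal is correct and follows essentially the same route as the paper: every morphism $(f,Ff)$ is taken as the (op)Cartesian lift of its first component, existence comes from functoriality of $F$, uniqueness from the fact that a morphism of $(F,\mathbf{C},\mathbf{D})$ is determined by its first component, and splitness from $F$ being a strict functor. Your write-up is if anything more explicit than the paper's (you spell out the composite computation and the dual opCartesian verification, which the paper only sketches), but there is no substantive difference in method.
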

\begin{proof}
	Let $P:(F,\mathbf{C},\mathbf{D}) \rightarrow \mathbf{C}$ be the first projection functor where $P(X,FX) := X$ for all objects $(X,FX) \in Ob((F,\mathbf{C},\mathbf{D}))$ and $P(f,Ff) := f$ for all morphisms $(f,Ff) \in mor((F,\mathbf{C},\mathbf{D}))$. 
	
	The morphism $(f,Ff):(X,FX) \rightarrow (Y,FY)$ in $(F,\mathbf{C},\mathbf{D})$ is Cartesian over $f:X \rightarrow Y$ in $\mathbf{C}$ since $P(f,Ff)=f$ and every $(g,Fg):(Z,FZ) \rightarrow (Y,FY)$ in $(F,\mathbf{C},\mathbf{D})$ for which we  have $P(g,Fg)=fh$ for some $h:P(Z,FZ) \rightarrow X$, uniquely determines an $(h,Fh):(Z,FZ) \rightarrow (X,FX)$ in $(F,\mathbf{C},\mathbf{D})$ above $h$ with $(f,Ff)\circ(h,Fh)=(g,Fg)$.
	
	\begin{equation*}
	\xymatrix{ FZ \ar[rd]_{Fh} \ar[rdrr]^{Fg = Ff \circ Fh} & & \\
		& FX \ar[rr]_{Ff} & & FY     \\
		Z \ar[rd]_{h} \ar[rdrr]^{g = fh} & & \\
		& X \ar[rr]_{f} & & Y } 
	\qquad
	\xymatrix{ (Z,FZ) \ar@{|->}[dd] \ar@{.>}[rd]_{(h,Fh)} \ar[rdrr]^{(g,Fg)} & & \\
		& (X,FX) \ar@{|->}[dd] \ar[rr]_{(f,Ff)} & & (Y,FY) \ar@{|->}[dd] \\
		P(Z,FZ) \ar[rd]_{h} \ar[rdrr]^{fh = P(g,Ff)}  & & \\
		& X \ar[rr]_{f} & & Y }
	\end{equation*}
	
	Existence is guaranteed since the arrow $(g,Fg)$ above $fh = g$ is in one-to-one correspondence with arrow $Fg$ on the left the way we have defined our arrows in $(F,\mathbf{C},\mathbf{D})$. Thus corresponding to a well defined $Fh$ we have $(h,Fh)$ above ${h}$. Uniqueness is also guaranteed as there is no other arrow above ${h}$ because $Fh$ is the only image arrow of $h$ guaranteed by the definition of 1-functor $F$ proving that the arrow above ${f}$ is indeed Cartesian. This argument holds for all the arrows of $(F,\mathbf{C},\mathbf{D})$. Thus every arrow in the constructed category acts as a Cartesian lift for the underlying arrow.
	
	Thus functor $P:(F,\mathbf{C},\mathbf{D}) \rightarrow \mathbf{C}$ is indeed a fibration (or a fibred category) since for every $(Y,FY) \in (F,\mathbf{C},\mathbf{D})$ and $f:X \rightarrow P(Y,FY)$ in $\mathbf{C}$, there is a Cartesian morphism $(f,Ff):(X,FX) \rightarrow (Y,FY)$ in $(F,\mathbf{C},\mathbf{D})$ above $f$. It is split since the cleavage satisfies the standard splitting conditions $\gamma(\mathrm{id}_{Y},(Y,FY)) = \mathrm{id}_{(Y,FY)}$ and $\gamma(g,(Z,FZ)) \circ \gamma(f,(Y,FY)) = \gamma(g \circ f,(Z,FZ))$ both of which are a consequence of the fact that ${F}$ is not pseudo but rather a strict functor. In a similar fashion it can be proved that $P:(F,\mathbf{C},\mathbf{D}) \rightarrow \mathbf{C}$ is also a split opfibration using the fact that for every $(X,FX) \in (F,\mathbf{C},\mathbf{D})$ and $f:P(Y,FY) \rightarrow Y$ in $\mathbf{C}$, there is a opCartesian morphism $(f,Ff):(X,FX) \rightarrow (Y,FY)$ in $(F,\mathbf{C},\mathbf{D})$ above $f$.
	
\end{proof}

Thus when $(F,\mathbf{C},\mathbf{D})$ is viewed as fibred on $\mathbf{C}$ it appears as shown in Figure~\ref{fig:1fib}.
\begin{figure}[!h]
	\begin{equation*}
	\xymatrix{
		(X,FX) \ar@/^1pc/[rrrr]^{(g \circ f,FgFf)} \ar@{>}[rr]_{(f,Ff)} &  & (Y,FY) \ar@{>}[rr]_{(g,Fg)}  & & (Z,FZ) \\
		(X,FX) \ar[u]^{(id_X,id_{FX})} \ar@{>}[rr]^{(f,Ff)}  & & (Y,FY) \ar[u]^{(id_Y,id_{FY})} \ar@{>}[rr]^{(g,Fg)} & & (Z,FZ) \ar[u]_{(id_Z,id_{FZ})} \\
		X \ar@/^1pc/[rrrr]^{g \circ f} \ar@(dl,ul)[]|{id_X} \ar[rr]_{f} &  & Y \ar@(dr,ur)[]|{id_Y} \ar[rr]_{g} & &  Z \ar@(dr,ur)[]|{id_Z}
	}
	\end{equation*}
	\caption{$(F,\mathbf{C},\mathbf{D})$ with trivial categories as fibres, on $\mathbf{C}$}
	\label{fig:1fib}
\end{figure}
Once we make the choice of pullbacks (note in this case the pullbacks are not just unique up-to vertical isomorphism but actually unique as there is single object in each fibre) we can observe that indeed $P:(F,\mathbf{C},\mathbf{D}) \rightarrow \mathbf{C}$ becomes a split fibration.

Here every map $f:X \rightarrow Y$ in $\mathbf{C}$ determines a functor $f^*$ in a reverse direction from the whole fibre on $Y$ to the whole fibre on $X$. Such a functor in fibration theory is referred to as \textbf{change-of-base} or \textbf{pullback} functor.
\begin{figure}[!h]
	\begin{equation*}
	\xymatrix@R=0.4in @C=1.2in{
		f^*(Y,FY) = (X,FX) \ar@{.>}[r]^{\bar{f}(Y,FY)=(f,Ff)} \ar@{-->}[d]_{f^*(id_{(Y,FY)})} & (Y,FY) \ar@{->}[d]^{id_{(Y,FY)}} \\
		f^*(Y,FY) = (X,FX) \ar@{.>}[r]_{\bar{f}(Y,FY)=(f,Ff)} & (Y,FY) \\
		X \ar[r]^{f} & Y
	}
	\end{equation*}
	\caption{{Pullback functor $f^*$ corresponding to $f$ in fibration $P:(F,\mathbf{C},\mathbf{D}) \rightarrow \mathbf{C}$}}
	\label{fig:pullfunc}
\end{figure} 
This fibration viewpoint crucially implies thinking of ordinary objects along with their identity as trivial categories which is explained in Section~\ref{act}.

However the Definition~\ref{defn1} of abstract graph of a functor motivates us to characterize a related functor $\mathbb{F}: \mathbf{C} \rightarrow \mathbf{D} \rightarrow \mathbf{Set}$ as its concrete version utilizing 2-category nature of $\mathbf{Set}$.

\begin{definition} [Concrete graph of a functor]
	\label{defn2}
	Consider a (covariant) functor $F:\mathbf{C}\rightarrow \mathbf{D}$ with $(\mathbf{D}, U)$ being a concrete category over $\mathbf{Set}$ or a faithful $U:\mathbf{D} \rightarrow \mathbf{Set}$ with $\mathbb{F} = U \circ F$. Then $(\mathbb{F},\mathbf{C},\mathbf{Set})$ is a category consisting of:
	\begin{itemize}
		\item \textbf{objects}: the pairs $(X,x)$ where $X \in \mbox{Ob}(\mathbf{C})$ and $x \in {\mathbb{F}}X = U({F}X)$
		
		\item \textbf{morphisms}: pairs $(f,\mathbb{F}f|_{x}): (X,x) \rightarrow (Y,y)$ where $f:X \rightarrow Y \in \mathbf{C}$, $y = {\mathbb{F}}f(x)$
		
		\item \textbf{identity}: for $(X,x)$, the morphism $\mathrm{id}_{(X,x)} = (\mathrm{id}_{X},\mathrm{id}_{\mathbb{F}X}|_{x})$
		
		\item \textbf{composition}: $(g,\mathbb{F}g|_{y}) \bullet (f,\mathbb{F}f|_{x}) = (g \circ f,\mathbb{F}(g \circ f)|_{x})$
		
		\item \textbf{unit laws}: for $(f,\mathbb{F}f|_{x})$, 
		$(\mathrm{id}_{Y},\mathrm{id}_{\mathbb{F}Y}|_{y}) \bullet (f,\mathbb{F}f|_{x}) = (f,\mathbb{F}f|_{x}) = (f,\mathbb{F}f|_{x}) \bullet (\mathrm{id}_{X},\mathrm{id}_{\mathbb{F}X}|_{x})$    
		
		\item \textbf{associativity}:  $(h,\mathbb{F}h|_{z})\bullet ((g,\mathbb{F}f|_{y})\bullet (f,\mathbb{F}f|_{x}))=((h,\mathbb{F}h|_{z})\bullet (g,\mathbb{F}f|_{y}))\bullet (f,\mathbb{F}f|_{x}) =(h,\mathbb{F}h|_{z})\bullet (g,\mathbb{F}f|_{y})\bullet (f,\mathbb{F}f|_{x})$
	\end{itemize}
\end{definition}

This category will now be proved only as opfibration in contrast to the dual nature of $P:(F,\mathbf{C},\mathbf{D}) \rightarrow \mathbf{C}$ as both fibration and opfibration. 

\begin{proposition}
	\label{prop3}	
	Let $P:(\mathbb{F},\mathbf{C},\mathbf{Set}) \rightarrow \mathbf{C}$ be the usual first projection functor. Then $P$ is also an split opfibration.
	
\end{proposition}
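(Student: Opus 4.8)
The plan is to run the proof of Proposition~\ref{prop2} in the opfibration direction only; the Cartesian direction will fail precisely because the fibres $\mathbb{F}X = U(FX)$ are genuine sets and the transition maps $\mathbb{F}f$ need not be surjective. First I would fix the first projection $P:(\mathbb{F},\mathbf{C},\mathbf{Set}) \rightarrow \mathbf{C}$ with $P(X,x) := X$ on objects and $P(f,\mathbb{F}f|_{x}) := f$ on morphisms; this is a functor because, by Definition~\ref{defn2}, composition and identities in $(\mathbb{F},\mathbf{C},\mathbf{Set})$ are formed component-wise in the first slot, so $P$ preserves both.

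Next, for the opfibration property I would take an object $(X,x)$ and an arrow $u:X\rightarrow J$ in $\mathbf{C}$, and propose the candidate opCartesian lift
\[
(u,\mathbb{F}u|_{x}):(X,x)\longrightarrow (J,\mathbb{F}u(x)),
\]
which is a legitimate morphism of $(\mathbb{F},\mathbf{C},\mathbf{Set})$ by Definition~\ref{defn2} and clearly satisfies $P(u,\mathbb{F}u|_{x})=u$. To verify it is opCartesian I would take an arbitrary morphism $(v,\mathbb{F}v|_{x}):(X,x)\rightarrow (K,k)$ (so $k=\mathbb{F}v(x)$) with $P(v,\mathbb{F}v|_{x})=v=w\circ u$ for some $w:J\rightarrow K$ in $\mathbf{C}$, and exhibit the filler $(w,\mathbb{F}w|_{\mathbb{F}u(x)})$. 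The one thing needing checking is that the filler has the right codomain: its codomain is $(K,\mathbb{F}w(\mathbb{F}u(x)))$, and $\mathbb{F}w(\mathbb{F}u(x))=\mathbb{F}(w\circ u)(x)=\mathbb{F}v(x)=k$ because $\mathbb{F}=U\circ F$ is a strict functor. Then $(w,\mathbb{F}w|_{\mathbb{F}u(x)})\bullet (u,\mathbb{F}u|_{x})=(w\circ u,\mathbb{F}(w\circ u)|_{x})=(v,\mathbb{F}v|_{x})$ and $P(w,\mathbb{F}w|_{\mathbb{F}u(x)})=w$, exactly as required. For uniqueness I would observe that, by the form of morphisms in Definition~\ref{defn2}, a morphism of $(\mathbb{F},\mathbf{C},\mathbf{Set})$ out of a fixed object $(J,\mathbb{F}u(x))$ is completely determined by its first component, so $(w,\mathbb{F}w|_{\mathbb{F}u(x)})$ is the only arrow over $w$ with that source and the filler is forced. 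Hence every $(u,\mathbb{F}u|_{x})$ is opCartesian and $P$ is an opfibration. Splitness then follows by taking the opcleavage $\gamma(u,(X,x)):=(u,\mathbb{F}u|_{x})$: the conditions $\gamma(\mathrm{id}_{X},(X,x))=\mathrm{id}_{(X,x)}$ and $\gamma(v,(J,\mathbb{F}u(x)))\bullet\gamma(u,(X,x))=\gamma(v\circ u,(X,x))$ are immediate from $\mathbb{F}(\mathrm{id})=\mathrm{id}$ and $\mathbb{F}(v\circ u)=\mathbb{F}v\circ\mathbb{F}u$, i.e. from strictness of $\mathbb{F}$.

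I do not expect a real obstacle here; the argument is pure bookkeeping once one notices that the discreteness of the fibres makes every lift rigid. The only conceptually substantive point — worth a one-line remark rather than a proof step — is the asymmetry with Proposition~\ref{prop2}: a Cartesian lift of $u:I\rightarrow J$ over an object $(J,y)$ would demand some $x\in\mathbb{F}I$ with $\mathbb{F}u(x)=y$, and since $\mathbb{F}u$ need not be surjective no such lift need exist; this is exactly why $(\mathbb{F},\mathbf{C},\mathbf{Set})$ is only an opfibration whereas $(F,\mathbf{C},\mathbf{D})$, whose fibres are single objects, is both a fibration and an opfibration.
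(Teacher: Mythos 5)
Your proposal is correct and follows essentially the same route as the paper: every morphism $(f,\mathbb{F}f|_{x})$ is shown to be opCartesian, with existence of the filler coming from functoriality of $\mathbb{F}=U\circ F$ and uniqueness from the fact that a morphism of $(\mathbb{F},\mathbf{C},\mathbf{Set})$ is determined by its source and first component, and splitness from strictness of $\mathbb{F}$. Your closing remark on why the Cartesian direction fails (non-surjectivity of $\mathbb{F}u$ on fibres) is a worthwhile observation that the paper only gestures at, but it does not change the substance of the argument.
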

\begin{proof}
	Let $P:(\mathbb{F},\mathbf{C},\mathbf{Set}) \rightarrow \mathbf{C}$ be the first projection functor where $P(X,x) := X$ for all objects $(X,x) \in Ob((\mathbb{F},\mathbf{C},\mathbf{Set}))$ and $P(f,\mathbb{F}f|_{x}) := f$ for all morphisms $(f,\mathbb{F}f|_{x}) \in mor((\mathbb{F},\mathbf{C},\mathbf{Set}))$. 
	
	The morphism $(f,\mathbb{F}f|_{x}):(X,x) \rightarrow (Y,y)$ in $(\mathbb{F},\mathbf{C},\mathbf{Set})$ is opCartesian over $f:X \rightarrow Y$ in $\mathbf{C}$ since $P(f,\mathbb{F}f|_{x})=f$ and every $(g,\mathbb{F}g|_{x}):(X,x) \rightarrow (Z,z)$ in $(\mathbb{F},\mathbf{C},\mathbf{Set})$ for which we  have $P(g,\mathbb{F}g|_{x})=hf$ for some $h:Y \rightarrow P(Z,z)$, uniquely determines an $(h,\mathbb{F}h|_{y}):(Y,y) \rightarrow (Z,z)$ in $(\mathbb{F},\mathbf{C},\mathbf{Set})$ above $h$ with $(h,\mathbb{F}f|_{y})\circ(f,\mathbb{F}f|_{x})=(g,\mathbb{F}g|_{x})$.
	
	\begin{equation*}
	\xymatrix{ & & & z \\
		x \ar[rr]_{\mathbb{F}f|_{x}} \ar[rurr]^{(\mathbb{F}g|_{x})} & & y \ar@{.>}[ru]_{(\mathbb{F}h|_{y})}    \\
		& & & Z \\
		X \ar[rr]_{f} \ar[rurr]^{h \circ f =g} & & Y \ar[ru]_{h} } 
	\qquad
	\xymatrix{ & & &  (Z,z) \ar@{|->}[dd]  \\
		(X,x) \ar@{|->}[dd] \ar[rurr]^{(g,\mathbb{F}g|_{x})} \ar[rr]_{(f,\mathbb{F}f|_{x})} & & (Y,y) \ar@{|->}[dd]  \ar@{.>}[ru]_{(h,\mathbb{F}h|_{y})} \\
		& & & P(Z,z)   \\
		X \ar[rr]_{f} \ar[rurr]^{h \circ f =P(g,\mathbb{F}g|_{x})} & & Y \ar[ru]_{h} }
	\end{equation*}
	
	The existence is guaranteed since the arrow $(g,\mathbb{F}g|_{x})$ above $hf = g$ is in one-to-one correspondence with arrow $\mathbb{F}g|_{x}$ on the left where $\mathbb{F}g|_{x} = \mathbb{F}(h \circ f)|_{x}$ using the fact that $\mathbb{F}$ is a usual functor. Thus corresponding to a well defined $\mathbb{F}h|_{y}$ we have $(h,\mathbb{F}h|_{y})$ above ${h}$. The uniqueness is guaranteed as there is no other arrow above ${h}$ with domain as $(Y,y)$ since $\mathbb{F}h|_{y}$ is the unique restriction of function or functor $\mathbb{F}h$ to element or object $y$, proving that the arrow above ${f}$ is indeed opCartesian. This argument holds for all the arrows of $(\mathbb{F},\mathbf{C},\mathbf{Set})$. Thus every arrow in the constructed category acts as a opCartesian lift for the underlying arrow.
	
	Thus functor $P:(\mathbb{F},\mathbf{C},\mathbf{Set}) \rightarrow \mathbf{C}$ is indeed an opfibration since for every $(X,x) \in (\mathbb{F},\mathbf{C},\mathbf{Set})$ and $f:P(X,x) \rightarrow Y$ in $\mathbf{C}$, there is a opCartesian morphism $(f,\mathbb{F}f|_{x}):(X,x) \rightarrow (Y,y)$ in $(\mathbb{F},\mathbf{C},\mathbf{Set})$ above $f$. It is split since the opcleavage satisfies the splitting conditions $\kappa(\mathrm{id}_{X},(X,x)) = \mathrm{id}_{(X,x)}$ and $\kappa(g,(Y,y)) \circ \kappa(f,(X,x)) = \kappa(g \circ f),(X,x))$ both of which are a consequence of the fact that $\mathbb{F}$ is not just pseudo but a strict 2-functor where $\mathbf{Set}$ is treated as a 2-category.  
	
\end{proof}	

\subsection{Remarks on relationship of $(F,\mathbf{C},\mathbf{D})$ to Graph(F)}
\label{sec:graph}
As mentioned earlier, there is an existing notion of graph of a functor \cite{graph} within category theory denoted as $Graph(F)$ which might cause confusion to the reader. The existing graph of a functor is defined using the notions of profunctor and subobject classifier in a topos theoretic way. For a succinct and intuitive overview of topos theory; see \cite{leinstertopos} and references therein. More precisely, the graph of $F$ is the fibration $Graph(F) \rightarrow  \mathbf{C}^{op} \times \mathbf{D}$ classified by $\chi_F$. Using this definition \cite{graph} one can easily recover the ordinary notion of graph of a function as a subset of set $X \times Y$; however the notion of graph of any structure preserving map is itself an object with that structure is not recoverable by this notion since like $\mathbf{Set}$ every category need not have a sub-object classifier. Indeed $(F,\mathbf{C},\mathbf{D})$ as a category the way we defined cannot be recovered since $\mathbf{Cat}$ is not a topos. Hence if the reader wishes to interpret $(F,\mathbf{C},\mathbf{D})$ as the graph of a functor then appropriate care needs to be taken.

\section{Transformation categories: $\int_{\mathbf{C}^{op}} \bar{\mathbf{F}}$,$\int_{\mathbf{C}^{op}} \bar{\mathbb{F}}$, $\mathcal{X} \rtimes_{\mathbf{F}} \mathbf{C}$ , $\mathcal{X} \rtimes_{\mathbb{F}} \mathbf{C}$.}
\label{act}
In previous Section~\ref{fcd} we saw that $(F,\mathbf{C},\mathbf{D})$ was constructed as a graph of a functor and shown as fibred category. However this motivates us to seek a corresponding contravariant functor $\Psi:\mathbf{C} \rightarrow \mathbf{Cat}$ thought of as covariant functor $\bar{\Psi}:\mathbf{C}^{op} \rightarrow \mathbf{Cat}$ whose Grothendieck construction must yield a fibred category at least equivalent to $(F,\mathbf{C},\mathbf{D})$ or possibly its opposite. This corresponding functor as it turns out is precisely the contravariant functor $\bar{\mathbf{F}}: \mathbf{C}^{op} \rightarrow \mathbf{Cat}$ which is constructed from $\bar{F}:\mathbf{C}^{op} \rightarrow \mathbf{D}$ by trivial categorification of $\mathbf{D}$ objects or more precisely by post composing with trivial inclusion functor $I$ to form $\bar{\mathbf{F}} = I \circ \bar{F}$ as explained in this section. 

\subsection{Trivial categorification of $\mathbf{D}$ objects}
\label{trivcat}
To be able do Grothendieck construction on ordinary functor $F: \mathbf{C} \rightarrow \mathbf{D}$ we need a functor $I: \mathbf{D} \rightarrow \mathbf{Cat}$. The recipe for definition of this functor is intuitively suggested by Figure~\ref{fig:pullfunc} using classic pullback interpretation of a fibration. As shown then in Figure~\ref{fig:trivialcat} the object $FX$ is mapped to a category $I(FX)$ which consists of a single object $FX$ with its identity arrow $id_{FX}$. Then every morphism $Ff:FX \rightarrow FY$ of the category $\mathbf{D}$  is naturally mapped to a functor $I(Ff)$. The commutative diagram is the unit law in $\mathbf{D}$. This we term {\bf trivial categorification} in the sense that every object is treated as trivial category while every arrow correspondingly gives rise to a functor. Note that this can be done unambiguously for every object of any arbitrary category since for every object $FX$ the identity arrow ${id_{FX}}$ uniquely exists setting up a definition of trivial category $\mathbf{F}X = I(FX)$. Next for every arrow $Ff: FX \rightarrow FY$ the well-defined domain and codomain sets up a definition for a functor $I(Ff) = \mathbf{F}f: \mathbf{F}X \rightarrow \mathbf{F}Y$  (one can easily verify functor axioms) sending the domain trivial category to codomain trivial category and the corresponding commutative diagram always holds as a unit law in the original category $\mathbf{D}$. Note that $I$ indeed satisfies axioms of a functor.    

\begin{figure}[!h]
	\begin{equation*}
	\xymatrix@R=0.4in @C=0.8in{
		FX \ar@{.>}[r]^{Ff} \ar@{>}[d]_{id_{FX}} & I(Ff)(FX)=FY \ar@{>}[d]^{{I(Ff)(id_{FX}})=id_{FY}} \\
		FX \ar@{.>}[r]_{Ff} & I(Ff)(FX)=FY \\
		FX \ar[r]^{Ff} & FY
	}
	\end{equation*}
	\caption{Objects $FX,FY$ mapped to trivial categories and morphisms $Ff$ mapped to functors by $I$.}
	\label{fig:trivialcat}
\end{figure}

Thus corresponding to $F: \mathbf{C} \rightarrow \mathbf{D}$, we have $\mathbf{F}: \mathbf{C} \rightarrow \mathbf{Cat}$ as a well defined functor with $\mathbf{F} = I \circ F$. The terminology categorification is in line with its well-studied usage~\cite{baezcat} in context of replacing sets with categories and functions with functors, since here we are replacing objects (either abstract or concrete such as Hilbert spaces, smooth manifolds etc.) with categories while morphisms are replaced with functors.

\subsection{Right action induced by a functor}   
\label{rightact}
The basic duality fact in~\cite{CWM} states that every contravariant functor could be written as covariant using the concept of opposite category. This can be utilized in a reverse way to express a covariant functor as a contravariant functor. The contravariant form is essential since the original version of Grothendieck construction~\cite{SGA1} is on a contravariant pseudofunctor and the same version is utilized by~\cite{BJ} in the context of categorical logic. As in~\cite{SGA1} let us denote dual arrows $f^{op}$ by $f^{\circ}$ in diagrams for convenience.

\begin{lemma}\cite{CWM}
	\label{contra} 
	To every (covariant) functor $F: \mathbf{C} \rightarrow \mathbf{D}$ we can always associate a corresponding (contravariant) functor $\bar{F}: \mathbf{C}^{op} \rightarrow \mathbf{D}$.
\end{lemma}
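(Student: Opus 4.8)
The plan is to construct $\bar F$ explicitly from the given covariant $F$ and the canonical duality isomorphism, and then check the (contravariant) functoriality axioms componentwise. First I would recall the standard fact that a category $\mathbf{C}$ and its opposite $\mathbf{C}^{op}$ share the same objects, while each arrow $f : X \to Y$ in $\mathbf{C}$ corresponds to a unique arrow $f^{\circ} : Y \to X$ in $\mathbf{C}^{op}$, with $(\mathrm{id}_X)^{\circ} = \mathrm{id}_X$ and $(g \circ f)^{\circ} = f^{\circ} \circ g^{\circ}$; this is the content of the basic duality in \cite{CWM}, and it is what makes $\mathbf{C} \mapsto \mathbf{C}^{op}$ an (identity-on-nothing) involution.

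Next I would define $\bar F : \mathbf{C}^{op} \to \mathbf{D}$ on objects by $\bar F(X) := F(X)$ and on arrows by $\bar F(f^{\circ}) := F(f)$, where $f^{\circ} : Y \to X$ is the arrow of $\mathbf{C}^{op}$ dual to $f : X \to Y$ in $\mathbf{C}$. I would then verify that $\bar F$ is a contravariant functor, i.e. that it reverses composites and preserves identities. Preservation of identities is immediate: $\bar F(\mathrm{id}_X^{\circ}) = \bar F(\mathrm{id}_X) = F(\mathrm{id}_X) = \mathrm{id}_{FX} = \mathrm{id}_{\bar F X}$. For composites, given $f : X \to Y$ and $g : Y \to Z$ in $\mathbf{C}$, the corresponding $\mathbf{C}^{op}$-arrows are $f^{\circ} : Y \to X$ and $g^{\circ} : Z \to Y$, whose composite \emph{in} $\mathbf{C}^{op}$ is $f^{\circ} \circ g^{\circ} = (g \circ f)^{\circ} : Z \to X$; applying $\bar F$ gives $\bar F(f^{\circ} \circ g^{\circ}) = \bar F((g \circ f)^{\circ}) = F(g \circ f) = F(g) \circ F(f) = \bar F(g^{\circ}) \circ \bar F(f^{\circ})$, which is exactly the contravariance requirement. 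Thus $\bar F$ is a well-defined contravariant functor associated to $F$.

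There is essentially no hard obstacle here; the statement is a packaging of the duality principle, and the only thing to be careful about is bookkeeping of variance — making sure that ``contravariant functor $\mathbf{C}^{op} \to \mathbf{D}$'' is read as an ordinary covariant functor out of $\mathbf{C}^{op}$ (equivalently, a contravariant functor out of $\mathbf{C}$), so that the diagram-level convention $f \mapsto f^{\circ}$ is applied consistently and no double dualization slips in. I would also remark in passing that the construction is symmetric: the same recipe applied to $\bar F$ recovers $F$, and one may equally well phrase the lemma as the bijection $\mathrm{Fun}(\mathbf{C},\mathbf{D}) \cong \mathrm{Fun}(\mathbf{C}^{op},\mathbf{D})_{\mathrm{contra}}$, which is the form actually used later when passing to $\bar{\mathbf{F}} = I \circ \bar F$ for the Grothendieck construction.
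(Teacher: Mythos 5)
Your proposal is correct and follows essentially the same route as the paper: define $\bar F$ to agree with $F$ on objects, send the dual arrow $f^{\circ}$ to $Ff$, and observe that the functor axioms hold with the order of composition reversed. Your verification of the composition law is slightly more explicit than the paper's, and your closing remark about the variance bookkeeping matches the paper's own caveat that $\bar F$ is ``only symbolic'' and reverses composites, so there is nothing to correct.
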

\begin{proof}
	Consider the functor $F: \mathbf{C} \rightarrow \mathbf{D}$. By definition it assigns to each object $X \in \mathbf{C}$ an object $FX \in \mathbf{D}$ and to each arrow $f:X \rightarrow Y \in \mathbf{C}$ an arrow $Ff:FX \rightarrow FY \in \mathbf{D}$ with $F(g \circ f) = (Fg)\bullet(Ff)$ whenever $g \circ f$ is defined. Now we write $\bar{F}f^{op}$ for $Ff$ and $\bar{F}X = {F}X$ ; then one can define a functor $\bar{F}$ which is contravariant from $\mathbf{C}^{op}$ to $\mathbf{D}$ assigning to each object $X \in \mathbf{C}^{op}$ an object $\bar{F}X \in \mathbf{D}$ and to each arrow $f^{op}:Y \rightarrow X$ an arrow $\bar{F}f^{op}:\bar{F}X \rightarrow \bar{F}Y$ (in the opposite direction) all in such a way that $\bar{F}(\mathrm{id}_X) = \mathrm{id}_{\bar{F}X}$ and $\bar{F}(f^{op} \circ g^{op}) = (\bar{F}g^{op})\bullet(\bar{F}f^{op})$ whenever the composite $f^{op} \circ g^{op}$ is defined in $\mathbf{C}^{op}$. Thus the contravariant functor inverts the order of composition as explained in \cite{CWM}.  
	\[
	\begin{gathered}
	\xymatrix{
		FX \ar[r]^{Ff} &
		FY \ar[r]^{Fg}  &
		FZ  \\
		X \ar[r]_{f} \ar[u]_{F} &
		Y \ar[r]_{g} \ar[u]^{F} &
		Z \ar[u]^{F}
	}
	\end{gathered}
	\qquad
	\begin{gathered}
	\xymatrix{
		\bar{F}X=FX \ar[r]^{Ff=\bar{F}f^{\circ}} &
		\bar{F}Y=FY \ar[r]^{Fg=\bar{F}g^{\circ}}  &
		\bar{F}Z=FZ  \\
		X  \ar[u]_{\bar{F}} &
		Y \ar[l]^{f^{\circ}} \ar[u]^{\bar{F}} &
		Z \ar[l]^{g^{\circ}} \ar[u]^{\bar{F}}
	}
	\end{gathered}
	\]
\end{proof}

The functor $\bar{F}: \mathbf{C}^{op} \rightarrow \mathbf{D}$ as we have defined is only symbolic on objects of the original category and in general is undefined on $\mathbf{C}$. Only in the case of categories such as groups, groupoids, partial monic (which are in essence partial groupoids) etc; the opposite category is isomorphic to original category i.e $\mathbf{C}^{op} \cong \mathbf{C}$. In such a case we can meaningfully write a contravariant functor $\bar{F}: \mathbf{C} \rightarrow \mathbf{D}$ and therefore the category $\int_{\mathbf{C}}\bar{\mathbf{F}}$ could be obtained using the Grothendieck construction. More often is the case of equivalence $\mathbf{C}^{op} \cong \mathbf{E}$ where opposite category is equivalent to some other category $\mathbf{E}$ rather than original $\mathbf{C}$.

Lemma~\ref{contra} together with $\mathbf{F} = I \circ F$ immediately suggests Grothendieck construction in case of ordinary contravariant functors (to ordinary category $\mathbf{D}$ instead of $\mathbf{Cat}$). Given an ordinary covariant functor ${F}: \mathbf{C} \rightarrow \mathbf{D}$, we first think of it as a contravariant functor $\bar{F}: \mathbf{C}^{op} \rightarrow \mathbf{D}$. This lets us define Grothendieck construction on it by using $\bar{\mathbf{F}}: \mathbf{C}^{op} \rightarrow \mathbf{Cat}$ or more precisely the category $\int_{\mathbf{C}^{op}} \bar{\mathbf{F}}$ where $\bar{\mathbf{F}} = I \circ \bar{F}$. This is in disguise \textbf{left action} of $\mathbf{C}^{op}$ which through duality can be interpreted as right action of $\mathbf{C}$. However unless $\mathbf{C}$ is isomorphic to its opposite $\mathbf{C}^{op}$ the right action and left action categories are in general not isomorphic; see Section~\ref{dual}.

\begin{definition}[Abstract Right action induced by a functor]
	\label{defn3}
	Consider a strict contravariant functor $\bar{F}: \mathbf{C}^{op} \rightarrow \mathbf{D}$ between small categories thought of as $\bar{\mathbf{F}}: \mathbf{C}^{op} \rightarrow \mathbf{Cat}$ (with $\bar{\mathbf{F}} = I \circ \bar{F}$ and $I:\mathbf{D} \rightarrow \mathbf{Cat}$ as defined). Then Grothendieck construction of $\bar{\mathbf{F}}$ is a category $\int_{\mathbf{C}^{op}} \bar{\mathbf{F}}$ with

	\begin{itemize}
		\item \textbf{objects}: the pairs $(X,\bar{F}X)$ where $X \in \mbox{Ob}(\mathbf{C}^{op})$ and $\bar{F}X \in \mbox{Ob}(\mathbf{D})$
		
		\item \textbf{morphisms}: ${\int_{\mathbf{C}^{op}} \bar{\mathbf{F}}}((Y,\bar{F}Y),(X,\bar{F}X))$ are pairs $(f^{\circ},\mathrm{id}_{\bar{F}Y})$ where $f^{\circ}:Y \rightarrow X \in \mathbf{C}^{op}$  $\mathrm{id}_{\bar{F}Y}:\bar{F}Y \rightarrow \bar{\mathbf{F}}f^{\circ}(\bar{F}X)$
		
		\item \textbf{identity}: for $(X,\bar{F}X)$, the morphism $\mathrm{id}_{(X,\bar{F}X)} = (\mathrm{id}_{X},\mathrm{id}_{\bar{F}X})$
		
		\item \textbf{composition}: $(f^{\circ},\mathrm{id}_{\bar{F}Y}) \bullet (g^{\circ},\mathrm{id}_{\bar{F}Z}) = (f^{\circ} \circ g^{\circ},\bar{\mathbf{F}}g^{\circ}(\mathrm{id}_{\bar{F}Y}) \cdot \mathrm{id}_{\bar{F}Z}) = (f^{\circ} g^{\circ},\mathrm{id}_{\bar{F}Z})$ since $\bar{\mathbf{F}}g^{\circ}(\mathrm{id}_{\bar{F}Y}) :  \bar{\mathbf{F}}g^{\circ}\bar{F}Y \rightarrow \bar{\mathbf{F}}g^{\circ}\bar{\mathbf{F}}f^{\circ}(\bar{F}X)$ 
		
		\item \textbf{unit laws}: for $(f^{\circ},\mathrm{id}_{\bar{F}Y})$, 
		$(\mathrm{id}_{X},\mathrm{id}_{\bar{F}X}) \bullet (f^{\circ},\mathrm{id}_{\bar{F}Y}) = (f^{\circ},\mathrm{id}_{\bar{F}Y}) = (f^{\circ},\mathrm{id}_{\bar{F}Y}) \bullet (\mathrm{id}_{Y},\mathrm{id}_{\bar{F}Y})$    
		
		\item \textbf{associativity}:  $(f^{\circ},\mathrm{id}_{\bar{F}Y})\bullet ((g^{\circ},\mathrm{id}_{\bar{F}Z})\bullet (h^{\circ},\mathrm{id}_{\bar{F}W}))=((f^{\circ},\mathrm{id}_{\bar{F}Y})\bullet (g^{\circ},\mathrm{id}_{\bar{F}Z}))\bullet (h^{\circ},\mathrm{id}_{\bar{F}W}) =(f^{\circ},\mathrm{id}_{\bar{F}Y})\bullet (g^{\circ},\mathrm{id}_{\bar{F}Z})\bullet (h^{\circ},\mathrm{id}_{\bar{F}W})$
	\end{itemize}
\end{definition}
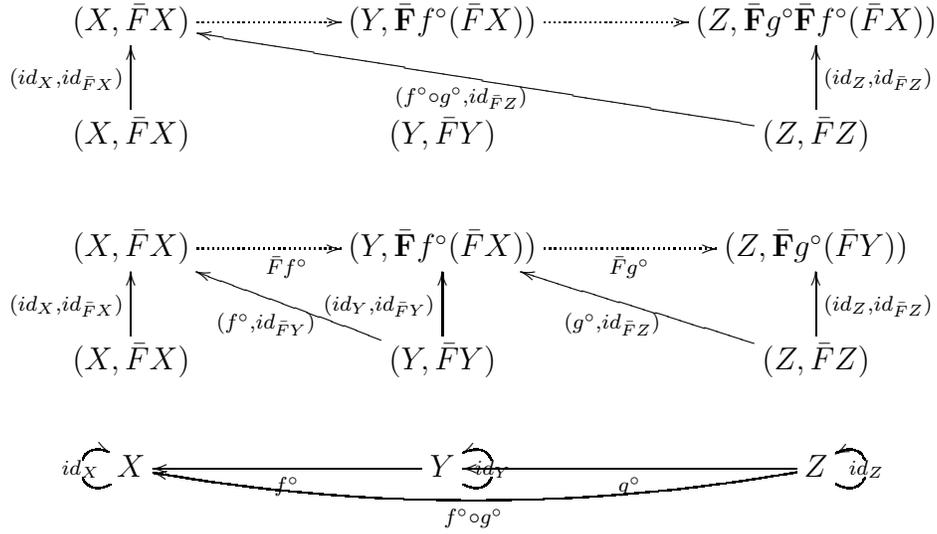
\begin{figure}[!h]
	\begin{equation*}
	\xymatrix{
		(X,\bar{F}X) \ar@{.>}[rr]  &  & (Y,\bar{\mathbf{F}}f^{\circ}(\bar{F}X)) \ar@{.>}[rr]  & & (Z, \bar{\mathbf{F}}g^{\circ}\bar{\mathbf{F}}f^{\circ}(\bar{F}X))  \\
		(X,\bar{F}X) \ar[u]^{(id_X,id_{\bar{F}X})}  & & (Y,\bar{F}Y)  & & (Z,\bar{F}Z) \ar[u]_{(id_Z,id_{\bar{F}Z})} \ar@{>}[llllu]^{(f^{\circ} \circ g^{\circ},id_{\bar{F}Z})} \\
		(X,\bar{F}X) \ar@{.>}[rr]_{\bar{F}f^{\circ}}  &  & (Y,\bar{\mathbf{F}}f^{\circ}(\bar{F}X)) \ar@{.>}[rr]_{\bar{F}g^{\circ}}    & & (Z,\bar{\mathbf{F}}g^{\circ}(\bar{F}Y)) \\
		(X,\bar{F}X) \ar[u]^{(id_X,id_{\bar{F}X})}   & & (Y,\bar{F}Y) \ar[u]^{(id_Y,id_{\bar{F}Y})}  \ar@{>}[llu]^{(f^{\circ},id_{\bar{F}Y})} & & (Z,\bar{F}Z) \ar[u]_{(id_Z,id_{\bar{F}Z})} \ar@{>}[llu]^{(g^{\circ},id_{\bar{F}Z})} \\
		X  \ar@(dl,ul)[]|{id_X}  &  & Y \ar@(dr,ur)[]|{id_Y} \ar[ll]^{{f}^{\circ}} & &  Z \ar@(dr,ur)[]|{id_Z} \ar[ll]^{{g}^{\circ}} \ar@/^1pc/[llll]^{{f}^{\circ} \circ {g}^{\circ}}
	}
	\end{equation*}   
	\caption{$\int_{\mathbf{C}^{op}}\bar{\mathbf{F}}$ fibred on $\mathbf{C}^{op}$; dotted arrows show $\mathbf{D}$ morphisms as actions}
	\label{fig:13fib}
\end{figure}

The abstract right action immediately motivates us to define a concrete version given there is some underlying category of $\mathbf{D}$. We define this for $\mathbf{Set}$ (or a construct) however the case for any underlying category $\mathbf{X}$ should not be more difficult.    

\begin{definition}[Concrete Right action induced by a functor]
	\label{defn4}
	Consider a strict contravariant functor $\bar{F}: \mathbf{C}^{op} \rightarrow \mathbf{D}$ between small categories with $(\mathbf{D}, U)$ being a concrete category over $\mathbf{Set}$ or a faithful functor $U:\mathbf{D} \rightarrow \mathbf{Set}$. Then ${\bar{\mathbb{F}}} = U \circ \bar{F}$ and $\int_{\mathbf{C}^{op}} \bar{\mathbb{F}}$ is a category with

	\begin{itemize}
		\item \textbf{objects}: the pairs $(X,x)$ where $X \in \mbox{Ob}(\mathbf{C}^{op})$ and $x \in \bar{\mathbb{F}}X = U(\bar{F}X)$
		
		\item \textbf{morphisms}: pairs $(f^{\circ},y): (Y,y) \rightarrow (X,x)$ where $f^{\circ}:Y \rightarrow X \in \mathbf{C}^{op}$, $x = \bar{\mathbb{F}}f(y)$
		
		\item \textbf{identity}: for $(X,x)$, the morphism $\mathrm{id}_{(X,x)} = (\mathrm{id}_{X},x)$
		
		\item \textbf{composition}: $(f^{\circ},y) \bullet (g^{\circ},z) = (f^{\circ} \circ g^{\circ},{z})$ since $z = \bar{\mathbf{F}}g^{\circ}\bar{\mathbf{F}}f^{\circ}(x)$
		
		\item \textbf{unit laws}: for $(f^{\circ},y)$, 
		$(\mathrm{id}_{X},{x}) \bullet (f^{\circ},y) = (f^{\circ},y) = (f^{\circ},y) \bullet (\mathrm{id}_{Y},{y})$    
		
		\item \textbf{associativity}:  $(h^{\circ},{x})\bullet ((f^{\circ},{y})\bullet (g^{\circ},{z}))=((h^{\circ},{x})\bullet (f^{\circ},{y}))\bullet (g^{\circ},{z}) =(h^{\circ},{x})\bullet (f^{\circ},{y})\bullet (g^{\circ},{z})$
	\end{itemize}
\end{definition}

\begin{figure}[!h]
	\begin{equation*}
	\xymatrix{
		(X,x) \ar@{.>}[rr]  &  & (Y,\bar{\mathbf{F}}f^{\circ}(x)) \ar@{.>}[rr]  & & (Z, \bar{\mathbf{F}}g^{\circ}\bar{\mathbf{F}}f^{\circ}(x))  \\
		(X,x) \ar[u]^{(id_X,x)}  & & (Y,\bar{F}Y)  & & (Z,\bar{F}Z) \ar[u]_{(id_Z,z)} \ar@{>}[llllu]^{(f^{\circ} \circ g^{\circ},z)} \\
		(X,x) \ar@{.>}[rr]_{U({\bar{F}f^{\circ}})}  &  & (Y,\bar{\mathbf{F}}f^{\circ}(\bar{F}X)) \ar@{.>}[rr]_{U({\bar{F}g^{\circ}})}    & & (Z,\bar{\mathbf{F}}g^{\circ}(\bar{F}Y)) \\
		(X,x) \ar[u]^{(id_X,x)}   & & (Y,y) \ar[u]^{(id_Y,y)}  \ar@{>}[llu]^{(f^{\circ},y)} & & (Z,z) \ar[u]_{(id_Z,z)} \ar@{>}[llu]^{(g^{\circ},z)} \\
		X  \ar@(dl,ul)[]|{id_X}  &  & Y \ar@(dr,ur)[]|{id_Y} \ar[ll]^{{f}^{\circ}} & &  Z \ar@(dr,ur)[]|{id_Z} \ar[ll]^{{g}^{\circ}} \ar@/^1pc/[llll]^{{f}^{\circ} \circ {g}^{\circ}}
	}
	\end{equation*}   
	\caption{$\int_{\mathbf{C}}\bar{\mathbb{F}}$ fibred on $\mathbf{C}^{op}$; dotted arrows denote concrete functions as right actions}
	\label{fig:rightconcretefib}
\end{figure}
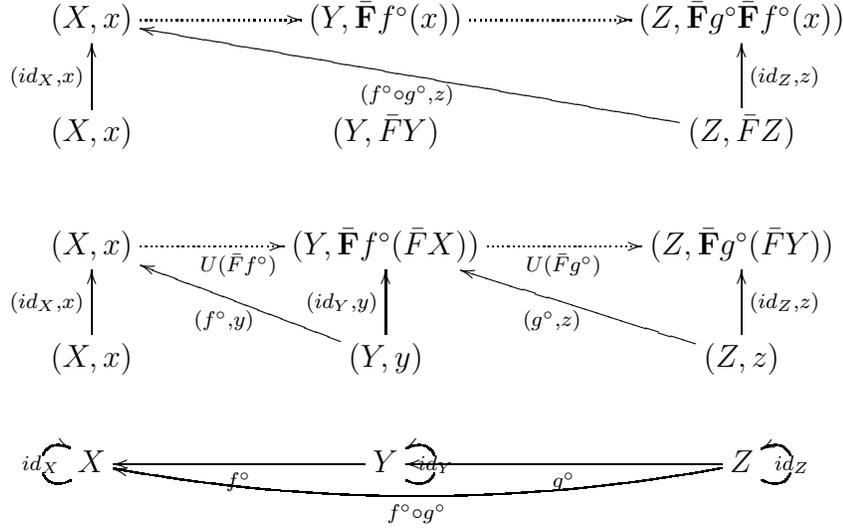

In a dual sense we define the abstract and concrete versions of left actions induced by a functor and then proceed to describe this duality in Section~\ref{dual}.

\subsection{Left action induced by a functor}
\label{leftact}

\begin{definition}[Abstract Left action induced by a functor]
	\label{defn5}
	Consider a (covariant) functor $F: \mathbf{C} \rightarrow \mathbf{D}$ between small categories thought of as covariant $\mathbf{F}: \mathbf{C} \rightarrow \mathbf{Cat}$ (with $\mathbf{F} = I \circ {F}$ and $I:\mathbf{D} \rightarrow \mathbf{Cat}$ as defined). Then $\mathcal{X} \rtimes_{\mathbf{F}} \mathbf{C}$ (or $(\int_{\mathbf{C}^{op}} \bar{\mathbf{F}})^{op}$) is a category with 
	
	\begin{itemize}
		\item \textbf{objects}: the pairs $(X,FX)$ where $X \in \mbox{Ob}(\mathbf{C})$ and $FX \in \mbox{Ob}(\mathbf{D})$
		
		\item \textbf{morphisms}: $\mathcal{X} \rtimes_{\mathbf{F}} \mathbf{C}((X,FX),(Y,FY))$ are pairs $(f,\mathrm{id}_{FY})$ where $f:X \rightarrow Y \in \mathbf{C}$, $\mathrm{id}_{FY}:\mathbf{F}f(FX) \rightarrow FY$
		
		\item \textbf{identity}: for $(X,FX)$, the morphism $\mathrm{id}_{(X,FX)} = (\mathrm{id}_{X},\mathrm{id}_{FX})$
		
		\item \textbf{composition}: $(g,\mathrm{id}_{FZ}) \bullet (f,\mathrm{id}_{FY}) = (g \circ f,\mathrm{id}_{FZ} \cdot \mathbf{F}g(\mathrm{id}_{FY})) = (gf,\mathrm{id}_{FZ})$ since $\mathbf{F}g(\mathrm{id}_{FY}) = \mathbf{F}g\mathbf{F}f(FX) \rightarrow \mathbf{F}g(FY)$ 
		
		\item \textbf{unit laws}: for $(f,\mathrm{id}_{FY})$, 
		$(\mathrm{id}_{Y},\mathrm{id}_{FY}) \bullet (f,\mathrm{id}_{FY}) = (f,\mathrm{id}_{FY}) = (f,\mathrm{id}_{FY}) \bullet (\mathrm{id}_{X},\mathrm{id}_{FX})$    
		
		\item \textbf{associativity}:  $(h,\mathrm{id}_{FW})\bullet ((g,\mathrm{id}_{FZ})\bullet (f,\mathrm{id}_{FY}))=((h,\mathrm{id}_{FW})\bullet (g,\mathrm{id}_{FZ}))\bullet (f,\mathrm{id}_{FY}) =(h,\mathrm{id}_{FW})\bullet (g,\mathrm{id}_{FZ})\bullet (f,\mathrm{id}_{FY})$
	\end{itemize}
	\begin{figure}[!h]
		\begin{equation*}
		\xymatrix{
			(X,FX)   &  & (Y,FY)  & & (Z,FZ) \\
			(X,FX) \ar[u]^{(id_X,id_{FX})} \ar@{.>}[rr] \ar[rrrru]^{{(g \circ f,id_{FZ})}} & & (Y,\mathbf{F}f(FX)) \ar@{.>}[rr]  & & (Z,\mathbf{F}g\mathbf{F}f(FX)) \ar[u]_{(id_Z,id_{FZ})} \\
			(X,FX)   &  & (Y,FY)  & & (Z,FZ) \\
			(X,FX) \ar[u]^{(id_X,id_{FX})} \ar@{.>}[rr]^{Ff}  \ar@{>}[rru]^{(f,id_{FY})} & & (Y,\mathbf{F}f(FX)) \ar[u]^{(id_Y,id_{FY})} \ar@{.>}[rr]^{Fg} \ar@{>}[rru]^{(f,id_{FZ})} & & (Z,\mathbf{F}g(FY)) \ar[u]_{(id_Z,id_{FZ})} \\
			X \ar@/^1pc/[rrrr]^{g \circ f} \ar@(dl,ul)[]|{id_X} \ar[rr]_{f} &  & Y \ar@(dr,ur)[]|{id_Y} \ar[rr]_{g} & &  Z \ar@(dr,ur)[]|{id_Z}
		}
		\end{equation*}
		\caption{$\mathcal{X} \rtimes_{\mathbf{F}} \mathbf{C}$ fibred on $\mathbf{C}$; dotted arrows show $\mathbf{D}$ morphisms as actions}
		\label{fig:12fib}
	\end{figure}
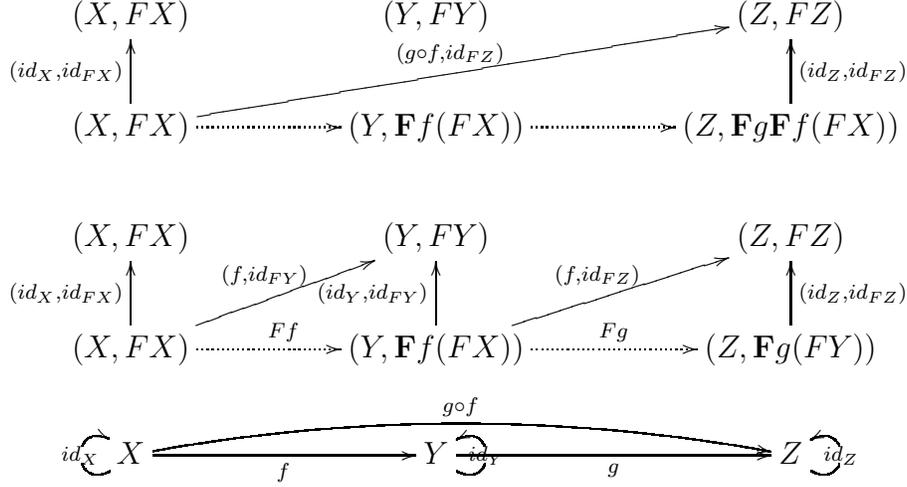 
	
\end{definition} 

The abstract left action again motivates us to define a concrete version given there is some underlying category of $\mathbf{D}$. We define this for $\mathbf{Set}$ (commonly known as construct) however the reader might work out for any underlying category $\mathbf{X}$.    

\begin{definition}[Concrete Left action induced by a functor]
	\label{defn6}
	Consider a covariant functor ${F}: \mathbf{C} \rightarrow \mathbf{D}$ between small categories with $(\mathbf{D}, U)$ being a concrete category over $\mathbf{Set}$ or a faithful $U:\mathbf{D} \rightarrow \mathbf{Set}$. Then ${\mathbb{F}} = U \circ {F}$ and $\mathcal{X} \rtimes_{\mathbb{F}} \mathbf{C}$ (or $(\int_{\mathbf{C}^{op}} \bar{\mathbb{F}})^{op}$) is a category with

	\begin{itemize}
		\item \textbf{objects}: the pairs $(X,x)$ where $X \in \mbox{Ob}(\mathbf{C})$ and $x \in {\mathbb{F}}X = U({F}X)$
		
		\item \textbf{morphisms}: pairs $(f,y): (X,x) \rightarrow (Y,y)$ where $f:X \rightarrow Y \in \mathbf{C}$, $y = {\mathbb{F}}f(x)$
		
		\item \textbf{identity}: for $(X,x)$, the morphism $\mathrm{id}_{(X,x)} = (\mathrm{id}_{X},x)$
		
		\item \textbf{composition}: $(g,{z}) \bullet (f,{y}) = (g \circ f,z \cdot \mathbb{F}g(y)) = (g \circ f,z)$ since $z = {\mathbb{F}}g{\mathbb{F}}f(x)$ 
		
		\item \textbf{unit laws}: for $(f,{y})$, 
		$(\mathrm{id}_{Y},{y}) \bullet (f,{y}) = (f,{y}) = (f,{y}) \bullet (\mathrm{id}_{X},{x})$    
		
		\item \textbf{associativity}:  $(h,{w})\bullet ((g,{z})\bullet (f,{y}))=((h,{w})\bullet (g,{z}))\bullet (f,{y}) =(h,{w})\bullet (g,{z})\bullet (f,{y})$
	\end{itemize}
\end{definition}

\begin{figure}[!h]
	\begin{equation*}
	\xymatrix{
		(X,x)   &  & (Y,y)  & & (Z,z) \\
		(X,x) \ar[u]^{(id_X,x)} \ar@{.>}[rr] \ar[rrrru]^{{(g \circ f,z)}} & & (Y,\mathbb{F}f(x)) \ar@{.>}[rr]  & & (Z,\mathbb{F}g\mathbb{F}f(x)) \ar[u]_{(id_Z,z)} \\
		(X,x)   &  & (Y,y)  & & (Z,z) \\
		(X,x) \ar[u]^{(id_X,x)} \ar@{.>}[rr]^{U(Ff)}  \ar@{>}[rru]^{(f,y)} & & (Y,\mathbb{F}f(x)) \ar[u]^{(id_Y,y)} \ar@{.>}[rr]^{U(Fg)} \ar@{>}[rru]^{(g,z)} & & (Z,\mathbb{F}g(y)) \ar[u]_{(id_Z,z)} \\
		X \ar@/^1pc/[rrrr]^{g \circ f} \ar@(dl,ul)[]|{id_X} \ar[rr]_{f} &  & Y \ar@(dr,ur)[]|{id_Y} \ar[rr]_{g} & &  Z \ar@(dr,ur)[]|{id_Z}
	}
	\end{equation*}
	\caption{$\mathcal{X} \rtimes_{\mathbb{F}} \mathbf{C}$ fibred on $\mathbf{C}$; dotted arrows denote concrete functions as left actions}
	\label{fig:leftconcretefib}
\end{figure}
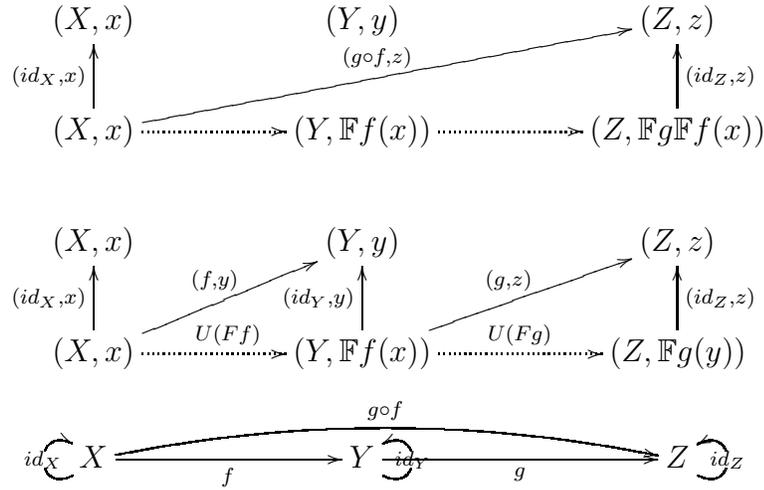 

\subsection{Duality between the categories defined as right actions and left actions}
\label{dual}

Now observing carefully we can discern that the opposite of $\int_{\mathbf{C}^{op}} \bar {\mathbf{F}}$ is identical to $\mathcal{X} \rtimes_{\mathbf{F}} \mathbf{C}$ or precisely $(\int_{\mathbf{C}^{op}} \bar{\mathbf{F}})^{op} = \mathcal{X} \rtimes_{\mathbf{F}} \mathbf{C}$. For this note that we first make use of the fact that $\bar{F}X = FX$ and then $(\mathbf{C}^{op})^{op} = \mathbf{C}$. Thus objects in  $(\int_{\mathbf{C}^{op}} \bar{\mathbf{F}})^{op}$ are the pairs $(X,\bar{F}X)$ which are same as $(X,FX)$ of $\mathcal{X} \rtimes_{\mathbf{F}} \mathbf{C}$. Next consider the arrow $(f^{\circ},\mathrm{id}_{\bar{F}Y}): (Y,\bar{F}Y) \rightarrow (X,\bar{F}X)$ of $\int_{\mathbf{C}^{op}} \bar{\mathbf{F}}$. The opposite arrow of this in $(\int_{\mathbf{C}^{op}} \bar{\mathbf{F}})^{op}$ is $(f,\mathrm{id}_{{F}Y}): (X,{F}X) \rightarrow (Y,{F}Y)$ which is same as the arrow of $\mathcal{X} \rtimes_{\mathbf{F}} \mathbf{C}$. Indeed the reader can verify that these categories are identical.

In the special case of groups, groupoids, partial groupoids (partial monic) categories where arrows can be inverted uniquely we have ${\mathbf{C}^{op}} \cong {\mathbf{C}}$ via the inverse map on arrows and therefore both these conventions of right and left actions are the same. In these cases the abstract and concrete actions reduce to $\int_{\mathbf{C}} \bar{\mathbf{F}}$ and  $\int_{\mathbf{C}} \bar{\mathbb{F}}$ respectively as defined next. These are isomorphic to left action categories.

\begin{definition}[Abstract Right action induced by a functor]
	\label{defn7}
	Consider a strict contravariant functor $\bar{F}: \mathbf{C} \rightarrow \mathbf{D}$ between small categories thought of as $\bar{\mathbf{F}}: \mathbf{C} \rightarrow \mathbf{Cat}$ (with $\bar{\mathbf{F}} = I \circ \bar{F}$ and $I:\mathbf{D} \rightarrow \mathbf{Cat}$ as defined). Then $\int_{\mathbf{C}} \bar{\mathbf{F}}$ is a category with

	\begin{itemize}
		\item \textbf{objects}: the pairs $(X,\bar{F}X)$ where $X \in \mbox{Ob}(\mathbf{C})$ and $\bar{F}X \in \mbox{Ob}(\mathbf{D})$
		
		\item \textbf{morphisms}: ${\int_{\mathbf{C}} \bar{\mathbf{F}}}((X,\bar{F}X),(Y,\bar{F}Y))$ are pairs $(f,\mathrm{id}_{\bar{F}X})$ where $f:X \rightarrow Y \in \mathbf{C}$, $\mathrm{id}_{\bar{F}X}:\bar{F}X \rightarrow \bar{\mathbf{F}}f(\bar{F}Y)$
		
		\item \textbf{identity}: for $(X,\bar{F}X)$, the morphism $\mathrm{id}_{(X,\bar{F}X)} = (\mathrm{id}_{X},\mathrm{id}_{\bar{F}X})$
		
		\item \textbf{composition}: $(g,\mathrm{id}_{\bar{F}Y}) \bullet (f,\mathrm{id}_{\bar{F}X}) = (g \circ f,\bar{\mathbf{F}}f(\mathrm{id}_{\bar{F}Y}) \cdot \mathrm{id}_{\bar{F}X}) = (gf,\mathrm{id}_{\bar{F}X})$ since $\bar{\mathbf{F}}f(\mathrm{id}_{\bar{F}Y}): \bar{\mathbf{F}}f\bar{F}Y \rightarrow \bar{\mathbf{F}}f\bar{\mathbf{F}}g(\bar{F}Z)$ 
		
		\item \textbf{unit laws}: for $(f,\mathrm{id}_{\bar{F}X})$, 
		$(\mathrm{id}_{Y},\mathrm{id}_{\bar{F}Y}) \bullet (f,\mathrm{id}_{\bar{F}X}) = (f,\mathrm{id}_{\bar{F}X}) = (f,\mathrm{id}_{\bar{F}X}) \bullet (\mathrm{id}_{X},\mathrm{id}_{\bar{F}X})$    
		
		\item \textbf{associativity}:  $(h,\mathrm{id}_{\bar{F}Z})\bullet ((g,\mathrm{id}_{\bar{F}Y})\bullet (f,\mathrm{id}_{\bar{F}X}))=((h,\mathrm{id}_{\bar{F}Z})\bullet (g,\mathrm{id}_{\bar{F}Y}))\bullet (f,\mathrm{id}_{\bar{F}X}) =(h,\mathrm{id}_{\bar{F}Z})\bullet (g,\mathrm{id}_{\bar{F}Y})\bullet (f,\mathrm{id}_{\bar{F}X})$
	\end{itemize}
\end{definition}
\begin{figure}[!h]
	\begin{equation*}
	\xymatrix{
		(X,\bar{\mathbf{F}}f\bar{\mathbf{F}}g(\bar{F}Z))  &  & (Y,\bar{\mathbf{F}}g(\bar{F}Z)) \ar@{.>}[ll]  & & (Z,\bar{F}Z) \ar@{.>}[ll] \\
		(X,\bar{F}X) \ar[u]^{(id_X,id_{\bar{F}X})} \ar@{>}[rrrru]^{(g \circ f,id_{\bar{F}X})} & & (Y,\bar{F}Y)  & & (Z,\bar{F}Z) \ar[u]_{(id_Z,id_{\bar{F}Z})} \\
		(X,\bar{\mathbf{F}}f(\bar{F}Y))  &  & (Y,\bar{\mathbf{F}}g(\bar{F}Z)) \ar@{.>}[ll]_{\bar{F}f}  & & (Z,\bar{F}Z) \ar@{.>}[ll]_{\bar{F}g} \\
		(X,\bar{F}X) \ar[u]^{(id_X,id_{\bar{F}X})}  \ar@{>}[rru]^{(f,id_{\bar{F}X})} & & (Y,\bar{F}Y) \ar[u]^{(id_Y,id_{\bar{F}Y})} \ar@{>}[rru]^{(g,id_{\bar{F}Y})} & & (Z,\bar{F}Z) \ar[u]_{(id_Z,id_{\bar{F}Z})} \\
		X \ar@/^1pc/[rrrr]^{g \circ f} \ar@(dl,ul)[]|{id_X} \ar[rr]_{f} &  & Y \ar@(dr,ur)[]|{id_Y} \ar[rr]_{g} & &  Z \ar@(dr,ur)[]|{id_Z}
	}
	\end{equation*}   
	\caption{$\int_{\mathbf{C}}\bar{\mathbf{F}}$ fibred on $\mathbf{C} \cong \mathbf{C}^{op}$; dotted arrows show $\mathbf{D}$ morphisms as actions}
	\label{fig:13fib}
\end{figure}

\begin{definition}[Concrete Right action induced by a functor]
	\label{defn8}
	Consider a strict contravariant functor $\bar{F}: \mathbf{C} \rightarrow \mathbf{D}$ between small categories with $(\mathbf{D}, U)$ being a concrete category over $\mathbf{Set}$ or a faithful $U:\mathbf{D} \rightarrow \mathbf{Set}$. Then ${\bar{\mathbb{F}}} = U \circ \bar{F}$ and $\int_{\mathbf{C}} \bar{\mathbb{F}}$ is a category with

	\begin{itemize}
		\item \textbf{objects}: the pairs $(X,x)$ where $X \in \mbox{Ob}(\mathbf{C})$ and $x \in \bar{\mathbb{F}}X = U(\bar{F}X)$
		
		\item \textbf{morphisms}: pairs $(f,x): (X,x) \rightarrow (Y,y)$ where $f:X \rightarrow Y \in \mathbf{C}$, $x = \bar{\mathbb{F}}f(y)$
		
		\item \textbf{identity}: for $(X,x)$, the morphism $\mathrm{id}_{(X,x)} = (\mathrm{id}_{X},x)$
		
		\item \textbf{composition}: $(g,{y}) \bullet (f,{x}) = (g \circ f,{x})$ since $x = \bar{\mathbb{F}}f\bar{\mathbb{F}}g(z)$ 
		
		\item \textbf{unit laws}: for $(f,{x})$, 
		$(\mathrm{id}_{Y},{y}) \bullet (f,{x}) = (f,{x}) = (f,{x}) \bullet (\mathrm{id}_{X},{x})$    
		
		\item \textbf{associativity}:  $(h,{z})\bullet ((g,{y})\bullet (f,{x}))=((h,{z})\bullet (g,{y}))\bullet (f,{x}) =(h,{z})\bullet (g,{y})\bullet (f,{x})$
	\end{itemize}
\end{definition}

\begin{figure}[!h]
	\begin{equation*}
	\xymatrix{
		(X,\bar{\mathbb{F}}f\bar{\mathbb{F}}g(z))  &  & (Y,\bar{\mathbb{F}}g(z)) \ar@{.>}[ll]  & & (Z,z) \ar@{.>}[ll] \\
		(X,x) \ar[u]^{(id_X,{x})} \ar@{>}[rrrru]^{(g \circ f,{x})} & & (Y,y)  & & (Z,z) \ar[u]_{(id_Z,{z})} \\
		(X,\bar{\mathbb{F}}f(y))  &  & (Y,\bar{\mathbb{F}}g(z)) \ar@{.>}[ll]_{U(\bar{F}f)}  & & (Z,z) \ar@{.>}[ll]_{U(\bar{F}g)} \\
		(X,x) \ar[u]^{(id_X,{x})}  \ar@{>}[rru]^{(f,{x})} & & (Y,y) \ar[u]^{(id_Y,{y})} \ar@{>}[rru]^{(g,{y})} & & (Z,z) \ar[u]_{(id_Z,{z})} \\
		X \ar@/^1pc/[rrrr]^{g \circ f} \ar@(dl,ul)[]|{id_X} \ar[rr]_{f} &  & Y \ar@(dr,ur)[]|{id_Y} \ar[rr]_{g} & &  Z \ar@(dr,ur)[]|{id_Z}
	}
	\end{equation*}   
	\caption{$\int_{\mathbf{C}}\bar{\mathbb{F}}$ fibred on $\mathbf{C} \cong \mathbf{C}^{op}$; dotted arrows denote concrete functions as right actions}
	\label{fig:rightconcretefib}
\end{figure}
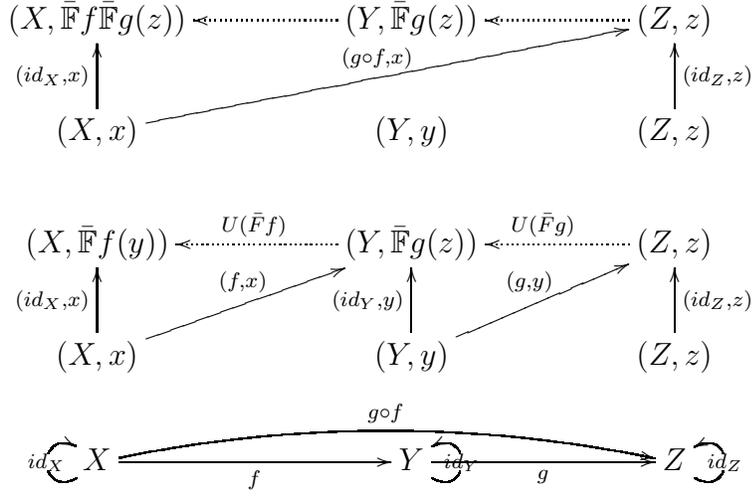 

The categories $\int_{\mathbf{C}^{op}} \bar{\mathbf{F}}$,$\int_{\mathbf{C}^{op}} \bar{\mathbb{F}}$ $\mathcal{X} \rtimes_{\mathbf{F}} \mathbf{C}$ (or $(\int_{\mathbf{C}^{op}} \bar{\mathbf{F}})^{op}$) $\mathcal{X} \rtimes_{\mathbb{F}} \mathbf{C}$ (or $(\int_{\mathbb{C}^{op}} \bar{\mathbb{F}})^{op}$) defined and visualized in this section generalize the concept of monoid action to category action. The generalization is in sense that there are simultaneous actions on multiple objects unlike a single object in a monoid action. More accurately in category action the object $\mathcal{X}$ (on which $\mathbf{F}$ or $\mathbb{F}$ defines an action) is a coproduct object inside the category $\mathbf{Cat}$ or $\mathbf{Set}$ respectively. This denotes actually the family of all the objects which lie in the image subcategory $F(\mathbf{C})$ and $\mathbb{F}(\mathbf{C})$ each treated as a category either trivially or usual categorification of its underlying set. More precisely, 
\begin{equation}
\mathcal{X} = \amalg_{X \in Ob(\mathbf{C})} \mathbf{F}(X) \quad \mathcal{X} = \amalg_{X \in Ob(\mathbf{C})} \mathbb{F}(X) 
\end{equation}
Symbolically this is same as $\mathbf{C} \times \mathcal{X} \rightarrow \mathcal{X}$. Of course the individual objects $FX$ in the category $F(\mathbf{C})$ are acted upon by all the arrows $f:X \rightarrow Y$ of $\mathbf{C}$ whose domain or source object is $X$ which can be captured by defining action set-theoretically or element wise. Indeed using standard functor definition we can observe that every $f$ defines a corresponding well-defined arrow $Ff: FX \rightarrow FY$ in the category $\mathbf{D}$. The action perspective in the context of symmetry will be revisited in the next paper~\cite{salilp2} of this sequel. However we briefly state and prove that the usual transformation groupoid $X \wquot G$ can be viewed as a base structured category. It is well-known in the groupoid literature \cite{brown87},\cite{weinstein} that a general group action gives rise to a transformation groupoid. More precisely,

\begin{definition}\cite{morton} Let $\phi : G \ra Aut(X)$, be a usual (set-theoretic) group action then transformation groupoid $X \wquot G$ is the groupoid consisting of:
	\begin{itemize}
		\item \textbf{Objects}: each element $x \in X $; denoted as $\mbox{Ob}(X \wquot G) = X$
		\item \textbf{Morphisms}: $(X \wquot G)(x,y) = (g,x) \in G \times X$, with $\phi_g(x)=y$
		\item \textbf{Composition}: $(g', \phi_g(x))\circ (g, x)= (g'g,x)$
	\end{itemize}
	\label{def:transfngroupoid}
\end{definition}

Now we show that every transformation groupoid is also a base structured category.  

\begin{proposition}
	\label{prop4}
	A classic transformation groupoid $X \wquot G$ is isomorphic to a base-structured category $\int_{\mathbf{C}^{op}} \bar{\mathbb{F}}$ where $\mathbf{C} = \mathbf{G}$ the group $G$ treated as one object category.   	
\end{proposition}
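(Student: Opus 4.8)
The plan is to write out both categories explicitly and exhibit an isomorphism of categories between them, checking it is bijective on objects and fully faithful. First I would fix the data. Take $\mathbf{C} = \mathbf{G}$, the one-object category with $\mathrm{Hom}(\ast,\ast) = G$ and composition the group multiplication; since $G$ is a group, $\mathbf{G}$ is a groupoid, so $\mathbf{G}^{op}\cong\mathbf{G}$ via $g\mapsto g^{-1}$ — this is precisely the hypothesis $\mathbf{C}^{op}\cong\mathbf{C}$ discussed in Section~\ref{dual}. A set-theoretic action $\phi:G\to\mathrm{Aut}(X)$ is exactly a functor $F:\mathbf{G}\to\mathbf{Set}$ with $F\ast = X$ and $Fg = \phi_g$, the action axioms $\phi_e = \mathrm{id}_X$ and $\phi_{g'g} = \phi_{g'}\circ\phi_g$ being functoriality of $F$. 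As $\mathbf{Set}$ is concrete over itself with $U = \mathrm{id}_{\mathbf{Set}}$, we have $\mathbb{F} = F$, and Lemma~\ref{contra} produces the contravariant $\bar F:\mathbf{G}^{op}\to\mathbf{Set}$ with $\bar{\mathbb{F}} = U\circ\bar F = \bar F$.

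Next I would unwind $\int_{\mathbf{G}^{op}}\bar{\mathbb{F}}$ from Definition~\ref{defn4}. Its objects are the pairs $(\ast,x)$ with $x\in\bar{\mathbb{F}}(\ast) = X$, so they form exactly the set $X = \mathrm{Ob}(X\wquot G)$. A morphism out of $(\ast,x)$ is a pair $(g^{\circ},x)$, one for each $g\in G$, with codomain $(\ast,\bar{\mathbb{F}}(g^{\circ})(x)) = (\ast,\phi_g(x))$; hence the morphisms $(\ast,x)\to(\ast,y)$ are the $(g^{\circ},x)$ with $\phi_g(x) = y$, in bijection with the morphisms $(g,x):x\to y$ of $X\wquot G$ from Definition~\ref{def:transfngroupoid}. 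So I would set $\Psi$ to be $x\mapsto(\ast,x)$ on objects and $(g,x)\mapsto(g^{\circ},x)$ on arrows; it is manifestly a bijection on objects and on every hom-set, and it carries $\mathrm{id}_x = (e,x)$ to $(e^{\circ},x) = \mathrm{id}_{(\ast,x)}$. What is left is compatibility with composition.

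This composition check is the crux, and the only delicate point. In $X\wquot G$ one has $(g',\phi_g(x))\circ(g,x) = (g'g,x)$, while composing the $\Psi$-images via Definition~\ref{defn4} gives $((g')^{\circ}\circ g^{\circ},\,x)$, and $(g')^{\circ}\circ g^{\circ}$ computed in $\mathbf{G}^{op}$ is the opposite of $g\circ g'$ in $\mathbf{G}$: the group elements appear in the reversed order relative to $X\wquot G$. This is exactly the right/left-action asymmetry flagged in Section~\ref{dual}, and it is absorbed using that $\mathbf{G}$ is a group. Every morphism $(g^{\circ},x)$ of $\int_{\mathbf{G}^{op}}\bar{\mathbb{F}}$ is invertible — with inverse $((g^{-1})^{\circ},\phi_g(x))$, since $\phi_{g^{-1}}\circ\phi_g = \mathrm{id}$ — so $\int_{\mathbf{G}^{op}}\bar{\mathbb{F}}$ is a groupoid, and any groupoid is isomorphic to its opposite via the identity on objects together with inversion on arrows. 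Composing $\Psi$ with this canonical isomorphism turns the order-reversal into honest preservation of composition, after which associativity and the unit laws for the resulting functor are formal. I expect the genuine obstacle to be purely bookkeeping: pinning down the variance and composition-order conventions (how $\mathbf{C}^{op}$ reverses, how $\bar{\mathbb{F}}$ acts on $g^{\circ}$, on which side the group element sits in $(g,x)$) consistently enough that the two groupoids — which differ only by passing to the opposite — are matched up correctly. Once that is fixed, constructing the isomorphism and verifying its functoriality are routine.
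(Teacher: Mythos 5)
Your proposal is correct and lands on the same isomorphism, but it takes a more careful route than the paper's own proof. The paper invokes $\mathbf{G}^{op}\cong\mathbf{G}$ at the very start and then works entirely with the $\int_{\mathbf{C}}\bar{\mathbb{F}}$ form of Definition~\ref{defn8}: it declares a contravariant $\bar F:\mathbf{G}\to\mathbf{Set}$, writes out objects $(\star,x)$ and morphisms $(g,x):(\star,x)\to(\star,y)$ with $x=\bar Fg(y)$, and asserts that after dropping the first component the objects and morphisms of $X\wquot G$ are ``identical'' to these --- which works only because it silently takes $\bar Fg=\phi_{g^{-1}}$, so that $x=\bar Fg(y)$ becomes $y=\phi_g(x)$ and the composition laws $(g',y)\bullet(g,x)=(g'g,x)$ agree on the nose. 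You instead start from the covariant $F$ encoding $\phi$, pass to $\bar F$ on $\mathbf{G}^{op}$ via Lemma~\ref{contra}, unwind Definition~\ref{defn4} directly, and confront the resulting order reversal head-on, absorbing it by the canonical isomorphism of a groupoid with its opposite (inversion on arrows). That last step is exactly the content of the paper's unexplained appeal to $\mathbf{G}^{op}\cong\mathbf{G}$, made explicit; your version is more robust to the variance and composition-order ambiguities in Definition~\ref{defn4} (whose stated condition $x=\bar{\mathbb{F}}f(y)$ and its composition clause do not obviously typecheck against each other), at the cost of an extra reduction step. Both arguments are sound; yours documents the one genuinely nontrivial identification that the paper elides.
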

\begin{proof}
	The group $G$ can be viewed as a single object category $\mathbf{G}$ with an object $\star$ and $\mathbf{G}(\star,\star) = G$.
	But since $\mathbf{G}^{op} \cong \mathbf{G}$ we directly use the definition~\ref{defn8} where we consider a strict contravariant functor $\bar{F}: \mathbf{G} \rightarrow \mathbf{Set}$ between small categories with $(\mathbf{Set}, id)$ being modeled as a concrete category over itself or an underlying functor $U = id:\mathbf{Set} \rightarrow \mathbf{Set}$. Then ${\bar{\mathbb{F}}} = id \circ \bar{F} = \bar{F}$ and $\int_{\mathbf{G}} \bar{{F}}$ becomes a category with objects the pairs $(\star,x)$ where $\star \in \mbox{Ob}(\mathbf{G})$ and $x \in \bar{F}(\star)=X$, morphisms the pairs $(g,x): (\star,x) \rightarrow (\star,y)$ where $g:\star \rightarrow \star \in \mathbf{G}$, $x = \bar{F}g(y)$. The composition is given by $(g',{y}) \bullet (g,{x}) = (g' \circ g,{x})$ since $x = \bar{{F}}g\bar{{F}}g'(z)$. Finally comparing with the classic $X \wquot G$, we find that the objects of $X \wquot G$ are simply the objects of ${\int_{\mathbf{G}} \bar{{F}}}$ relabeled by dropping the first component while the morphisms are identical. Indeed we have $X \wquot G \cong \int_{\mathbf{G}} \bar{{F}}$ and in fact the transformation groupoid is just the special case of the general base structured category $\int_{\mathbf{C}^{op}} \bar{\mathbb{F}}$.
\end{proof}

We now state and prove a result interrelating the base structured categories.

\begin{proposition}
	\label{prop}
	Let $F:\mathbf{C} \rightarrow \mathbf{D}$ be any ordinary \textbf{abstract} 1-functor (thought of as a functor $\mathbf{F}: \mathbf{C} \xrightarrow{F} \mathbf{D} \xrightarrow{I} \mathbf{Cat}$ as defined earlier). Then the following categories are \textbf{abstractly} isomorphic,
	\begin{equation}
	\mathbf{C} \cong  (F,\mathbf{C},\mathbf{D}) \cong \mathcal{X} \rtimes_{\mathbf{F}} \mathbf{C}
	\end{equation}
	In addition if $\mathbf{C}^{op}$ is isomorphic to $\mathbf{C}$ then following categories are \textbf{abstractly} isomorphic
	\begin{equation}
	\mathbf{C} \cong  (F,\mathbf{C},\mathbf{D}) \cong \mathcal{X} \rtimes_{\mathbf{F}} \mathbf{C} \cong  \int_{\mathbf{C}} \bar{\mathbf{F}}
	\end{equation}
	Further additionally if $(\mathbf{D}, U)$ is a concrete category over $\mathbf{Set}$ then the following base-structured categories become \textbf{concretely} isomorphic,
	\begin{equation}
	(\mathbb{F},\mathbf{C},\mathbf{Set}) \cong  \int_{\mathbf{C}} \bar{\mathbb{F}} \cong \mathcal{X} \rtimes_{\mathbb{F}} \mathbf{C}
	\end{equation}
	Being subcategories of $\mathbf{C} \times \mathbf{D}$ or $\mathbf{C} \times \mathbf{Set}$ the base-structured categories have a first projection functor (which is restriction of the usual first projection functor) onto $\mathbf{C}$. The first projection functors sending $(F,\mathbf{C},\mathbf{D}), \mathcal{X} \rtimes_{\mathbf{F}} \mathbf{C},\int_{\mathbf{C}} \bar{\mathbf{F}}$ to $\mathbf{C}$ are the classic split (op)fibrations.
	The first projection functors sending $(\mathbb{F},\mathbf{C},\mathbf{Set}),\int_{\mathbf{C}} \bar{\mathbb{F}}, \mathcal{X} \rtimes_{\mathbb{F}} \mathbf{C}$ to $\mathbf{C}$ are the classic split opfibrations.
\end{proposition}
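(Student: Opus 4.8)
The plan is to establish each isomorphism by exhibiting an explicit functor and its inverse, exploiting the fact — already visible in Definitions~\ref{defn1},~\ref{defn2},~\ref{defn3},~\ref{defn4},~\ref{defn5},~\ref{defn6},~\ref{defn7},~\ref{defn8} — that in every one of these categories the second component of an object is completely determined by the first (either $FX$, or $\bar FX = FX$, or an element $x \in \mathbb{F}X$), and likewise the second component of a morphism is forced (it is an identity, or a prescribed image $Ff$, or $\mathbb{F}f|_x$). First I would treat the abstract chain $\mathbf{C} \cong (F,\mathbf{C},\mathbf{D}) \cong \mathcal{X}\rtimes_{\mathbf{F}}\mathbf{C}$: define $\Phi:\mathbf{C}\to(F,\mathbf{C},\mathbf{D})$ by $X\mapsto(X,FX)$, $f\mapsto(f,Ff)$, check functoriality from the composition clause of Definition~\ref{defn1} (which just repackages $F(g\circ f)=Fg\cdot Ff$), and observe the first projection $P$ of Proposition~\ref{prop2} is a strict two-sided inverse since $P\Phi=\mathrm{id}_{\mathbf{C}}$ and $\Phi P=\mathrm{id}$ because an object $(X,FX)$ has no choice in its second slot. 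The isomorphism $(F,\mathbf{C},\mathbf{D})\cong\mathcal{X}\rtimes_{\mathbf{F}}\mathbf{C}$ is then the identity-on-objects functor sending $(f,Ff)$ to $(f,\mathrm{id}_{FY})$, which is well defined and invertible precisely because, once $f$ is fixed, $Ff$ and $\mathrm{id}_{FY}$ carry the same information (namely none beyond $f$); I would verify it respects the two compositions, $(g,Fg)\circ(f,Ff)=(g\circ f, Fg\cdot Ff)$ on one side and $(g,\mathrm{id}_{FZ})\bullet(f,\mathrm{id}_{FY})=(g\circ f,\mathrm{id}_{FZ})$ on the other, which match under the correspondence.

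Next I would handle the conditional clause: when $\mathbf{C}^{op}\cong\mathbf{C}$ we may, as discussed in Section~\ref{dual} and Definition~\ref{defn7}, form $\int_{\mathbf{C}}\bar{\mathbf{F}}$, and by the duality observation in Section~\ref{dual} we have $(\int_{\mathbf{C}^{op}}\bar{\mathbf{F}})^{op}=\mathcal{X}\rtimes_{\mathbf{F}}\mathbf{C}$; composing this with the chosen isomorphism $\mathbf{C}\cong\mathbf{C}^{op}$ (which induces $\int_{\mathbf{C}^{op}}\bar{\mathbf{F}}\cong\int_{\mathbf{C}}\bar{\mathbf{F}}$) and taking opposites yields $\int_{\mathbf{C}}\bar{\mathbf{F}}\cong\mathcal{X}\rtimes_{\mathbf{F}}\mathbf{C}$, closing the abstract chain. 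I would spell this out by noting that an isomorphism $J:\mathbf{C}\to\mathbf{C}^{op}$ sends the data defining $\int_{\mathbf{C}}\bar{\mathbf{F}}$ bijectively to that defining $\int_{\mathbf{C}^{op}}\bar{\mathbf{F}}$, again because all second components are determined. The concrete chain $(\mathbb{F},\mathbf{C},\mathbf{Set})\cong\int_{\mathbf{C}}\bar{\mathbb{F}}\cong\mathcal{X}\rtimes_{\mathbb{F}}\mathbf{C}$ is proved the same way, now tracking elements: an object $(X,x)$ with $x\in\mathbb{F}X$ appears identically in all three, and a morphism $(f,\mathbb{F}f|_x)$ of Definition~\ref{defn2} corresponds to $(f,y)$ with $y=\mathbb{F}f(x)$ of Definition~\ref{defn6}, the correspondence being a bijection because the partial function $\mathbb{F}f|_x$ is nothing but the ordered pair $(x,\mathbb{F}f(x))$; compatibility with composition reduces to $\mathbb{F}(g\circ f)=\mathbb{F}g\circ\mathbb{F}f$.

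Finally, the last two sentences of the statement are essentially restatements of Propositions~\ref{prop2} and~\ref{prop3} transported across the isomorphisms just constructed: since $(F,\mathbf{C},\mathbf{D})$, $\mathcal{X}\rtimes_{\mathbf{F}}\mathbf{C}$ and $\int_{\mathbf{C}}\bar{\mathbf{F}}$ are identified by isomorphisms commuting with the respective first projections to $\mathbf{C}$ (each isomorphism is identity or a relabelling on first components), a split (op)fibration structure on any one transports to the others; and Proposition~\ref{prop3} already gives the split opfibration for $(\mathbb{F},\mathbf{C},\mathbf{Set})$, hence for $\int_{\mathbf{C}}\bar{\mathbb{F}}$ and $\mathcal{X}\rtimes_{\mathbb{F}}\mathbf{C}$. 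I expect the only real subtlety — the ``hard part'' — to be bookkeeping the variance: one must be careful that the isomorphism $\mathbf{C}\cong\mathbf{C}^{op}$ is used consistently so that the change from ${}^{op}$-indexed Grothendieck constructions to the $\rtimes$ form genuinely commutes with the projection functors rather than reversing arrow directions; everything else is the routine verification that pairs whose second coordinate is forced give nothing beyond the first coordinate, which is exactly the ``base structured'' philosophy emphasized in the introduction. I would also explicitly flag that the concrete isomorphisms rely on $U$ being faithful (so that $\mathbb{F}f$ is determined by $Ff$ and the element assignments are genuine functions), which is part of the standing hypothesis on $(\mathbf{D},U)$.
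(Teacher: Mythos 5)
Your proposal is correct and follows essentially the same route as the paper's proof: abstract isomorphisms via the evident bijections on objects and hom-sets (with the first projection as inverse), the duality $(\int_{\mathbf{C}^{op}}\bar{\mathbf{F}})^{op}=\mathcal{X}\rtimes_{\mathbf{F}}\mathbf{C}$ together with $\mathbf{C}\cong\mathbf{C}^{op}$ for the conditional clause, the morphism correspondence $(f,\mathbb{F}f|_{x})\leftrightarrow(f,y)\leftrightarrow(f,x)$ for the concrete chain, and transport of the split (op)fibration structures from Propositions~\ref{prop2} and~\ref{prop3}. You are in fact somewhat more explicit than the paper (which largely asserts the bijections are ``easy to see''), and your flags about variance bookkeeping and the faithfulness of $U$ are reasonable additions rather than deviations.
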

\begin{proof}
	First consider the category $(F,\mathbf{C},\mathbf{D})$.This is a well-defined category with objects the pairs $(X,FX),(Y,FY),...$ from $\mathbf{C}$ and $F\mathbf{C}$ and morphisms $\mathbf{F}((X,FX),(Y,FY))=\{(f,Ff):(X,FX)\rightarrow (Y,FY)\}$ considered pairwise from $\mathbf{C}$ and $F\mathbf{C}$. It is easy to observe that it is abstractly isomorphic to $\mathbf{C}$ by noting both the objects and the morphisms of each of these categories standing in a one-to-one correspondence to each other. In other words they are bijective on objects and on morphism sets. Same holds for $\mathcal{X} \rtimes_{\mathbf{F}} \mathbf{C}$ the way we have defined this category via Grothendieck construction or precisely as $(\int_{\mathbf{C}^{op}} \bar{\mathbf{F}})^{op}$ and consequently we have 
	\begin{equation}
	\mathbf{C} \cong  (F,\mathbf{C},\mathbf{D}) \cong \mathcal{X} \rtimes_{\mathbf{F}} \mathbf{C}
	\end{equation}

	Next if $\mathbf{C}^{op}$ is isomorphic to $\mathbf{C}$, we have a well-defined contravariant functor $\bar{F}: \mathbf{C} \rightarrow \mathbf{D}$ between small categories which could be thought of as $\bar{\mathbf{F}}: \mathbf{C} \rightarrow \mathbf{Cat}$ (with $\bar{\mathbf{F}} = I \circ \bar{F}$ and $I:\mathbf{D} \rightarrow \mathbf{Cat}$ as defined). Then $\int_{\mathbf{C}} \bar{\mathbf{F}}$ is a category as defined earlier in the sense of abstract right category action. Abstract isomorphism with $\mathbf{C}$ is easy to see and we have, 
	\begin{equation}
	\mathbf{C} \cong  (F,\mathbf{C},\mathbf{D}) \cong \mathcal{X} \rtimes_{\mathbf{F}} \mathbf{C} \cong \int_{\mathbf{C}} \bar{\mathbf{F}}
	\end{equation}
	
	It is crucial to note that the base-structured categories \textbf{cannot be made concretely isomorphic} to the base category $\mathbf{C}$ under any circumstances (such as if $\mathbf{C}$ and $\mathbf{D}$ are taken to be concrete and $F$ is made concrete) since there are additional components in the base-structured categories forgotten by the usual (first) projection functor. These are not bijective at the level of elements of underlying sets and consequently there is additional structure at level of sets which remains transparent to category theory supporting the intuition of trivial categorification (the objects which could be structured sets are treated simply as trivial categories concealing the structure from category theory). 
	
	Next let $(\mathbf{D}, U)$ be concrete category with some underlying category $\mathbf{X}$. Then we have a product category $(\mathbf{C} \times \mathbf{Set})$ (for simplicity we will assume these are constructs but the results hold for any underlying category $\mathbf{X}$ not necessarily $\mathbf{Set}$). The base-structured categories are each concrete subcategories of the $(\mathbf{C} \times \mathbf{Set})$. The concrete isomorphism is established by noting that objects of $(\mathbb{F},\mathbf{C},\mathbf{Set})$ are identical to $\int_{\mathbf{C}} \bar{\mathbb{F}}$ and $\mathcal{X} \rtimes_{\mathbb{F}} \mathbf{C}$ whereas the morphism $(f,\mathbb{F}f|_{x}): (X,x) \rightarrow (Y,y)$ is in bijection with $(f,y): (X,x) \rightarrow (Y,y)$ and $(f,x): (X,x) \rightarrow (Y,y)$ and concrete isomorphism is established as shown in Figure~\ref{fig:concreteiso}.
	\begin{figure}
		\[
		\begin{gathered}
		\xymatrix{
			(X,x)   &  & (Y,y)  \\
			(X,x) \ar[u]^{(id_X,x)} \ar@{>}[rr]^{(f,\mathbb{F}f|_{x})}  \ar@{.>}[rru]^{(f,y)} & & (Y,\mathbb{F}f(x)) \ar[u]^{(id_Y,y)}   \\
		}
		\end{gathered}
		\qquad
		\begin{gathered}
		\xymatrix{
			(X,\bar{\mathbb{F}}f(y))  \ar@{>}[rr]^{(f,\mathbb{F}f|_{x})} &  & (Y,y) \\
			(X,x) \ar[u]^{(id_X,x)}  \ar@{.>}[rru]^{(f,x)} & & (Y,y) \ar[u]^{(id_Y,y)}  \\
		} 
		\end{gathered}
		\] 
		\caption{Concrete isomorphisms between $(\mathbb{F},\mathbf{C},\mathbf{Set}), \mathcal{X} \rtimes_{\mathbb{F}} \mathbf{C}, \int_{\mathbf{C}} \bar{\mathbb{F}}$}
		\label{fig:concreteiso}
	\end{figure}
	
	Thus we have,
	\begin{equation}
	(\mathbb{F},\mathbf{C},\mathbf{Set}) \cong  \int_{\mathbf{C}} \bar{\mathbb{F}} \cong \mathcal{X} \rtimes_{\mathbb{F}} \mathbf{C}
	\end{equation}  
	
	Indeed all the three categories are isomorphic and have a first projection onto the category $\mathbf{C}$ defined as $p:(F,\mathbf{C},\mathbf{D}) \rightarrow \mathbf{C}$ where $p(X,FX) := X$ for all objects $(X,FX) \in Ob((F,\mathbf{C},\mathbf{D}))$ and $p(f,Ff) := f$ for all morphisms $(f,Ff) \in mor((F,\mathbf{C},\mathbf{D}))$. Using Proposition~\ref{prop2} it easily follows that the first projection functors sending $(F,\mathbf{C},\mathbf{D}), \mathcal{X} \rtimes_{\mathbf{F}} \mathbf{C},\int_{\mathbf{C}} \bar{\mathbf{F}}$ to $\mathbf{C}$ are the classic split (op)fibrations. On the other hand from Proposition~\ref{prop3} it easily follows that the first projection functors sending $(\mathbb{F},\mathbf{C},\mathbf{Set}),\int_{\mathbf{C}} \bar{\mathbb{F}}, \mathcal{X} \rtimes_{\mathbb{F}} \mathbf{C}$ are the classic split opfibrations. 
\end{proof}

The terminology base-structured categories reflects the fact that these categories have an abstract isomorphism with the base category $\mathbf{C}$. In base-structured categories the objects of $\mathbf{D}$ are trivially categorified which means in essence the structure only coming from the base objects and arrows. It is only when we consider the objects $FX$ (and therefore arrows $Ff$) as non trivial either as structured sets (which enables us to make use of set-theory along with category theory) or itself as category (which enables us to continue in category theory as classic fibrations with non-trivial vertical arrows). Since this is an obvious specialization of fibred categories it seemed appropriate to us to call the entire family consisting of these three categories as \textbf{base-structured categories} where the base is $\mathbf{C}$; since all arrows are Cartesian the essential structure of these categories is that of the base.

We will have more to say on these base-structured categories with their distinct perspective of a functor in applying category theory in fundamental applications such as symmetries and geometries in ~\cite{salilp2} and signal representation leading to arrow-theoretic redundancy in ~\cite{salilp3}.

\section{Conclusion}
\label{conclude}

The family of base structured categories; each characterizes a functor $F: \mathbf{C} \rightarrow \mathbf{D}$ in a unique way:

\begin{itemize}
	\item $(F,\mathbf{C},\mathbf{D})$ category characterizes a functor as a `\textbf{category structure preserving morphism}' or a graph of a functor.
	
	\item $\mathcal{X} \rtimes_{\mathbf{F}} \mathbf{C}$ or $(\int_{\mathbf{C}^{op}} \bar{\mathbf{F}})^{op}$ stems from the perspective of a functor as a multi-object \textbf{abstract left category action}.
	
	\item ${\int_{\mathbf{C}^{op}} \bar{\mathbf{F}}}$ stems from the perspective of a functor as a multi-object \textbf{abstract right category action}.
	
	\item $\mathcal{X} \rtimes_{\mathbb{F}} \mathbf{C}$ stems from the perspective of a functor as a multi-object \textbf{concrete left category action}.
	
	\item ${\int_{\mathbf{C}^{op}} \bar{\mathbb{F}}}$ stems from the perspective of a functor as a multi-object \textbf{concrete right category action}.
	
\end{itemize}

We proved that all of these categories are concretely isomorphic to each other yet only abstractly to the base and signify the fact that the arrows explicitly characterize the base structure since the vertical arrows of the total category are just the trivial identities. This gave rise to a distinct concept of \textbf{trivial categorification} and a new perspective for utilizing the potential of functor in
some fundamental applications which hitherto have been treating objects of $\mathbf{D}$ purely in a set theoretic way. This is dealt in the next papers, specifically~\cite{salilp2} expands the perspective of category action and connects precisely it to the generative theory of \cite{Leyton01} along with possible generalization of Klein geometries whereas~\cite{salilp3} works out a definition of redundancy in arrow-theoretic fashion and signal representation matched to its generative structure using these categories. This includes detailed discussions on compression and information analysis for a given signal to be represented and how many existing standards of image compression using differential encoding techniques could be reformulated precisely as special cases of such a category theoretic framework  .

\bibliography{p1.bib}
\bibliographystyle{acm}

\appendix
\section{Category and Functor}
\label{subsec:cf}
Having introduced by Eilenberg and Mac-Lane first in \cite{eilenberg-maclane}; the standard definition of a category and a functor has evolved somewhat to a form mostly found in today's standard reference text such as \cite{CWM}. For an excellent historical account of this field with an intuitive approach, the reader is referred to \cite{marquis}. Roughly speaking structures of a particular type with morphisms preserving this structure form a \textit{category}.

\begin{definition}\cite{CWM} An abstract category $\mathbf{C}$ consists of:
	\begin{itemize}
		\item \textbf{objects}: a collection $X,Y,Z ...$ denoted by $\mbox{Ob}(\mathbf{C})$
		
		\item \textbf{morphisms}:  for every pair $X,Y \in \mbox{Ob}(\mathbf{C})$, a collection $\mathbf{C}(X,Y)=\{f:X\to Y\mid X,Y\in\mbox{Ob}(\mathbf{C})\}$
		
		\item \textbf{identity}: for each $X \in \mbox{Ob}(\mathbf{C})$, a morphism $\mathbf{id}_X\ \mbox{or}\ \mathbf{1}_X\ :X\to X$
		
		\item \textbf{composition}: $\mathbf{C}(Y,Z)\ \times\ \mathbf{C}(X,Y)\mapsto\mathbf{C}(X,Z)$, i.e $(g,f)\mapsto g\circ f$
		
		\item \textbf{unit laws}: for a morphism $f:X \rightarrow Y$ we have $\mathbf{id}_Y\circ f=f=f\circ \mathbf{id}_X$
		
		\item \textbf{associativity}: for $X \xrightarrow[]{f} Y \xrightarrow[]{g} Z \xrightarrow[]{h} W$ we have $h\circ (g\circ f)=(h\circ g)\circ f$.
	\end{itemize}
	The schematic representation of an abstract category will be shown as 
	\begin{equation*}
	\xymatrix @R=0.4in @C=0.8in{
		X \ar@(dl,ul)[]|{id_X} \ar[r]^{f} \ar[d]_{h \circ g \circ f} \ar[dr]_{g \circ f} & Y \ar@(dr,ur)[]|{id_Y}  \ar[d]^{g} \\
		W \ar@(dl,ul)[]|{id_W} & Z \ar@(dr,ur)[]|{id_Z} \ar[l]_{h}
	}
	\end{equation*}
	
\end{definition}

Just like structures form categories of various sorts, categories themselves are also structures of a particular type with functors as morphisms. Indeed they form a category denoted as \textbf{Cat}. A functor makes coherence between different structures precise and this coherence of transformation is widely referred to as functoriality.

\begin{definition}\cite{CWM}
	Suppose $\mathbf{C}$ and $\mathbf{D}$ are categories.  A map $F:\mathbf{C}\rightarrow \mathbf{D}$ is a (covariant) functor consisting of:
	\begin{itemize}
		\item \textbf{object map}:  to every $X \in \mathbf{C}$, an object $F(X)\in\mbox{Ob}(\mathbf{D})$ 
		
		\item \textbf{morphism map}: to every morphism $f \in \mathbf{C}(X,Y)$, a morphism $F(f):F(X)\to F(Y)$ 
		
		\item \textbf{identity map}: for all $X \in \mathbf{C}$, $F(\mathbf{id}_X)=\mathbf{id}_{F(X)}$
		
		\item \textbf{composition map}: for all morphisms $X \xrightarrow[]{f} Y \xrightarrow[]{g} Z$ $F(f\circ g)=F(f)\circ F(g)$.

	\end{itemize}
	The schematic representation of a functor will be shown as 
	\begin{equation*}
	\xymatrix @R=0.4in @C=0.8in{
		X \ar@(dl,ul)[]|{id_X} \ar[r]^{f} \ar[d]_{h \circ g \circ f} \ar[dr]_{g \circ f} & Y \ar@(dr,ur)[]|{id_Y}  \ar[d]^{g} \\
		W \ar@(dl,ul)[]|{id_W} & Z \ar@(dr,ur)[]|{id_Z} \ar[l]_{h}
	}
	\xymatrix @R=0.4in @C=0.8in{
		FX \ar@(dl,ul)[]|{id_{FX}} \ar[r]^{Ff} \ar[d]_{Fh \circ Fg \circ Ff} \ar[dr]_{Fg \circ Ff} & FY \ar@(dr,ur)[]|{id_{FY}}  \ar[d]^{Fg} \\
		FW \ar@(dl,ul)[]|{id_{FW}} & FZ \ar@(dr,ur)[]|{id_{FZ}} \ar[l]_{Fh}
	}
	\end{equation*}
\end{definition}

\begin{remark}
	The concise schematic diagrams included along with the definitions are meant to graphically illustrate the various axioms stated. These are quite commonly known by the term `commutative diagrams'. Going forward in the sequel, we will make extensive use of these such diagrams. Otherwise and often difficult to grasp deep abstract concepts are simply made visually concrete and intuitive with their usage.
\end{remark}

\begin{definition}~\cite{joy}
	Let $\mathbf{X}$ be a category, then a concrete category over $\mathbf{X}$ is a pair $(\mathbf{C}, U)$, where $\mathbf{C}$ is a category and $U : \mathbf{C} \rightarrow \mathbf{X}$ is a faithful functor. Often $U$ will be called the underlying functor of the concrete category and $\mathbf{X}$ the underlying category for $(\mathbf{C}, U)$.
\end{definition}
In the standard category literature~\cite{joy} the underlying category is commonly called the base category which we avoid for obvious reasons. Concrete categories over $\mathbf{Set}$ are called \textbf{constructs} which are precisely the categories of structured sets and structure-preserving functions between them.

\begin{definition}~\cite{joy}
	Let $(\mathbf{C}, U)$ and $(\mathbf{D}, V)$ be some concrete categories over $\mathbf{X}$, then a concrete functor from
	$(\mathbf{C}, U)$ to $(\mathbf{D}, V)$ is a functor $F: \mathbf{C} \rightarrow \mathbf{D}$ with $U = V \circ F$ and denoted as $F : (\mathbf{C}, U) \rightarrow (\mathbf{D}, V)$.
\end{definition}
The condition $U = V \circ F$ implies that $U(X) = V(FX)$ (where $X$ is an object of $\mathbf{C}$) and $U(f) = V(Ff)$ (where $X$ is an object and $f$ is morphism of $\mathbf{C}$). Thus in the case of constructs, concrete functors ensure that the underlying sets of objects and set functions of morphisms in $\mathbf{C}$ are in bijection with $F\mathbf{C}$. This intuitively means that the information concerning the underlying sets and functions of objects and morphisms is also preserved by the concrete functors. This is distinctly in contrast to an usual \textbf{abstract} functor $F : \mathbf{C} \rightarrow \mathbf{D}$ between the concrete categories $(\mathbf{C}, U)$ and $(\mathbf{D}, V)$ which need not preserve the structure of underlying category $\mathbf{X}$.

\begin{definition}~\cite{joy}
	A functor $F : \mathbf{C} \rightarrow \mathbf{D}$ between categories is called an isomorphism if there is a functor $G : \mathbf{D} \rightarrow \mathbf{C}$ such that $G \circ F = id_{\mathbf{C}}$ and $F \circ G = id_{\mathbf{D}}$.
\end{definition}

\section{Product, Pullback and Limits}
\label{sec:pull}
Before we can assimilate the theory of fibration, we need to revisit the fundamental notion of limit in category theory. We begin with a brief recall of product which is one of the simplest limit to grasp. Then we discuss the actual concept of limit in an intuitive fashion using motivation from \cite{milewski}. Then we move on to define pullbacks as limits which are extremely fundamental to our entire work. Pullbacks are also directly related to the central concept of cartesian lifts utilized in the theory of fibration. The standard reference material for these limits is \cite{CWM},\cite{leinster2014},\cite{lawvere}.

\subsection{Product}
\label{sec:prod}
It is likely that the reader would be familiar with simple cartesian product of two arbitrary sets. This construction is a limit in disguise and provides an example of categorical product in the category $\mathbf{Set}$. The reader especially from signals and information theory background is strongly encouraged to refer \cite{lawvere} to get familiarized with basic constructions such as the cartesian product and overall set theory viewed from the perspective of category theory using the conceptual arrow approach. Here we simply recall the standard definition of product in the category theory to motivate the immediately following general concept of limit.
\begin{definition}[{\cite{CWM} and \cite{leinster2014}}]
	Let $\mathbf{C}$ be a category and let $A,B,C,D \in \mathbf{C}$ then a product of $A$ and $B$ consists of an object $C$ and projection maps $p_1$, $p_2$ with the property that for all objects such as $D$ with maps $q_1$, $q_2$ in $\mathbf{C}$, there exists a unique map $h: D \rightarrow C$ such that $p_1 \circ h = q_1$ and $p_2 \circ h = q_2$. The maps $p_1$ and $p_2$ are called the projections and we write $C$ as $A \times B$. The uniqueness of $h$ is often referred to as the universal property of a product.  
	
	\begin{example}
		Consider two categories $\mathbf{A}$, $\mathbf{B}$ which are objects of a category $\mathbf{Cat}$. This is a category with objects as small categories and arrows as functors. The category $\mathbf{A} \times \mathbf{B}$ is a category with objects the $<a,b>$ pairs  of objects $a$ of $\mathbf{A}$ and $b$ of $\mathbf{B}$. The arrow $<a,b> \rightarrow <a',b'>$ is the pair $<f,g>$ of arrows $f:a \rightarrow a'$, $g:b \rightarrow b'$ from $\mathbf{A}$ and $\mathbf{B}$ respectively. The reader can easily verify that this indeed is a category where everything is done component-wise Further it satisfies the universal property of a product as defined earlier.
	\end{example}
	\begin{figure}[h]	
		\begin{equation*}
		\xymatrix{
			D \ar@/_/[ddr]_{q_1} \ar@/^/[drr]^{q_2}
			\ar@{.>}[dr]|-{h}
			\\
			& A \times B \ar[d]^{p_1} \ar[r]_{p_2}
			& B 
			\\
			& A &
		}
		\end{equation*}
		\caption{Universal property of a product}
		\label{fig:product}
	\end{figure}
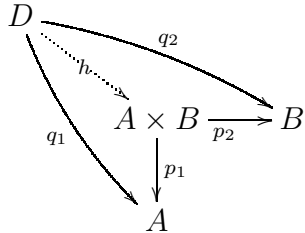
\end{definition}

\subsection{Limits}
\label{subsec:lim}
An overall intuitive, geometric or physical interpretation of a limit is that it embodies the structure or properties of a given diagram $D$ completely in a single object which contains exactly the same amount of information about the whole diagram neither more nor less. However the precise technical definition of this concept in classic reference \cite{CWM} utilizes the concept of natural transformation. Hence the reader might wish considering \cite{milewski} for more visual treatment of this concept. However if the reader is already familiar with concept of natural transformation then the following description recalls limit in a classical manner.

\begin{figure}[h]
	\begin{equation*}
	\xymatrix{
		& & & & & & \Delta_N \ar[rdd] \ar[lldd] \ar[ddd] & & \\
		& & & & & & & & \\
		X \ar@/^1pc/[uurrrrrr] \ar[rrr] \ar@/^1pc/[rrrr] \ar[rrd] & & & Z \ar@/^1pc/[uurrr] \ar@/^1pc/[rrrr] & DX \ar[rrr] \ar[rrd] & & & DZ\\
		& & Y \ar@/^1pc/[rrrr] \ar@/^2pc/[uuurrrr] \ar[ru] & & & & DY \ar[ru] & & \\
	}
	\end{equation*}
	\caption{A functor $D$ and a cone with vertex $N$ }
	\label{fig:cone}
\end{figure}
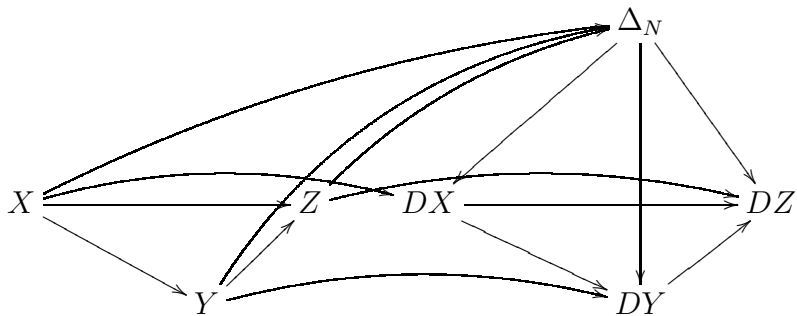
In general we could consider a category $\mathbf{J}$ which is small and often finite to be an abstraction of some finite pattern or structure or shape. Then a functor $D$ from $\mathbf{J}$ to some another category $\mathbf{C}$ is termed as $\mathbf{J}$-shaped diagram in $\mathbf{C}$. It is intuitively thought of indexing some collection of objects and morphisms in $\mathbf{C}$ patterned on $\mathbf{J}$. A constant functor $\mathbf{\Delta}_N$ from $\mathbf{J}$ to $\mathbf{C}$ sends every object to $N$ and every morphism to $id_N$. The natural transformation between these from $\mathbf{\Delta}_N$ to $D$ is called a \textbf{cone} with vertex $N$. This is shown in Figure~\ref{fig:cone} and visually resembles a cone, since the image of $\mathbf{\Delta}_N$ is the apex of a pyramid or cone whose sides are given by the components of the natural transformation with the image of $D$ forming the base of that cone.

\begin{definition}\cite{CWM} 
	A \textbf{limit} of a diagram $D$ is the universal cone (or limiting cone) of a diagram $D$.
	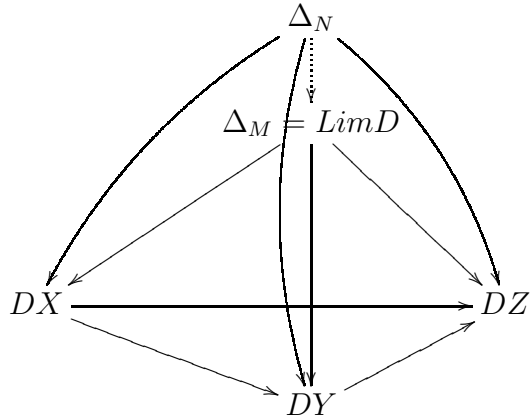
\begin{figure}[h]
		\begin{equation*}
		\xymatrix{
			&& \Delta_N \ar@{.>}[d] \ar@/^1pc/[rddd] \ar@/_1pc/[llddd] \ar@/_1pc/[dddd] && \\
			&& \Delta_M=Lim D \ar[rdd] \ar[lldd] \ar[ddd] && \\
			&&&\\
			DX \ar[rrr] \ar[rrd] &&& DZ\\
			&& DY \ar[ru]
		}
		\end{equation*}
		\caption{Limit of a diagram $D$ }
		\label{fig:limit}
	\end{figure}
\end{definition}

In-fact the cones based on a diagram $D$, form a category with objects as cones and morphisms as factorizing morphisms. Note that cones are entirely determined by their vertices and commuting triangles connecting two cones  factor through a morphism. This also means the universal cone is precisely the terminal object in this cone category. 

\subsection{Pullback}
\label{subsec:pull}
It is extremely crucial that the reader gets familiarized with the concept of pullback both heuristically as well as rigorously since the delicate concept of cartesian lift is completely based on this limit concept. Hence we shall also consider simple additional examples following the definition.

\begin{definition}\cite{CWM}
	Let $\mathbf{J}=(\bullet \rightarrow \bullet \leftarrow \bullet)$, then a functor $F:\mathbf{J} \rightarrow \mathbf{C}$ is a pair of arrows$
	\xymatrix{
		A \ar[r]^{f} & C & B \ar[l]_{g}
	}$
	with a common co-domain $C$. The cone over such a functor is a pair of arrows from a vertex $D$ such that the resulting square shown below (left) commutes. Then a universal cone or a limit (right) with vertex $P=A \times_{C} B$ satisfies the universal property with a unique $h:D \rightarrow A \times_{C} B$ as shown. The square formed by this universal cone is called a
	pullback square and its vertex is called a \textbf{pullback}, a `fibered product`, or a product over (the base object) $C$. Alternately it is said that f arises by pulling back along $g$,and $g'$ arises by pulling back $g$ along $f$.
	
	\begin{equation}
	\xymatrix{
		D \ar@/_/[ddr]_{q_1} \ar@/^/[drr]^{q_2}
		\ar@{.>}[dr]|-{h}
		\\
		& A \times_C B \ar[d]^{p_1=g'} \ar[r]_{p_2=f'}
		& B \ar[d]_g
		\\
		& A \ar[r]^f
		& C
	}
	\label{eq:pullback}
	\end{equation}
\end{definition}

The inner square formed in~\eqref{eq:pullback} is called a pullback or cartesian square. 
Here if we compare the commutative diagram with that of a product then there is an additional vertex which has a non-trivial object $C$ called base which gives it a significant power. Indeed if $C$ is a terminal object in the particular category in which commutative square~\eqref{eq:pullback} is considered then pullback precisely is the common product where we might drop the vertex altogether since the maps $f,g$ are unique without altering the essential definition.

Let us absorb the heuristic that a pullback object $P$ holds a relationship with object $B$ precisely in a way the object $A$ is related to the object $C$ by considering examples now.

\begin{example}
	In the category $\mathbf{FinSet}$ the objects are finite sets and arrows are the usual set functions. Then the pullback is $P = \{(a,b):a \in A, b \in B, f(a)=g(b)\}$. Thus $P$ is a particular subset of the cartesian product set $A \times B$. Intuitively as shown in the Figure~\ref{fig:pullback} this can be interpreted as $C$ indexing and partitioning both $A$ and $B$ and then $P$ is formed by the cartesian product of elements of $A$ and $B$ partition-wise or respecting the indexing structure. Hence note that the elements of $P$ are in the same proportion relative to elements in $B$, which is precisely the proportion of elements of $A$ relative to elements in $C$. Thus the heuristic that the pullback object holds a relationship with object $B$ precisely in a way the object $A$ is related to the object $C$ takes concrete form of relative count of elements in this example.
	
	\begin{figure}
		\begin{center}
			\begin{tikzpicture}[>=latex]

			\draw[thin] (2,2) circle (1);
			\draw[thin] (-2,2) circle (1);
			\draw[thin] (-2,-2) circle (1);
			\draw[thin] (2,-2) circle (1);
			
			\node at (3,3) {$A$};
			\node at (3,-3) {$C$};
			\node at (-3,3) {$P$};
			\node at (-3,-3) {$B$};
			
			\fill (-2,2) circle (0.07cm);
			\fill (-2.5,2.5) circle (0.07cm);
			\fill (-1.5,1.5) circle (0.07cm);

			\fill (-2,-2) circle (0.07cm);
			\fill (-2.5,-1.5) circle (0.07cm);
			\fill (-1.5,-2.5) circle (0.07cm);
			
			\node at (-2.7,-1.6) {$b_1$};
			\node at (-2.2,-2) {$b_2$};
			\node at (-1.7,-2.5) {$b_3$};
			
			\fill (2.5,1.5) circle (0.07cm);
			\fill (1.5,2.5) circle (0.07cm);
			\node at (1.8,2.5) {$a_1$};
			\node at (2.7,1.6) {$a_2$};
			
			\fill (2.5,-2.5) circle (0.07cm);
			\fill (1.5,-1.5) circle (0.07cm);
			\node at (1.8,-1.5) {$c_1$};
			\node at (2.4,-2.7) {$c_2$};
			
			\draw[->-] (-2,2) to (-2,-2); 
			\draw[->-] (-2.5,2.5) to (-2.5,-1.5); 
			\draw[->-] (-1.5,1.5) to (-1.5,-2.5); 
			
			\draw[->-] (-2,-2) to (1.5,-1.5);
			\draw[->-] (-2.5,-1.5) to (1.5,-1.5);
			\draw[->-] (-1.5,-2.5) to (2.5,-2.5);
			
			\draw[->-] (1.5,2.5) to (1.5,-1.5);
			\draw[->-] (2.5,1.5) to (2.5,-2.5);
			
			\draw[->-] (-2.5,2.5) to (1.5,2.5);
			\draw[->-] (-2,2) to (1.5,2.5);
			\draw[->-] (-1.5,1.5) to (2.5,1.5);
			
			\node at (0,2.8) {$p_1$};
			\node at (0,-2.8) {$g$};
			\node at (-2.8,0) {$p_2$};
			\node at (2.8,0) {$f$};
			
			\end{tikzpicture}
			
		\end{center}
		\caption{Heuristic of a Pullback in $\mathbf{FinSet}$}
		\label{fig:pullback}
	\end{figure}
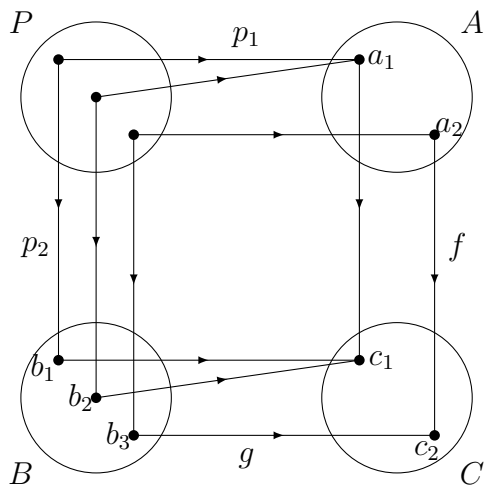
\end{example}
\begin{example}
	Inverse Image
	In the special case following last example, where the $g$ is an inclusion function, $p_1$ or $f'$ can be viewed as an appropriate restriction of $f$ as shown in~\ref{fig:inv}. Here $P = f^{-1}(B) = \{a:a \in A, f(a) \in B \}$.
	\begin{figure}[h]
		\begin{equation*}
		\xymatrix{
			f^{-1}(B) \ar[r]^{f'} \ar@{^{(}->}[d] & B \ar@{^{(}->}[d] \\
			A \ar[r]_{f} & C
		}
		\end{equation*}
		\caption{Inverse image of a function as a pullback}
		\label{fig:inv}
	\end{figure}
\end{example}

\section{Fibred category theory }
\label{fib1}
Having recalled the basic category,functor and pullback we are ready to study the fibred category theory for our work. Although the original reference is [\cite{SGA1} Section VI]; the easier references for us have been \cite{BJ}, \cite{vistoli}. We also have gained much from \cite{barnet}. This section deals explicitly with the traditional fibred category theory. The formal theory arise from the work of Grothendieck in algebraic geometry and deals with indexing of category using another category. In the theory there are two equivalent formulations viz. \textit{category-indexed category} and \textit{categorical fibration}. They are a natural generalization of the concept of set-indexed set to categories, hence it will be easier for us to first consider briefly the essence of set-indexed set for our work.   

\subsection{Set-Indexed Sets}
There are in general two different equivalent ways of presenting the concept of set-indexed sets as studied in \cite{BJ}.

\begin{enumerate}
	\item \textbf{Pointwise or Split Indexing:} 
	In this form of indexing, to every element $i \in I$ some ordinary set $X_i$ is assigned. In the language of category theory, this can be formulated using a functor $\Psi: I \rightarrow \textbf{Set}$. Notice that since in the category $\mathbf{Set}$ (of all small sets with functions) every singleton is an object, we can form a functor from a (discrete) subcategory $I$ with objects corresponding to the elements of set $I$ to the category $\mathbf{Set}$. Note that by the definition of functor the image (category) will precisely contain objects $X_i$.
	
	\item \textbf{Display Indexing:}
	This is simply given by an ordinary function $f: X \rightarrow I$. Heuristically this decomposes or partitions the whole set $X$ into particular subsets, which is specified by $f$. The subsets are termed as fibers over the elements of $I$.
\end{enumerate}

It is important to realize here that the specification of $f$ in turn specifies how $X$ is viewed structurally relative to $I$. Since this particular case deals only with objects, with all arrows trivial, it only specifies how $X$ is partitioned or more precisely $X = \amalg_i X_i$ where the union is necessarily disjoint. However with generalization to arbitrary arrows or general categories it has a far reaching powerful interpretation on how structures are viewed relatively.To forget the indexing or base in this case, simply amounts to forgetting the particular partition induced by this index set.  

Further display indexing is simply characterized by a general set function in which fibers are always disjoint (being inverse images of this function), it readily generalizes to categories as compared to point-wise indexing. The function displays the family over a base, hence the terminology.
\begin{example}
	The constant family (or trivial fibration) consists of all fibres being isomorphic to one set $X_i$.
\end{example}
\begin{example} 
	The arrow category $\mathbf{Set}^{\rightarrow}$ which contains set-indexed sets or a family as objects with morphisms preserving the indexing structure. The category $\mathbf{Set}^{\rightarrow}$ consists of:
	\begin{itemize}
		\item \textbf{objects}: a collection of all set-indexed sets of form $\left(
		\def\objectstyle{\scriptstyle}
		\def\labelstyle{\scriptstyle}
		\vcenter{\xymatrix @-1.2pc
			{
				X \ar[d]^{f} \\
				I
		}}
		\right)$
		
		\item \textbf{morphisms}: for every pair $\left(
		\def\objectstyle{\scriptstyle}
		\def\labelstyle{\scriptstyle}
		\vcenter{\xymatrix @-1.2pc
			{
				X \ar[d]^{f} \\
				I
		}}
		\right)$
		$\left(
		\def\objectstyle{\scriptstyle}
		\def\labelstyle{\scriptstyle}
		\vcenter{\xymatrix @-1.2pc
			{
				Y \ar[d]^{g} \\
				J
		}}
		\right)$
		a morphism $(u,h)$ where $u:I \rightarrow J$ and $h:X \rightarrow Y$ with $g \circ h=u \circ f$
		
		\item \textbf{identity}: for each $\left(
		\def\objectstyle{\scriptstyle}
		\def\labelstyle{\scriptstyle}
		\vcenter{\xymatrix @-1.2pc
			{
				X \ar[d]^{f} \\
				I
		}}
		\right)$, a morphism $(\mathbf{id}_I,\mathbf{id}_X)$ or pair of identities from \textbf{Set}
	\end{itemize}
	satisfying the usual axioms of a category such as composition, unit laws and associativity. The special case of $\mathbf{Set}/{I}$ widely known as \textit{Slice Category} is obtained for a fixed index set $I$.
\end{example}

\subsection{Change of Base}
For a given family,
$\left(
\def\objectstyle{\scriptstyle}
\def\labelstyle{\scriptstyle}
\vcenter{\xymatrix @-1.2pc
	{
		X \ar[d]^{f} \\
		I
}}
\right)$
the change of base is achieved using the pull-back operation as shown in the Figure~\ref{fig:basechange}
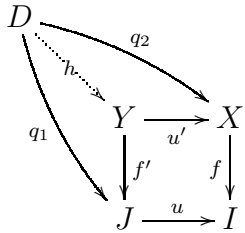
\begin{figure}[h]
	\begin{equation*}
	\xymatrix{
		D \ar@/_/[ddr]_{q_1} \ar@/^/[drr]^{q_2}
		\ar@{.>}[dr]|-{h}
		\\
		& Y \ar[d]^{f'} \ar[r]_{u'}
		& X \ar[d]_f
		\\
		& J \ar[r]^u
		& I
	}
	\end{equation*}
	\caption{Change of base for set-indexed sets.}
	\label{fig:basechange}
\end{figure}

Denote this pull-back of $f$ against $u$ as $Y$ which is $J \times_I X$. Given any function $f:X \rightarrow I$, we can interpret this as a decomposition of the set $X$ into subsets $f^{-1}(i)$, indexed by the elements $i \in I$. This way of looking at a function is the essential essence underlying the idea of fibration where $X$ is termed as being fibered on $I$ and $f^{-1}(i)$ are the fibres of $X$ at $i \in I$. Remember that $I$ which will be referred to as base in the general categorical framework, sets up a partition of $X$ as the fibres are necessarily disjoint.

\subsection{Fibration}
Generalizing the concept of collection of sets varying over an index set to categories, we get the concept of category indexed categories corresponding to point-wise indexing and fibration corresponding to display indexing. The theory of fibred categories makes this concept of collections of categories that vary over a base category mathematically precise. It is also seen as as ordinary category theory over some base category (thought of as a universe). First we consider fibration and later we study category-indexed categories. The fundamental construction called \textbf{Grothendieck completion} turns category-indexed-categories into fibration is explored thereafter.

We begin with recalling the definition of fibration.
\begin{definition}[\cite{BJ}]
	Let $P:\mathbf{E} \rightarrow \mathbf{B}$ be a usual functor;
	\begin{itemize}
		
		\item A morphism $f:X \rightarrow Y$ in $\mathbf{E}$ is cartesian over $u:I \rightarrow J$ in $\mathbf{B}$ if $Pf=u$ and every $g:Z \rightarrow Y$ in $\mathbf{E}$ for which one has $Pg=uw$ for some $w:PZ \rightarrow I$, uniquely determines an $h:Z \rightarrow X$ in $\mathbf{E}$ above $w$ with $fh=g$.
		
		\begin{equation*}
		\xymatrix{ & \\
			\mathbf{E} \ar[dd]_{P} \\ 
			& \\
			\mathbf{B}}
		\qquad
		\xymatrix{ Z \ar@{|->}[dd] \ar@{.>}[rd]_{h} \ar[rdrr]^{g} & & \\
			& X \ar@{|->}[dd] \ar[rr]_{f} & & Y \ar@{|->}[dd] \\
			PZ \ar[rd]_{w} \ar[rdrr]^{u \circ w =Pg}  & & \\
			& I \ar[rr]_{u} & & J }
		\end{equation*}
		
		\item The functor $P:\mathbf{E} \rightarrow \mathbf{B}$ is a fibration (or a fibred category) if for every $Y \in \mathbf{E}$ and $u:I \rightarrow PY$ in $\mathbf{B}$, there is a cartesian morphism $f:X \rightarrow Y$ in $\mathbf{E}$ above $u$.
	\end{itemize}
\end{definition}

This uniqueness resulting in $\mathbf{E}$ is termed as the unique lifting property. The concept of cartesian lift is very closely related to pullbacks in ordinary categories. To expose this connection heuristically we use the analogy of mixed categories from \cite{barnet}. For this consider a category in which we mix objects and arrows from $\mathbf{E}$ and $\mathbf{B}$ as specified:
\begin{itemize}
	\item \textbf{objects}  All objects from both $\mathbf{E}$ and $\mathbf{B}$ i.e $X,Y,.. \in \mathbf{E}$ and $I,J,.. \in \mathbf{B}$
	\item \textbf{morphisms} All $f,g,.. \in \mathbf{E}$, $u,v,.. \in \mathbf{B}$ with $P_X: X \rightarrow I$ where $PX = I$ quotiented by $Pf = u$
	\item \textbf{identity} identities from both $\mathbf{E}$ and $\mathbf{B}$
	\item \textbf{composition} usual compositions and cross compositions such as $u \circ P_X$ and $P_Y \circ f$
\end{itemize}

with usual unit laws and associativity law. Note that by fiat we declare that the (mixed) diagrams of the type shown below truly commute only when $Pf=u$ or $u \circ P_X =  P_Y \circ f$ as shown in Figure~\ref{fig:comm}
\begin{figure}[h]
	\begin{equation*}
	\xymatrix{
		X \ar[r]^{f} \ar@{|->}[d]_{P_X} & Y \ar@{|->}[d]^{P_Y} \\
		I \ar[r]_{u} & J
	}
	\end{equation*}
	\caption{Commutative squares in the mixed category}
	\label{fig:comm}
\end{figure}
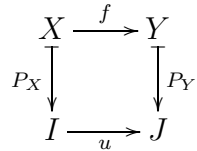
where ${P_X}$ is the arrow $X \mapsto I$, the usual restriction of the functor $P$ to $X$ whereas $f$ and $u$ are the general arrows in $\mathbf{E}$ and $\mathbf{B}$ respectively.

Now note that for $f:X \rightarrow Y$ to be cartesian precisely means that the resulting square is a pullback or cartesian square in this mixed union category. The usual uniqueness of this pullback object $X$ could be expressed using the Figure ~\ref{fig:liftaspull}
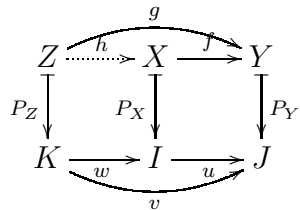
\begin{figure}[h]
	\begin{equation*}
	\xymatrix{
		Z \ar@/^1pc/[rr]^{g} \ar@{|->}[d]_{P_Z} \ar@{.>}[r]^{h} & X \ar[r]^{f} \ar@{|->}[d]_{P_X} & Y \ar@{|->}[d]^{P_Y} \\
		K \ar@/_1pc/[rr]_{v} \ar[r]_{w} & I \ar[r]_{u} & J
	}
	\end{equation*}
	\caption{Cartesian lift as a pullback in mixed category}
	\label{fig:liftaspull}
\end{figure}
which precisely matches the definition of the cartesian lift of $u$ at $Y$.

Hence most of the properties of cartesian arrows which we gather now for our work in the sequel, follow naturally from the usual properties of the pullback in ordinary categories. In particular the intuition for the cartesian lift of a general arrow precisely follows from intuition regarding the pullback we discussed earlier. This essentially says that $X$ relative to $Y$ is exactly similar to $I$ relative to $J$. In fibration $\mathbf{E}$ is thought of as being above $\mathbf{B}$.

\subsection{Basic Properties of Fibration}
Here we recall certain standard results pertaining to the total category $\mathbf{E}$ which will be required later in the sequel.
\begin{proposition}\cite{BJ}
	\label{prop:vert_cart}
	Every morphism in $\mathbf{E}$ factors as a vertical map followed by a cartesian one.
\end{proposition}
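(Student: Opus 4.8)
The statement to prove (Proposition~\ref{prop:vert_cart}): Every morphism in $\mathbf{E}$ factors as a vertical map followed by a cartesian one.

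The plan is to construct the factorization explicitly from the fibration structure. Let $\phi : X \to Y$ be an arbitrary morphism in $\mathbf{E}$, and set $u := P\phi : PX \to PY$ in $\mathbf{B}$. Since $P$ is a fibration, for the object $Y$ and the morphism $u : PX \to PY$ there is a cartesian lift $\bar{u}(Y) : u^*Y \to Y$ lying above $u$; I would write $X' := u^*Y$ and $\psi := \bar{u}(Y)$, so $P\psi = u$ and $PX' = PX$. The first step is to observe that $\phi$ itself is a candidate for the ``competing'' morphism in the universal property of the cartesian arrow $\psi$: indeed $P\phi = u = u \circ \mathrm{id}_{PX}$, so taking $w := \mathrm{id}_{PX} : PX \to PX'$ in the defining diagram of a cartesian morphism, the universal property yields a unique morphism $v : X \to X'$ in $\mathbf{E}$ above $\mathrm{id}_{PX}$ with $\psi \circ v = \phi$.

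The second step is simply to check that $v$ is vertical and $\psi$ is cartesian, which are both now immediate: $Pv = \mathrm{id}_{PX}$ by construction, so $v$ is a vertical morphism (a morphism lying over an identity), and $\psi = \bar{u}(Y)$ is cartesian by the choice of cleavage in the fibration. Hence $\phi = \psi \circ v$ is the desired factorization into a vertical map $v : X \to X'$ followed by a cartesian map $\psi : X' \to Y$. I would accompany this with a small commuting diagram showing $X$, $X' = u^*Y$, $Y$ upstairs and $PX$, $PX'=PX$, $PY$ downstairs, with $v$ over $\mathrm{id}_{PX}$ and $\psi$ over $u$, exactly mirroring the ``cartesian lift as a pullback'' picture in Figure~\ref{fig:liftaspull}.

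There is no real obstacle here; the only point requiring a sentence of care is the application of the universal property with $w = \mathrm{id}$, making explicit that the factoring arrow $v$ produced by the unique lifting property is precisely above the identity and is therefore vertical. If one wishes the factorization to be unique up to vertical isomorphism, I would add a remark: given any other factorization $\phi = \psi' \circ v'$ with $\psi'$ cartesian over $u$ and $v'$ vertical, the two cartesian arrows $\psi$ and $\psi'$ over the same $u$ with the same codomain $Y$ are related by a unique vertical isomorphism, which then intertwines $v$ and $v'$; but for the bare statement as written, the explicit construction above suffices.
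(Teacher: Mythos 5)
Your proof is correct and follows essentially the same route as the paper's: take the cartesian lift of $u = P\phi$ at the codomain, then use its universal property with $w = \mathrm{id}$ to obtain the unique factoring arrow, which is vertical because it lies over the identity. If anything, your explicit observation that the factoring arrow lies over $\mathrm{id}_{PX}$ (and is therefore vertical by definition) is slightly cleaner than the paper's phrasing of that final step.
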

\begin{proof}
	Consider an arbitrary morphism $g:Z \rightarrow Y$ in the total category $\mathbf{E}$. Using definition of a functor $P$, let $PZ=I$ and $PY=J$ and $P(g)=u$ where $u:I \rightarrow J$. But since $P:\mathbf{E} \rightarrow \mathbf{B}$ is also a fibration it follows that $Z \in \mathbf{E}_I $ and $Y \in \mathbf{E}_J $ and there must exist cartesian lift $u$ at $Y$. Let this cartesian arrow above $u$ be $f$ then we have a unique $h$ with $g=f \circ h$
	\begin{displaymath}
	\xymatrix { Z \ar [rd]^{g} \ar @{.>}[d]^{h} &  \\
		X \ar [r]^{f} & Y }
	\end{displaymath}
	But since both $f$ and $g$ lie above $u$, $Z$ and $X$ must lie above $I$ or in the fiber $\mathbf{E}_I$. Thus the resulting unique $h$ is a vertical map.
\end{proof}
Proposition~\ref{prop:vert_cart} can be interpreted as saying that the total structure in the fibred category is a fusion of horizontal and vertical structure along with the action of horizontal on vertical. 

\begin{proposition}\cite{BJ}
	Composition of cartesian arrows in $\mathbf{E}$ is also a cartesian arrow.
\end{proposition}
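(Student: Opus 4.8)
The plan is to prove this directly from the universal (unique lifting) property of cartesian morphisms, mirroring the pasting lemma for pullback squares alluded to in the discussion of the mixed category. Let $f\colon X \to Y$ be cartesian over $u\colon I \to J$ and $f'\colon Y \to W$ be cartesian over $u'\colon J \to K$; I must show that $f' \circ f\colon X \to W$ is cartesian over $u' \circ u$. First observe that $P(f'\circ f) = Pf' \circ Pf = u' \circ u$, so the composite does lie above $u' \circ u$ as required.

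For the lifting property, I would take an arbitrary $g\colon Z \to W$ in $\mathbf{E}$ together with a factorization $Pg = (u' \circ u)\circ w$ for some $w\colon PZ \to I$, and produce the required mediating map in two stages. Since $Pg = u' \circ (u \circ w)$ with $u \circ w\colon PZ \to J$, the cartesian property of $f'$ over $u'$ yields a unique $k\colon Z \to Y$ above $u \circ w$ with $f' \circ k = g$. Then, since $Pk = u \circ w$, the cartesian property of $f$ over $u$ yields a unique $h\colon Z \to X$ above $w$ with $f \circ h = k$. The composite satisfies $(f'\circ f)\circ h = f'\circ(f\circ h) = f'\circ k = g$ and $Ph = w$, which gives existence.

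Uniqueness follows by running the two universal properties in the same order. Suppose $h'\colon Z \to X$ is above $w$ with $(f'\circ f)\circ h' = g$. Setting $k' = f \circ h'$ gives $Pk' = u \circ w$ and $f' \circ k' = g$, so $k' = k$ by the uniqueness clause for $f'$; then $f \circ h' = k = f \circ h$ with $Ph' = w = Ph$, so $h' = h$ by the uniqueness clause for $f$.

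I do not expect a genuine obstacle here — the only point to get right is the order of application: peel off the ``outer'' cartesian arrow $f'$ first to factor through $Y$, then apply the ``inner'' cartesian arrow $f$. This is exactly the two-square pasting lemma for pullbacks transported along the mixed-category analogy. A fully diagram-chasing alternative would be to stack the two cartesian squares in the mixed category of Figure~\ref{fig:liftaspull} and invoke that a stack of pullback squares is again a pullback, but the direct argument above is shorter and self-contained.
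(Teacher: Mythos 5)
Your proof is correct and follows essentially the same route as the paper's: factor the test arrow through the cartesian arrow nearer the codomain first, then through the other, and chase uniqueness back in the same order. In fact your version is the more complete of the two, since you track the base arrows ($w$, $u\circ w$) explicitly and spell out the two-step uniqueness argument, both of which the paper's proof leaves implicit.
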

\begin{proof}
	Let $f$ and $g$ be the cartesian arrows above some $u$ and $v$ in the base category. Then for some arbitrary arrow $j$ from a fixed object $W$ to $Y$ we have a unique $h_1$ such that $f \circ h_1 = j$. 
	\begin{equation*}
	\xymatrix{ & W \ar@{.>}[ld]_{h_2} \ar@{.>}[dd]^{h_1} \ar[rddr]^{j} & \\  
		Z \ar[rd]_{g} \ar[rdrr]^{f \circ g} & & \\
		& X \ar[rr]^{f} & & Y \\
	}
	\end{equation*}
	But since $g$ is cartesian, for the same fixed object $W$ and arrow $h_1$ to $X$, we have a unique $h_2$ such that $g \circ h_2 = h_1$. Noting that all inner triangles commute, we have for an arbitrary arrow $j$ a unique $h_2$ such that $(f \circ g) \circ h_2 = j$ proving that $f \circ g$ is cartesian. 
\end{proof}

\begin{proposition}\cite{BJ}
	In $\mathbf{E}$ Cartesian map above an isomorphism is also an isomorphism.
\end{proposition}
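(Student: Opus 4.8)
The plan is to produce an explicit two-sided inverse for the Cartesian arrow using the universal property, and to show it is two-sided by a second application of that same property. Fix notation: let $P:\mathbf{E}\rightarrow\mathbf{B}$ be the fibration, let $f:X\rightarrow Y$ be a Cartesian morphism over an isomorphism $u:I\rightarrow J$ in $\mathbf{B}$, so that $PX=I$, $PY=J$ and $Pf=u$, and write $u^{-1}:J\rightarrow I$ for the inverse, with $u\circ u^{-1}=\mathrm{id}_J$ and $u^{-1}\circ u=\mathrm{id}_I$.

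First I would construct the candidate inverse. Apply the Cartesian property of $f$ to the test morphism $\mathrm{id}_Y:Y\rightarrow Y$: one has $P(\mathrm{id}_Y)=\mathrm{id}_J=u\circ u^{-1}$ with $u^{-1}:PY=J\rightarrow I$, so there is a \emph{unique} $g:Y\rightarrow X$ lying above $u^{-1}$ with $f\circ g=\mathrm{id}_Y$. This already yields a right inverse for $f$ and records that $g$ sits above $u^{-1}$.

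Next I would verify that $g$ is also a left inverse. Since $g$ lies above $u^{-1}$ and $f$ above $u$, the composite $g\circ f:X\rightarrow X$ lies above $u^{-1}\circ u=\mathrm{id}_I$, i.e.\ it is a vertical endomorphism of $X$; so is $\mathrm{id}_X$. Now compute $f\circ(g\circ f)=(f\circ g)\circ f=\mathrm{id}_Y\circ f=f$, and also $f\circ\mathrm{id}_X=f$. Invoking the \emph{uniqueness} clause of the Cartesian property with $Z=X$, test arrow $f:X\rightarrow Y$, and $w=\mathrm{id}_I$ (legitimate since $Pf=u=u\circ\mathrm{id}_I$): there is exactly one arrow $h:X\rightarrow X$ above $\mathrm{id}_I$ with $f\circ h=f$. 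Both $g\circ f$ and $\mathrm{id}_X$ qualify, hence $g\circ f=\mathrm{id}_X$, and $f$ is an isomorphism with inverse $g$.

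I do not expect a genuine obstacle; the one point deserving care is that the last step must use the uniqueness, not merely the existence, part of the Cartesian universal property, and one must keep track of the base arrows ($g$ above $u^{-1}$, hence $g\circ f$ above $\mathrm{id}_I$) so that the relevant arrows really are vertical. As an alternative I could read off the result from the ``mixed category'' picture of Figure~\ref{fig:liftaspull}, where $f$ Cartesian above an isomorphism makes the square a pullback along an isomorphism, and such a pullback is an isomorphism by the usual pullback lemma; I would mention this as a remark but present the direct argument above as the actual proof.
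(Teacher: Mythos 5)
Your proof is correct, and it takes a genuinely different (and in fact tighter) route than the paper's. The paper does not use the universal property of $f$ at all: it instead invokes the \emph{fibration} property to produce a Cartesian lift $g$ of $v=u^{-1}$ at $X$, appeals to the previously proved lemma that a composite of Cartesian arrows is Cartesian, and then asserts that the resulting Cartesian arrow $f\circ g$ above $\mathrm{id}_J$ ``must be an identity arrow.'' That last step is the weak point of the paper's argument -- a Cartesian arrow over an identity is in general only a vertical isomorphism, not literally $\mathrm{id}_Y$, unless one has fixed a normalized cleavage -- so the paper's proof as written is imprecise exactly where yours is careful. Your argument, by contrast, uses only the existence and uniqueness clauses of the Cartesian property of the single arrow $f$: existence (applied to $\mathrm{id}_Y$ over the factorization $\mathrm{id}_J=u\circ u^{-1}$) manufactures the right inverse $g$, and uniqueness (applied to the test arrow $f$ over $u=u\circ\mathrm{id}_I$) forces $g\circ f=\mathrm{id}_X$. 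This buys you two things: the statement holds for any Cartesian arrow over an isomorphism with respect to an arbitrary functor $P$, without assuming $P$ is a fibration or that other Cartesian lifts exist; and it needs neither the composition lemma nor any cleavage. Your bookkeeping of base arrows ($g$ above $u^{-1}$, hence $g\circ f$ above $\mathrm{id}_I$) is exactly what makes the uniqueness step legitimate, and your closing remark about the pullback-in-the-mixed-category reading is consistent with the heuristic the paper develops in its Figure on Cartesian lifts as pullbacks.
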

\begin{proof}
	Let a $\mathbf{B}$-arrow $u:I \rightarrow J$ be an isomorphism. This implies by the definition of isomorphism that there exists another $\mathbf{B}$-arrow $v:J \rightarrow I$ such that $u \circ v = id_J$ and $v \circ u = id_I $. Now let the cartesian map above u be $f:X \rightarrow Y$ for a given $Y$ and the cartesian map above v for the $X$ be $g:Z \rightarrow X$. Then the vertical map $f \circ g$ is also cartesian as the composition of cartesian maps is cartesian and since $u \circ v = id_J$ therefore it must be an identity arrow or $f \circ g = 1_Y$  implying $Z=Y$.
	\begin{displaymath}
	\xymatrix { & Z \ar [ld]_{g} \ar@{>}[d]^{f \circ g} &  \\
		X \ar [r]^{f} & Y }
	\end{displaymath}
	similarly in the other direction one can show that $g \circ f = 1_X$. Thus the cartesian lift of an isomorphism is also an isomorphism.
\end{proof}

\subsection{Cloven and Split Fibration}
\label{cs}
Since mostly we will need strict functors rather than pseudo-functors, the reader may skip this section without any harm. We include these two kinds of fibrations to get familiarized with the classic fibred category theory associated with pseudo-functors. The definition of fibration guarantees that the cartesian lift (or the pullback completion) for every possible $u$ and $Y$ exists but keeps the choice unspecified. Indeed such a pullback object is only unique up to a unique isomorphism, so one has to explicitly make a choice of cartesian lift. When the choice is made (upto vertical isomorphism) as explained next, the fibration is called \textbf{cloven} and it equivalently corresponds to a pseudo-functor or category-valued presheaf $\textbf{B}^{op} \rightarrow \textbf{Cat}$.

Fist let us fix the notation and diagram mostly following \cite{BJ} for the cartesian lift of $u$ at $X$ as 

\begin{equation*}
\xymatrix{
	u^*(X) \ar@{.>}[r]^{\bar{u}(X)} & X \\
}
\end{equation*}

Then for a general vertical morphism $f:Y_1 \rightarrow Y_2$ between two arbitrary objects in the fibre over $J$, we have
\begin{equation*}
\xymatrix{
	u^*(Y_1) \ar@{.>}[r]^{\bar{u}(Y_1)} \ar@{-->}[d]_{u^*(f)} & Y_1 \ar@{->}[d]^{f} \\
	u^*(Y_2) \ar@{.>}[r]_{\bar{u}(Y_2)} & Y_2 \\
	I \ar[r]^{u} & J
}
\end{equation*}

Thus every map $u:I \rightarrow J$ in $\textbf{B}$ determines a functor $u^*$ in a reverse direction from the whole fibre $\textbf{E}_J$ to the whole fibre $\textbf{E}_I$. Such a functor is referred to as \textbf{change-of-base} or \textbf{pullback functor}.

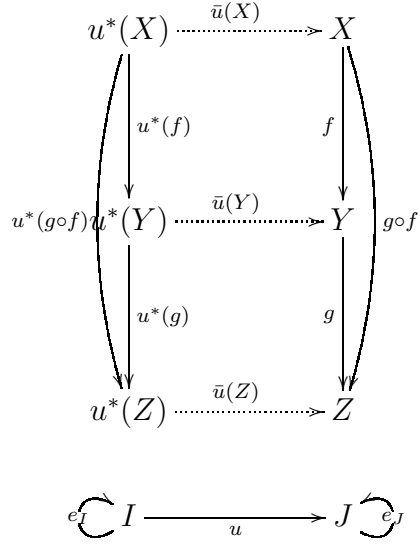
\begin{figure}[!h]
	\begin{equation*}
	\xymatrix{
		u^*(X) \ar@/_1pc/[dddd]_{u^*(g \circ f)} \ar[dd]^{u^*(f)} \ar@{.>}[rr]^{\bar{u}(X)} &  & X \ar@/^1pc/[dddd]^{g \circ f} \ar[dd]_{f} \\
		& & \\
		u^*(Y) \ar[dd]^{u^*(g)} \ar@{.>}[rr]^{\bar{u}(Y)} &  & Y \ar[dd]_{g}  \\
		& & \\
		u^*(Z) \ar@{.>}[rr]^{\bar{u}(Z)} &  & Z \\
		I \ar@(dl,ul)[]|{e_I} \ar[rr]_{u} &  & J \ar@(dr,ur)[]|{e_J}
	}
	\end{equation*}
	\caption{Pullback functor from $\textbf{E}_J$ to $\textbf{E}_I$ determined by cartesian lifts}
	\label{fig:indexedcat}
\end{figure}

For identities and composition of arrows in general we have canonical natural transformations as shown in Figures~\ref{fig:cloven1},~\ref{fig:cloven2} respectively.
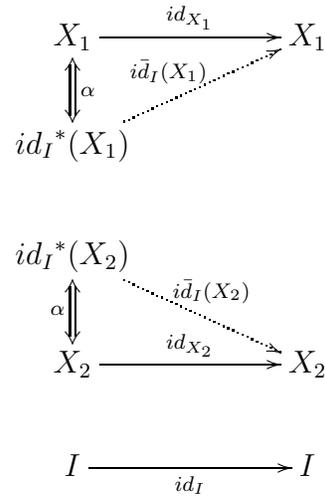
\begin{figure}[!h]
	\begin{equation*}
	\xymatrix{
		X_1 \ar@2{<->}[d]^{\alpha} \ar@{>}[rr]^{id_{X_1}} &  & X_1 \\
		{id_I}^*(X_1) \ar@{.>}[rru]^{\bar{id_I}(X_1)} & & \\
		{id_I}^*(X_2) \ar@{.>}[rrd]^{\bar{id_I}(X_2)} & & \\
		X_2 \ar@2{<->}[u]^{\alpha} \ar@{>}[rr]^{id_{X_2}} &  & X_2 \\
		I \ar[rr]_{id_I} &  & I
	}
	\end{equation*}
	\caption{Cartesian lift of identity in a cloven fibration}
	\label{fig:cloven1}
\end{figure}

\begin{figure}[!h]
	\begin{equation*}
	\xymatrix{
		u^*v^*(Z) \ar@{>}[rr]^{\bar{u}[v^*(Z)]} &  & v^*(Z) \ar@{>}[rr]^{\bar{v}(Z)} & & Z \\
		(v \circ u)^*(Z) \ar@2{<->}[u]^{\beta} \ar@{.>}[rrrru]_{\bar{(v \circ u)}(Z)} & &  & & & \\
		I \ar@/^1pc/[rrrr]^{v \circ u} \ar[rr]_{u} &  & J \ar[rr]_{v} & & K
	}
	\end{equation*}
	\caption{Cartesian lift of composite arrow in a cloven fibration}
	\label{fig:cloven2}
\end{figure}

When the cloven fibration is such that induced pullback functors make the natural transformations actual identities, then its is called a \textbf{split fibration}. Since split fibrations guarantee these functoriality conditions, the pseudo-functor of the cloven case actually reduces to becoming a true functor. Thus split fibrations are said to be well-behaved and comfortable to work with when such a case is possible. Fortunately most of the cases we will encounter for the purposes of signal representation would be split and would correspond to strict functors such as $L^2$. This is because from an applied perspective we mostly deal with linear spaces having single object in the fibres when they are viewed as categories. Yet it should be noted that often such equalities are not guaranteed in general for categories and the cleavages result in natural isomorphisms (instead of equalities) as discussed.
For more complete and delicate issues the reader is referred to \cite{BJ}.

\subsection{B-Indexed Category and Grothendieck Construction}
\label{cic}
We now recall the definition of equivalent notion of point-wise indexing of sets in the categories which goes by the terminology of category-indexed categories. 
\begin{definition}\cite{BJ}
	A $\mathbf{B}$-indexed category is a pseudo functor $\Psi:\mathbf{B}^{op} \rightarrow \mathbf{Cat}$. It maps each object $I \in \mathbf{B}$ to a category $\Psi(I)$ and each morphism $u:I \rightarrow J \in \mathbf{B}$ to a functor $\Psi(u):\Psi(J) \rightarrow \Psi(I)$ with direction reversed. Such a functor $\Psi(u)$ is denoted by $u^*$, and a pseudo-functor unlike a strict functor involves natural isomorphisms $\alpha_I: id \approxeq (id_I)^*$ and $\beta_{u,v}:u^*v^* \approxeq (v \circ u)^*$ for usual objects $I,J,K$ and arrows $u,v$ in $\mathbf{B}$ satisfying the classic coherence conditions.
\end{definition}

Because of the natural isomorphisms for a general cloven fibration, indexed category must satisfy certain coherence conditions common in categories as shown in Figure~\ref{fig:coherence1} and Figure~\ref{fig:coherence2}. 
\begin{figure}[!h]
	\begin{equation*}
	\xymatrix{
		& u^* \ar@{=}[d] \ar[dl]_{\alpha_Iu^*} \ar[dr]^{u^*\alpha_J} & \\
		(id_I)^*u^* \ar[r]_{\beta_{id_{I},u}} & u^* & u^*(id_J)^* \ar[l]^{\beta_{u,id_{J}}}
	}
	\end{equation*}
	\caption{Coherence conditions for $u:I \rightarrow J$}
	\label{fig:coherence1}
\end{figure}

\begin{figure}[!h]
	\begin{equation*}
	\xymatrix{
		u^*,v^*,w^* \ar[r]^{u^*\beta_{v,w}} \ar[d]_{\beta_{u,v}w^*} & u^*(w \circ v)^* \ar[d]^{\beta_{u,w \circ v}} \\
		(v \circ u)^*w^* \ar[r]_{\beta_{v \circ u,w}} & (w \circ v \circ u)^*
	}
	\end{equation*}
	\caption{Coherence conditions for $I \rightarrow^{u} J \rightarrow^{v} K \rightarrow^{w} L$}
	\label{fig:coherence2}
\end{figure}

Next we recall the classic definition of Grothendieck construction. Immediately follows Proposition~\ref{strict} which will facilitate the understanding of why and how this construction turns a pseudo-functor into a fibration. But we demonstrate this only for the case of a strict functor which suffices for our requirements in an applied context. 

\begin{definition}\cite{BJ}
	Let $\Psi: \mathbf{B}^{op} \rightarrow \mathbf{Cat}$ be an indexed category. Then \textbf{Grothendieck Construction} $\int_{\mathbf{B}}\Psi$ of $\Psi$ is the total category consisting of:
	\begin{itemize}
		\item \textbf{objects} $(I,X)$ where $I \in \mathbf{B}$ and $X \in \Psi(I).$
		\item \textbf{morphisms} $(I,X) \rightarrow (J,Y)$ are the pairs $(u,f)$ with $u:I \rightarrow J$ in $\mathbf{B}$ and $f:X \rightarrow u^*(Y)=\Psi(u)(Y)$ in $\Psi(I)$.
		\item \textbf{identity} $(I,X) \rightarrow (I,X)$ is pair $(id,\alpha_{I}(X))$, with $\alpha_{I}$ the natural isomorphism $id_{\Psi(I)} \approxeq (id_I)^*$.
		\item \textbf{composition}
		$\xymatrix{
			(I,X) \ar[r]^{(u,f)} & (J,Y) \ar[r]^{(v,g)} & (K,Z) 
		}$
		where
		$X \rightarrow^{f} u^*(Y) \rightarrow^{u^*(g)} u^*v^*(Z) \approxeq (v \circ u)^*(Z)$
	\end{itemize}
	satisfying the usual unit laws and associativity axioms with the coherence conditions guaranteeing equalities for identity and composition as shown earlier in Figures~\ref{fig:coherence1} and \ref{fig:coherence2}.  
\end{definition}
The schematic is for general construction with cleavage will be denoted as 
\begin{equation*}
\xymatrix{
	(I,X_1) \ar[d]_{(id,f_1)} \ar@{>}[rr]^{(u,f_1)} &  & (J,Y_1) \\
	(I,u^*(Y_1)) \ar@{.>}[rru]^{\bar{u}(J,Y_1)} & & \\
	(I,u^*(Y_2)) \ar@{.>}[rrd]^{\bar{u}(J,Y_2)} & & \\
	(I,X_2) \ar[u]^{(id,f_2)} \ar@{>}[rr]^{(u,f_2)} &  & (J,Y_2) \\
	I \ar@(dl,ul)[]|{id} \ar[rr]_{u} &  & J
}
\end{equation*}

\begin{equation*}
\xymatrix{
	(I,X) \ar[d]^{f} \ar@{>}[rr]^{(u,f)} &  & (J,Y) \ar[d]^{g} \ar@{>}[rr]^{(v,g)} & & (K,Z) \\
	u^*(Y) \ar[d]^{u^*(g)} \ar@{.>}[rru]^{\bar{u}(Y)} & & v^*(Z) \ar@{.>}[rru]^{\bar{v}(Z)} & & & \\
	u^*v^*(Z) \ar@2{<->}[d]^{\beta} \ar@{.>}[rru]^{\bar{u}[v^*(Z)]} & & \\
	(v \circ u)^*(Z) \ar@{.>}[rrurruu]_{\bar{(v \circ u)}(Z)} & & \\
	I \ar@/^1pc/[rrrr]^{v \circ u} \ar@(dl,ul)[]|{e_I} \ar[rr]_{u} &  & J \ar@(dr,ur)[]|{e_J} \ar[rr]_{v} & & K \ar@(dr,ur)[]|{e_K}
}
\end{equation*}
Note that in the general cloven schematic of fibred category in the composition we have shown only second component of all objects and arrows except for the first row for simplicity to avoid clutter.
The reader must exercise caution with notations since in the fibration definition we have made use of the notation $X$ for an object above $I$ in $\mathbf{E}_I$; however the same notation $X$ for an object in $\Psi(I)$ will result into an object $(I,X)$ above $I$ when translated into the fibration terminology. The reader must bear this distinction in mind even if we slightly abuse and use the similar notations such as $X$ in both cases of indexed categories and fibration for simplicity.

\begin{proposition}\cite{BJ},\cite{vistoli},\cite{barnet}
	\label{strict}
	A fibred category over $\mathbf{B}$ with a cleavage defines a pseudo-functor $\mathbf{B}^{op} \rightarrow \mathbf{Cat}$. Conversely from every pseudo-functor on $\mathbf{B}$ we get a fibred category over $\mathbf{B}$ with a cleavage. In-fact there exists a strict 2-equivalence between the 2-categories of pseudo-functors and cloven fibrations. 
\end{proposition}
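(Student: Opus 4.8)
The plan is to establish the correspondence in both directions and then upgrade it to a 2-equivalence, noting that in the split/strict case to which we ultimately confine ourselves the mediating natural isomorphisms collapse to identities. First I would pass from a cloven fibration $P:\mathbf{E}\rightarrow\mathbf{B}$ to a pseudo-functor $\Psi:\mathbf{B}^{op}\rightarrow\mathbf{Cat}$. On objects set $\Psi(I)=\mathbf{E}_I$, the fibre of objects above $I$ with vertical morphisms; on an arrow $u:I\rightarrow J$ take $\Psi(u)=u^{*}:\mathbf{E}_J\rightarrow\mathbf{E}_I$ to be the change-of-base functor determined by the chosen cartesian lifts, exactly as in the discussion preceding Figure~\ref{fig:indexedcat}. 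The universal property of cartesian arrows then furnishes canonical natural isomorphisms $\alpha_I:\mathrm{id}_{\Psi(I)}\cong(\mathrm{id}_I)^{*}$ and $\beta_{u,v}:u^{*}v^{*}\cong(v\circ u)^{*}$, and I would verify the coherence diagrams of Figure~\ref{fig:coherence1} and Figure~\ref{fig:coherence2} by appealing once more to uniqueness in the cartesian lifting property; in the split case these isomorphisms are identities and the check is immediate.

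Second I would go back using the Grothendieck construction. Given $\Psi$, form $\int_{\mathbf{B}}\Psi$ as already defined, with first projection $P:\int_{\mathbf{B}}\Psi\rightarrow\mathbf{B}$. The key step is to exhibit a canonical cleavage: for $u:I\rightarrow J$ and an object $(J,Y)$, the lift is $(u,\mathrm{id}_{u^{*}Y}):(I,u^{*}Y)\rightarrow(J,Y)$, and I would check it is cartesian by solving the factorisation problem component-wise, using the pseudo-functor coherence to patch the second components together. This reproduces the schematic shown after the definition of the Grothendieck construction and shows $P$ is a cloven fibration, which is split precisely when $\Psi$ is strict.

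Third, the round trip: starting from a cloven fibration, building $\Psi$, and reconstructing $\int_{\mathbf{B}}\Psi$ should return a fibration equivalent to the original over $\mathbf{B}$, while conversely the cleavage read off from $\int_{\mathbf{B}}\Psi$ recovers $\Psi$ up to the canonical isomorphisms $\alpha,\beta$. I would promote the two passages to 2-functors by specifying their action on $1$-cells, where fibred (cartesian-preserving) functors correspond to pseudo-natural transformations and cleavage-respecting functors to strict ones, and on $2$-cells, where fibred natural transformations match modifications. Showing that the resulting unit and counit are invertible $2$-cells then yields the claimed strict $2$-equivalence.

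The hard part will be the $2$-categorical bookkeeping in the third step: verifying that the object-level correspondence is natural in the fibration and in the pseudo-functor, so that $1$-cells and $2$-cells are transported compatibly with composition, and that the coherence isomorphisms $\alpha,\beta$ interact correctly with these comparison data. In the split/strict setting all of $\alpha$, $\beta$ and the comparison $2$-cells degenerate to identities, so the equivalence sharpens to an isomorphism of $2$-categories and the obstruction largely evaporates; the remaining coherence checks are exactly those already recorded in Figures~\ref{fig:coherence1} and~\ref{fig:coherence2}.
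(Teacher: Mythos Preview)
Your outline is correct and follows the standard route that the paper itself is invoking. The one point worth flagging is a difference of scope rather than of method: the paper does not attempt the full statement. It explicitly restricts to the strict case, proves in detail only the passage from a strict functor $\Psi:\mathbf{B}^{op}\rightarrow\mathbf{Cat}$ to a split fibration by directly verifying that the arrows $(u,\mathrm{id}):(I,\Psi(u)(Y_1))\rightarrow(J,Y_1)$ are cartesian via the factorisation in Figure~\ref{fig:strictfib}, and then merely sketches the reverse direction in a sentence, deferring the pseudo-functor case and the $2$-equivalence entirely to the cited references. Your plan, by contrast, treats both directions symmetrically, keeps the pseudo-functor coherence data explicit, and addresses the $2$-categorical round trip; this is more complete and closer to what \cite{BJ} and \cite{vistoli} actually do, at the cost of the bookkeeping you yourself identify as the hard part. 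For the purposes of this paper the narrower argument suffices, but nothing in your approach is wrong.
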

\begin{proof}
	We shall not prove the proposition for the general case of a pseudo-functor but only for a strict functor since we need mostly strict functors for the work in this sequel. For this let $\Psi: \mathbf{B}^{op} \rightarrow \mathbf{Cat}$ be the strict. Now let as earlier, $I,J,K,..$ be the objects and $u,v,w,..$ be the arrows of the small category $\mathbf{B}$. Now the usual contra-variant functor diagram is given as 
	
	\begin{equation}
	\xymatrix{
		K \ar[r]^{w} \ar[dr]_{u \circ w} & I \ar[d]^{u} \\
		& J 
	} 
	\quad \\
	\xrightarrow{\Psi}
	\quad
	\xymatrix{
		\Psi(K)   & \Psi(I) \ar[l]_{\Psi(w)}  \\
		& \Psi(J) \ar[u]_{\Psi(u)} \ar[ul]^{\Psi(u \circ w)}
	}	 
	\end{equation} 
	
	Now to define a split fibration choose an $\mathbf{E}(I)$ with its second component identical (or unique up-to isomorphism) to $\Psi(I)$ and define the objects in $\mathbf{E}$ to be the pairs $(I,X_1),(I,X_2),...$ where $X_1,X_2,...$ are objects of $\Psi(I)$ and so on for all $J,K,..$. Now each functor $\Psi(u)$ assigns a well-defined unique arrow $\Psi(u)(Y_1)$ to each object $Y_1$ of $\Psi(J)$. The claim is that these arrow will act as the cartesian lifts for the fibration and therefore one can define the arrows of this category as $(u,f):(I,X_1) \rightarrow (J,Y_1)$ where $f:X_1 \rightarrow \Psi(u)(Y_1)$. To verify this claim observe Figure~\ref{fig:strictfib}.
	\begin{figure}[!h]	
		\begin{equation*}
		\xymatrix{ & (K,Z_1) \ar@{.>}[ld]_{g'} \ar@{.>}[dd]_{h} \ar[rddr]^{g} & \\  
			(I,\Psi(u \circ w)(Y_1)) \ar[rd]_{\bar{w}} \ar[rdrr]^{\overline{u \circ w}} & & \\
			& (I,\Psi(u)(Y_1)) \ar[rr]_{\bar{u}} & & (J,Y_1) \\
		}
		\end{equation*}
		\caption{Proof for the cartesian lifts in fibration formed using a strict functor $\Psi$}
		\label{fig:strictfib}
	\end{figure}
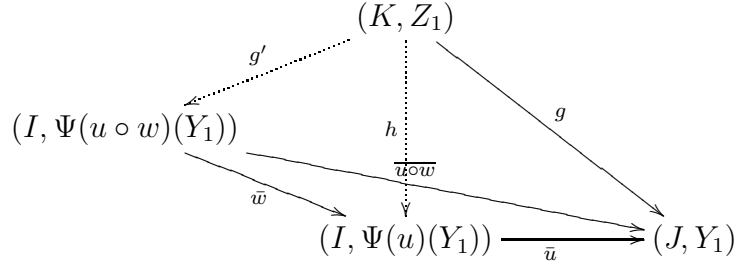

	For every arrow $g$ to $(J,Y_1)$ we have to show that there exists a unique $h$ above $w$ as per the classic cartesian lift definition as shown in the Figure~\ref{fig:strictfib2}. Now for every $g$ from some $(K,Z_1)$ to $(J,Y_1)$ there is a unique $g':Z_1 \rightarrow \Psi(u \circ w)(Y_1)$ in $\Psi(K)$ which follows from the definition of arrows in $E$ and the fact that $g$ is above $u \circ w$. since the functor $\Psi(w)$ assigns a well-defined object $\Psi(w)\Psi(u)(Y_1)$ to the object $\Psi(u)(Y_1)$ and since $\Psi$ is a true functor which implies $\Psi(w \circ u)= \Psi(w)\Psi(u)$ by its definition, therefore the arrow $\bar{w}$ is precisely same as $\Psi(w)\Psi(u)(Y_1)$. Given a unique $g'$, we have a unique $h=(w,g')$ which is the required factorization of $g$ through $\bar{u}$ proving the claim. The classic cartesian square for this case is shown below.
	\begin{figure}[!h]		
		\begin{equation*}
		\xymatrix{ & \\
			\mathbf{E} \ar[dd]_{P} \\ 
			& \\
			\mathbf{B}}
		\qquad
		\xymatrix{ (K,Z_1) \ar@{|->}[dd] \ar@{.>}[rd]_{h} \ar[rdrr]^{g} & & \\
			& (I,\Psi(u)(Y_1)) \ar@{|->}[dd] \ar[rr]_{\bar{u}} & & (J,Y_1) \ar@{|->}[dd] \\
			K \ar[rd]_{w} \ar[rdrr]^{u \circ w =Pg}  & & \\
			& I \ar[rr]_{u} & & J }
		\end{equation*}
		\caption{Classic fibration definition in the case of strict functor $\Psi$}
		\label{fig:strictfib2}
	\end{figure}
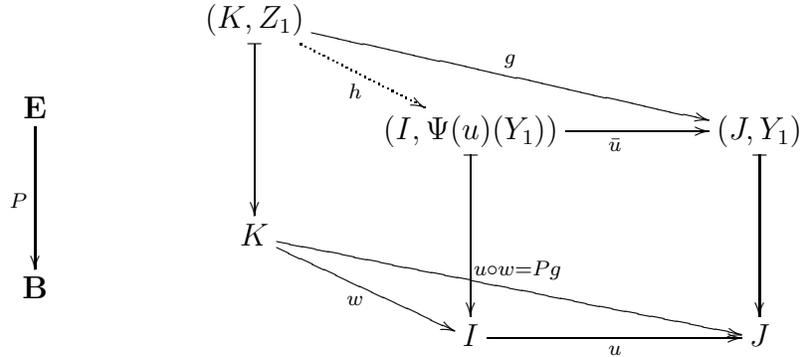
	For the reverse case we only provide a few hints and leave the relatively easy proof for the reader. For the reverse case given a split fibration the reader may observe that the cartesian lifts of a given arrow in the base for objects in the fibers sets up a pullback functor in the reverse direction. The fibers along with these pullback functors when worked out will give rise to an associated strict contravariant functor from $\mathbf{B}$ to $\mathbf{Cat}$.      
\end{proof}

For the proof in the complex case of a pseudo-functor the reader might wish to consult \cite{BJ},\cite{vistoli},\cite{barnet}.  

The definition of fibration could be interpreted intuitively in that it makes some part of the structure (that which is specified by the category $\mathbf{B}$) within $\mathbf{E}$ explicit. This will become obvious after we have explored the examples and connections with usual functor characterized by the base structured categories.

\subsection{An Example: $P: \mathbf{Vect} \rightarrow \mathbf{Fld}$}
In this section we shall understand both intuitively and rigorously that the category of all vector spaces over arbitrary fields with linear transformations is truly a fibred category with its base as the category of all fields and field homomorphisms. This example is well-known in the category theory literature. 

Since a field acts on an Abelian group; we can define a functor from $\mathbf{Fld}$ to $\mathbf{Cat}$ which maps each object $K$ to a full subcategory of $\mathbf{Ab}$ (considered as an object of $\mathbf{Cat}$) consisting of all those Abelian groups on which $K$ can act. This functor then sends field homomorphisms to functors on these subcategories. Since the action of a field on Abelian group can be defined both as a left as well as right action we can define the above functor in a contravariant fashion giving us $\mathbf{Fld}$-indexed category which is a (strict) functor $\Psi:\mathbf{Fld}^{op} \rightarrow \mathbf{Cat}$. It maps each object $K \in \mathbf{Fld}$ to the subcategory $K_{\mathbf{Ab}}$ consisting of Abelian groups on which $K$ acts.  Hence $\Psi(K)=K_{\mathbf{Ab}}$ and a morphism $u:K \rightarrow L$ is mapped to a functor $\Psi(u):L_{\mathbf{Ab}} \rightarrow K_{\mathbf{Ab}}$. 

\begin{equation*}
\xymatrix{
	u^*(A'') \ar@/_1pc/[dddd]_{u^*(g \circ f)} \ar[dd]^{u^*(f)} \ar@{.>}[rr]^{\bar{u}(A'')} &  & A'' \ar@/^1pc/[dddd]^{g \circ f} \ar[dd]_{f} \\
	& & \\
	u^*(A') \ar[dd]^{u^*(g)} \ar@{.>}[rr]^{\bar{u}(A')} &  & A' \ar[dd]_{g}  \\
	& & \\
	u^*(A) \ar@{.>}[rr]^{\bar{u}(A)} &  & A \\
	K \ar@(dl,ul)[]|{e_K} \ar[rr]_{u} &  & L \ar@(dr,ur)[]|{e_L}
}
\end{equation*} 
Since functor $\Psi$ is a strict functor it doesn't involve natural isomorphisms and is therefore split fibration example. Now Grothendieck construction $\int_{\mathbf{Fld}}\Psi$ is the total category with,
\begin{itemize}
	\item \textbf{objects} $(K,A)$ where $K \in \mathbf{Fld}$ and $A \in \Psi(K).$
	\item \textbf{morphisms} $(K,A) \rightarrow (L,A')$ are the pairs $(u,f)$ with $u:K \rightarrow L$ in $\mathbf{Fld}$ and $f:A \rightarrow u^*(A')=\Psi(u)(A')$ in $\Psi(K).$
	\item \textbf{identity} $(K,A) \rightarrow (K,A)$ is pair $(id_K,id_A)$, since $id_{\Psi(K)} = (id_K)^*$.
	\item \textbf{composition}
	$\xymatrix{
		(K,A) \ar[r]^{(u,f)} & (L,A') \ar[r]^{(v,g)} & (F,A'') 
	}$
\end{itemize}

The pairs $(K,A)$ are precisely the $K$-vector spaces with $A$, the underlying Abelian group on which the field acts as $\cdot :K \times A \rightarrow A$. The morphisms $(u,f)$ with $u$ field homomorphism and $f:A \rightarrow A'$ is an (Abelian) group homomorphism such that $f(a \cdot x)=u(a) \cdot f(x) \forall a \in K, x \in A$. Thus the fiber on $K$ is precisely the subcategory of $\mathbf{Vect}$ commonly denoted as $\mathbf{Vect}_K$ consisting of all $K$-vector spaces.   The standard diagram of fibration looks something as shown in Figure~\ref{fig:vect} where $P$ is the forgetful functor sending every vector space to its underlying field.
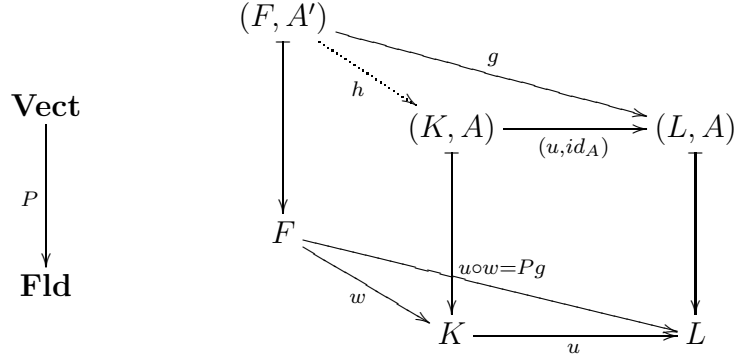
\begin{figure}
	\begin{equation*}
	\xymatrix{ & \\
		\mathbf{Vect} \ar[dd]_{P} \\ 
		& \\
		\mathbf{Fld}}
	\qquad
	\xymatrix{ (F,A') \ar@{|->}[dd] \ar@{.>}[rd]_{h} \ar[rdrr]^{g} & & \\
		& (K,A) \ar@{|->}[dd] \ar[rr]_{(u,id_A)} & & (L,A) \ar@{|->}[dd] \\
		F \ar[rd]_{w} \ar[rdrr]^{u \circ w =Pg}  & & \\
		& K \ar[rr]_{u} & & L }
	\end{equation*}
	\caption{Category of vector spaces as a fibration}
	\label{fig:vect}
\end{figure}

\subsection{Connection of $(F,\mathbf{C},\mathbf{Cat})$ to fibred category}
\label{subsec:con}
First we illustrate the precise connection between a strict functor into $\mathbf{Cat}$ to the fibred category. Consider again a general functor $F: \mathbf{C} \rightarrow \mathbf{Cat}$. As seen earlier this produces a category $(F,\mathbf{C},\mathbf{Cat})$ as shown in the Figure~\ref{fig:cat} where the image subcategory is a 2-category with categories as objects and functors as arrows.

\begin{figure}
	\begin{tikzcd}[row sep=large, column sep=30ex]
		(X,FX) \arrow[loop left]{l}{\mathrm{id}_{(X,FX)}} \arrow{dr}{(g\circ f,Fg\cdot Ff)} \arrow{r}{(f,Ff)} \arrow[swap]{d}{(h\circ g\circ f,Fh\cdot Fg\cdot Ff)} & (Y,FY) \arrow{d}{(g,Fg)} \\
		(W,FW) & (Z,FZ) \arrow{l}{(h,Fh)}
	\end{tikzcd}
	\caption{Category $(F,\mathbf{C},\mathbf{Cat})$ characterizing $F$}
	\label{fig:cat}
\end{figure}
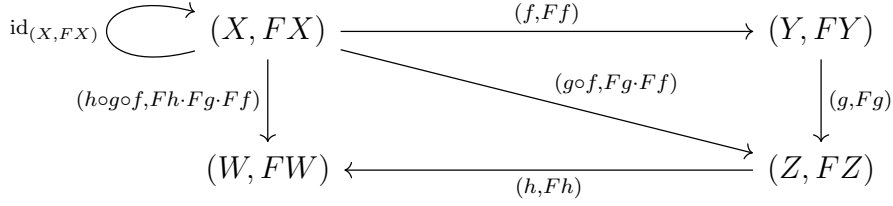

since $FX,FY,...$ are all categories and $Ff,Fg,...$ are functors it is possible to expand the second component of node and form new objects and arrows correspondingly as shown in the Figure~\ref{fig:expand}. Observe that this structurally corresponds to a fiber of a total category in Grothendieck fibration. 
\begin{figure}
	\begin{tikzcd}[row sep=large, column sep=30ex]
		(X,A1) \arrow[loop left]{l}{\mathrm{id}_{(X,A1)}} \arrow{dr}{(\mathrm{id}_X,a2\cdot a1)} \arrow{r}{(\mathrm{id}_X,a1)} & (X,A2) \arrow{d}{(\mathrm{id}_X,a2)} \\
		& (X,A3) 
	\end{tikzcd}
	\caption{Expansion of the node $(X,FX)$}
	\label{fig:expand}
\end{figure}
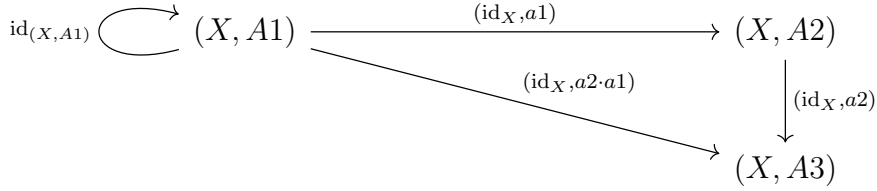

Next we repeat the above expansion procedure for the second component of every node in $(F,\mathbf{C},\mathbf{Cat})$  to get a full blown structure as shown in Figure~\ref{fig:blown} where we only two nodes corresponding to objects $X$ and $Y$ are demonstrated. Here the horizontal arrows are the restrictions of functor $(f,Ff)$ to objects.
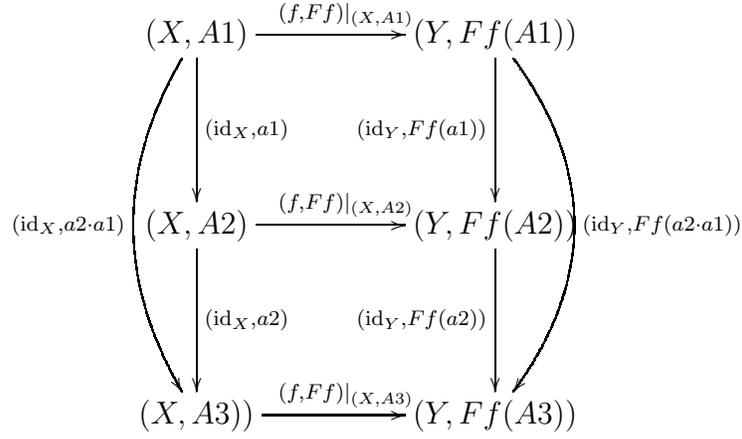
\begin{figure}
	\begin{equation*}
	\xymatrix{
		(X,A1) \ar@/_2pc/[dddd]_{(\mathrm{id}_X,a2\cdot a1)} \ar[dd]^{(\mathrm{id}_X,a1)} \ar@{>}[rr]^{(f,Ff)|_{(X,A1)}} &  & (Y,Ff(A1)) \ar@/^2.5pc/[dddd]^{(\mathrm{id}_Y,Ff(a2\cdot a1))} \ar[dd]_{(\mathrm{id}_Y,Ff(a1))} \\
		& & \\
		(X,A2) \ar[dd]^{(\mathrm{id}_X,a2)} \ar@{>}[rr]^{(f,Ff)|_{(X,A2)}} &  & (Y,Ff(A2)) \ar[dd]_{(\mathrm{id}_Y,Ff(a2))}  \\
		& & \\
		(X,A3)) \ar@{>}[rr]^{(f,Ff)|_{(X,A3)}} &  & (Y,Ff(A3))
	}
	\end{equation*}
	\caption{Schematic of the category with $(X,FX)$,$(Y,FY)$ expanded.}
	\label{fig:blown}
\end{figure}

Now using the Proposition~\ref{strict} and its proof we can convert the associated strict functor $\bar{F}: \mathbf{C}^{op} \rightarrow \mathbf{Cat}$ into an equivalent split fibration as shown in the Figure~\ref{fig:fib} where we have used Lemma~\ref{contra} and the fact $\bar{F}X = FX$. Recall that we denote opposite arrow of $f$ by $f^{\circ}$. 
\begin{figure}
	\begin{equation*}
	\xymatrix{
		(X,A1) \ar@/_2pc/[dddd]_{(\mathrm{id}_X,a2\cdot a1)} \ar[dd]^{(\mathrm{id}_X,a1)} \ar@{.>}[rr]^{\overline{f}(Y,Ff(A1))} &  & (Y,Ff(A1)) \ar@/^2.5pc/[dddd]^{(\mathrm{id}_Y,Ff(a2\cdot a1))} \ar[dd]_{(\mathrm{id}_Y,Ff(a1))} \\
		& & \\
		(X,A2) \ar[dd]^{(\mathrm{id}_X,a2)} \ar@{.>}[rr]^{\overline{f}(Y,Ff(A2))} &  & (Y,Ff(A2)) \ar[dd]_{(\mathrm{id}_Y,Ff(a1))}  \\
		& & \\
		(X,A3)) \ar@{.>}[rr]^{\overline{f}(Y,Ff(A3))} &  & (Y,Ff(A3)) \\
		X \ar@(dl,ul)[]|{\mathrm{id}_X} \ar[rr]_{f} &  & Y \ar@(dr,ur)[]|{\mathrm{id}_Y} 
	}
	\end{equation*}
	\caption{$\int_{\mathbf{C}}\bar{F}$ fibred on $\mathbf{C}$}
	\label{fig:fib}
\end{figure}
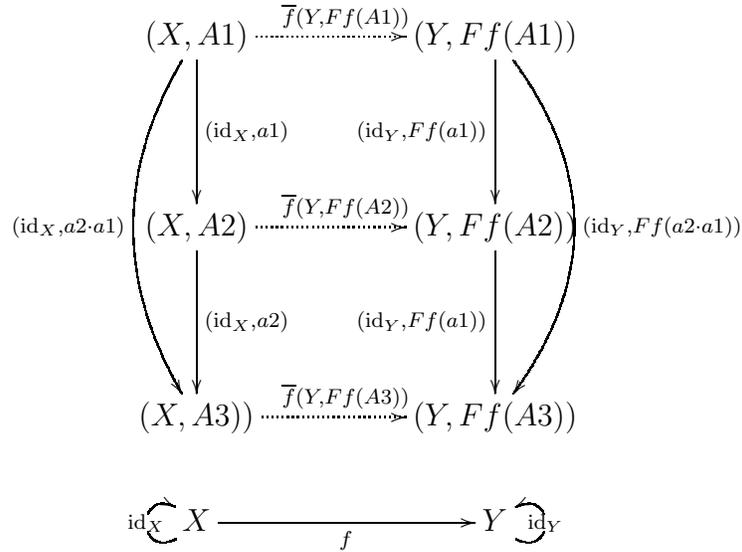

Hence we have demonstrated that since $\bar{F}$ is a strict functor,$(F,\mathbf{C},\mathbf{Cat})$ can be uniquely (up-to isomorphism) converted into a fibred category on $\mathbf{C}$ with split cleavage. The following three categories are essentially the same.

\begin{equation}
(F,\mathbf{C},\mathbf{Cat}) \cong  \int_{\mathbf{C}}\bar{F} \cong \mathcal{X} \rtimes_{F} \mathbf{C}
\end{equation}

\end{document}